\newtheorem{remark}{Remark}
\newtheorem{corollary}{Corollary}
\newtheorem{lemma}{Lemma}
\newtheorem{theorem}{Theorem}
\newtheorem{definition}{Definition}
\newcommand{\set}[1]{\left\{#1\right\}}
\begin{document}

\title{\bf Experimental Design : Optimizing Quantities of Interest to Reliably Reduce the Uncertainty in Model Input Parameters}
\author{Scott Walsh\\{Advisor: Troy Butler}}
\date{}
\maketitle
\tableofcontents



\newcommand\matlab{{\sc matlab}}
\newcommand{\goto}{\rightarrow}
\newcommand{\bigo}{{\mathcal O}}
\newcommand{\half}{\frac{1}{2}}
\newcommand\reals{{{\rm l} \kern -.15em {\rm R} }}
\newcommand\complex{{\raisebox{.043ex}{\rule{0.07em}{1.56ex}} \hskip -.35em {\rm C}}}


\newenvironment{mat}{\left[\begin{array}{ccccccccccccccc}}{\end{array}\right]}
\newcommand\bcm{\begin{mat}}
\newcommand\ecm{\end{mat}}

\newenvironment{rmat}{\left[\begin{array}{rrrrrrrrrrrrr}}{\end{array}\right]}
\newcommand\brm{\begin{rmat}}
\newcommand\erm{\end{rmat}}

\newenvironment{choices}{\left\{ \begin{array}{ll}}{\end{array}\right.}
\newcommand\when{&\text{if~}}
\newcommand\otherwise{&\text{otherwise}}

\newcommand{\eql}{\begin{equation}\label}
\newcommand{\eqn}[1]{(\ref{#1})}

\newcommand\unp{U^{n+1}}
\newcommand\unm{U^{n-1}}

\newcommand{\react}[1]{\stackrel{K_{#1}}{\rightarrow}}
\newcommand{\reactb}[2]{\stackrel{K_{#1}}{~\stackrel{\rightleftharpoons}
   {\scriptstyle K_{#2}}}~}


\renewcommand{\theenumi}{\alph{enumi}}
\renewcommand{\labelenumi}{(\theenumi)}
\newcommand{\supp}{\mathop{\mathrm{supp}}}

\pagebreak

\vskip 10pt
\section{\bf Introduction}
\vskip 10pt

With an abundance of computing resources available today mathematical models are capable of simulating extraordinarily complex physical systems.  These complex models in turn induce a set of model input parameters, many of which are subject to uncertainty.  For example, the diffusion coefficient of a model simulating the heating of a thin plate by some external source is subject to uncertainty, possibly due to the manufacturing imperfections of the thin plate.  The Manning's n parameters of a coastal storm surge model may initially be determined by extensive exploration of the coastal boundary, however each passing storm has the potential to significantly alter regions of this coastal boundary introducing uncertainty into these parameters.  Our goal is to reduce the uncertainty in these model input parameters, therefore improving the predictive capabilities of the model.

One approach to reducing the uncertainty in these model input parameters is to gather quantities of interest (QoI) of the physical system and use this data to inform us about the parameters that may have produced this data.  This data may be a temperature measurement at a point in space time or a maximum water elevation at a given location in the physical domain.  As our model input parameters are uncertain, so is the data we gather.  The gathered data is uncertain for a multitude of reasons, e.g., measurement instruments have finite precision, instrument locations in space time are subject to uncertainty.  This uncertainty in each QoI gathered may also be a function of the region of the parameter space that produced this physical reality.  Different observed solutions (corresponding to different regions of model input parameters) may produce physical processes that are not being modeled that pollute measurements even more.  For example, some regions of a parameter space of a coastal storm surge model may produce large and chaotic waves that pollute measurements.

This uncertainty in the QoI produces a set valued inverse problem.  In \cite{BE1, BE3} the foundation for a measure theoretic approach to solving this set valued inverse problem is carefully developed.  With this foundation built on well understood mathematical concepts, the uniqueness of such set valued inverse solutions is proved, under suitable conditions.  However, each solution is dependent on the data used in the inverse problem, i.e., different sets of data produce different inverse solutions whose geometric properties may vary substantially.  In this paper we develop an approach for determining an optimal choice of QoI to produce a {\em precise} and {\em accurate} inverse solution, i.e., reduce the uncertainty in our model parameters.

\vskip 10pt

In Section \ref{Sec:Three_Levels} we establish the foundation of the measure theoretic framework for solving the inverse problem.  In Section \ref{Sec:Numerical_Approximation} we consider the numerical algorithm that simultaneously approximates the support of the probability density solving the stochastic inverse problem.  We use simple linear maps to demonstrate the impact the geometry of the inverse density has on the numerical approximation of this support.  In Section \ref{Sec:Precision_Accuracy} we quantify the two guiding geometric properties of the support, the {\em $\mu_\Lambda$-measure} and the {\it skewness}, which represent, in a sense, the precision and accuracy, respectively, of the computed inverse density.  In Section \ref{Sec:Optimizing} we address the need to optimize the minimization of both the $\mu_\Lambda$-measure and skewness.  With this multi-criteria optimization problem defined, we use simple linear and nonlinear maps to develop an intuition about solutions to this problem.  In Section \ref{Sec:Map_Defined}, we demonstrate the impact of determining optimal sets of QoI on solutions to stochastic inverse problems involving physics based models.  
In Section \ref{Sec:Conclusion} we provide concluding remarks and discuss promising future research directions.

\section{\bf A Measure-Theoretic Stochastic Inverse Problem}\label{Sec:Three_Levels}

\subsection{A Mathematical Formulation}

We provide a taxonomy of three forward problems with increasing levels of uncertainty and the direct inverses of these problems along with a brief summary of the corresponding solutions.
For a more thorough description and analysis of these problems and solutions, see \cite{BE1, BE3}. 
Let $\Lambda$ denote the set of all uncertain input parameters for some physics-based model and $\mathcal{D}$ denote the set of possible output data defined as the range of the QoI map, $Q:\Lambda\to\mathcal{D}$, which we assume is piecewise smooth.

\vskip 5pt
{\it\noindent Level 1}

The first forward problem is the simplest in predictive science which is to evaluate the map $Q$ for a fixed $\lambda\in\Lambda$ to determine the corresponding output datum $Q(\lambda)=q\in\mathcal{D}$. 
In other words, once the inputs of a model are specified, the simplest forward problem is to solve the model in order to predict the output datum. 
Generally, solutions to this problem require developing a computational model to determine approximate solutions to the physics-based model and to then apply a functional to the numerical solution to obtain the value of $q$. 
With the exception of possible discretization errors in numerically evaluating $Q(\lambda)$, there is no uncertainty in the forward problem since $Q(\lambda)$ is well-defined. 

The corresponding inverse problem is to determine the possible parameters $\lambda\in\Lambda$ that produce a particular value of $q\in\mathcal{D}$.  
Oftentimes $Q^{-1}(q)$ defines a set of values in $\Lambda$ either due to non-linearities in the map $Q$ and/or the dimension of $\Lambda$ being greater than the dimension of $\mathcal{D}$. 
Thus, the simplest inverse problem often has uncertainty as to the particular value of $\lambda\in\Lambda$ that produced a fixed output datum. 
However, there is no uncertainty about which set-valued inverse produced the fixed output datum.
When $\dim(\Lambda)=2$ and $\dim(\mathcal{D})=1$, a set-valued inverse, $Q^{-1}(q)$, defines a contour in $\Lambda$ that we are familiar with from contour maps, see the left plot in Figure~\ref{fig:contour_illustration}. 
We restrict focus to problems where $\Lambda\subset\reals^n$, $\mathcal{D}\subset\reals^m$, and  $m<n$ (or occasionally $m\leq n$), which corresponds to the common case when there are more input parameters than observable outputs.
We refer to the set-valued inverses of the map $Q$ as {\em generalized contours}.
Generalized contours may be defined by the union of piecewise-defined manifolds in $\Lambda$, and the local dimension of the manifolds depends upon $m$, $n$, and the rank of the Jacobian of $Q$.
\begin{definition}
The component maps of the $m$-dimensional piecewise-smooth vector-valued map $Q(\lambda)$ are {\bf geometrically distinct (GD)} if the Jacobian of $Q$ has full rank at every point in $\Lambda$.
When the component maps are GD, we say that $Q$ is GD. 
\end{definition}
When $m<n$ and $Q$ is GD, the generalized contours exist as piecewise-defined $(n-m)$-dimensional manifolds in $\Lambda$.
In \cite{BE1}, a method for explicit pointwise approximation of generalized contours for scalar multivariate $Q$ maps is provided.
In \cite{BE3}, this was extended for the more general case of vector-valued multivariate $Q$ maps. 
While explicit approximations are required for numerical solutions to this ``simplest'' inverse problem, it is thankfully not necessary for solutions to the stochastic inverse problem which implicitly exploit the geometric structure of the generalized contour map. 

\vskip 5pt
{\noindent\it Level 2}

Following solutions to the first forward problem, it is typical to define a second forward problem from the general class of problems that fall under the category of deterministic sensitivity analysis.  
Some specific deterministic sensitivity analysis problems arise in the context of dynamical systems analysis, where we study how sets of initial conditions vary over time, or in perturbation analysis, where we study how small changes in model inputs affect model outputs.
These types of forward problems can often be written mathematically as analyzing $Q(A)\subset\mathcal{D}$ given a set $A\subset\Lambda$. 
In other words, there is generally uncertainty as to the precise value $\lambda$ takes in a set $A\subset\Lambda$ and subsequently there is uncertainty as to the particular datum to predict in the set $Q(A)$. 
Answering such questions often requires, at a minimum, specification of metrics on both $\Lambda$ and $\mathcal{D}$ so that we can describe distances between points.
Thus, in the formulation of a forward {\em deterministic} sensitivity analysis problem, we often must specify metrics on both $\Lambda$ and $\mathcal{D}$. 
By transfinite induction using sets in the metric topologies, we can construct the Borel $\sigma$-algebras, $\mathcal{B}_\Lambda$ and $\mathcal{B}_{\mathcal{D}}$, on $\Lambda$ and $\mathcal{D}$, respectively.
The metrics can be used to define outer measures on the measurable spaces $(\Lambda,\mathcal{B}_\Lambda)$ and $(\mathcal{D},\mathcal{B}_{\mathcal{D}})$.
Then, Carath\'{e}odory's theorem can be used to define the Hausdorff measures, which we refer to as the ``volume'' measures on $(\Lambda,\mathcal{B}_\Lambda)$ and $(\mathcal{D},\mathcal{B}_{\mathcal{D}})$, and denote by $\mu_\Lambda$ and $\mu_{\mathcal{D}}$, respectively. 
Note that if the metrics are norm-induced on $\Lambda\subset\mathbb{R}^n$ and $\mathcal{D}\subset\mathbb{R}^m$, then up to a scaling constant, $\mu_\Lambda$ and $\mu_{\mathcal{D}}$ are identical to the standard Lebesgue measures on these spaces. 
We observe that construction of the measure spaces $(\Lambda,\mathcal{B}_\Lambda,\mu_\Lambda)$ and $(\mathcal{D},\mathcal{B}_{\mathcal{D}},\mu_{\mathcal{D}})$ follows directly from the specification of metrics on the sets $\Lambda$ and $\mathcal{D}$. 
Therefore, we refer to the specification of metrics as the {\em minimal assumptions} required to obtain the measure-theoretic framework in which the remaining problems in the taxonomy are formulated. 

It is worth noting that a metric need not be specified on $\mathcal{D}$ as long as $\mathcal{D}$ is a topological space.
In this case, we can still obtain a Borel $\sigma$-algebra on $\mathcal{D}$ as before, and define the volume measure $\mu_{\mathcal{D}}$ on $(\mathcal{D},\mathcal{B}_{\mathcal{D}})$ in terms of the induced ``push forward'' measure \begin{equation}
    \mu_{\mathcal{D}}(A) = \mu_{\Lambda}(Q^{-1}(A)), \ A\in\mathcal{B}_{\mathcal{D}},
\end{equation} 
where the measurability of $Q$, which follows from the assumed smoothness properties of $Q$, implies $Q^{-1}(A)\in\mathcal{B}_\Lambda$. 

The corresponding {\em deterministic} inverse sensitivity analysis problem is to analyze $Q^{-1}(A)$ for some $A\in\mathcal{B}_{\mathcal{D}}$.  
While $Q^{-1}(A)\in\mathcal{B}_\Lambda$, the practical computation of $Q^{-1}(A)$ is complicated by the fact that $Q^{-1}$ does not map individual points in $\mathcal{D}$ to individual points in $\Lambda$.
As described above, assuming $m<n$, $Q^{-1}$ maps a point in $\mathcal{D}$ to an $(n-m)$-dimensional manifold embedded in $\Lambda$.
Thus, $Q^{-1}(A)$ defines a {\em generalized contour event}, or just contour event, belonging to an {\em induced contour $\sigma$-algebra} $\mathcal{C}_\Lambda\subset\mathcal{B}_\Lambda$ (where the inclusion is often proper). 
In other words, solutions to the corresponding inverse problem can be described in the measurable space $(\Lambda,\mathcal{C}_\Lambda)$ and the volume measure $\mu_\Lambda$ can be used to provide quantitative assessments of solutions to this inverse sensitivity analysis problem. 

As shown below in describing solutions to the stochastic inverse problem, it is useful to consider solutions to this deterministic inverse sensitivity problem in a space of equivalence classes.
Specifically, we may use the generalized contours to define an equivalence class representation of $\Lambda$ where two points are considered equivalent if they lie on the same generalized contour.  
We let $\mathcal{L}$ denote the space of such equivalence classes and let $\pi_{\mathcal{L}}:\Lambda\to\mathcal{L}$ denote the projection map where $\pi_{\mathcal{L}}(\lambda)=\ell\in\mathcal{L}$ defines the equivalence class corresponding to a particular $\lambda$ and $\pi_{\mathcal{L}}^{-1}(\ell)=C_\ell$ is the generalized contour in $\Lambda$ corresponding to the point $\ell\in\mathcal{L}$.
It is possible to explicitly represent $\mathcal{L}$ in $\Lambda$ by choosing a specific representative element from each equivalence class.
As described in \cite{BE1, BE3}, such a representation of $\mathcal{L}$ can be constructed by piecewise $m$-dimensional manifolds that {\em index} the $(n-m)$-dimensional generalized contours.
From a non-technical perspective, this is like defining a particular hiking path using a contour map where each elevation is transversed once and only once, see the middle plot in Figure~\ref{fig:contour_illustration}. 
We refer to any such indexing manifold as a {\em transverse parameterization}.
Given a particular indexing manifold representing $\mathcal{L}$, $Q$ defines a bijection between $\mathcal{L}$ and $\mathcal{D}$.
The measure space $(\mathcal{L},\mathcal{B}_{\mathcal{L}},\mu_{\mathcal{L}})$ can be defined as an induced space using the bijection $Q$ and $(\mathcal{D},\mathcal{B}_{\mathcal{D}},\mu_{\mathcal{D}})$. 
Then, solutions to the deterministic inverse sensitivity analysis problem can be described and analyzed as measurable sets of points in $\mathcal{L}$ instead of measurable generalized contour events in $\Lambda$. 
While the measure space $(\mathcal{L},\mathcal{B}_{\mathcal{L}},\mu_{\mathcal{L}})$ is useful in describing both the theoretical solutions and computational algorithms approximating solutions to the stochastic inverse problem defined below, it is not necessary to explicitly construct a transverse parameterization. 
\begin{figure}[htbp]
  \centering
 \hbox{ \includegraphics[width=0.33\textwidth]{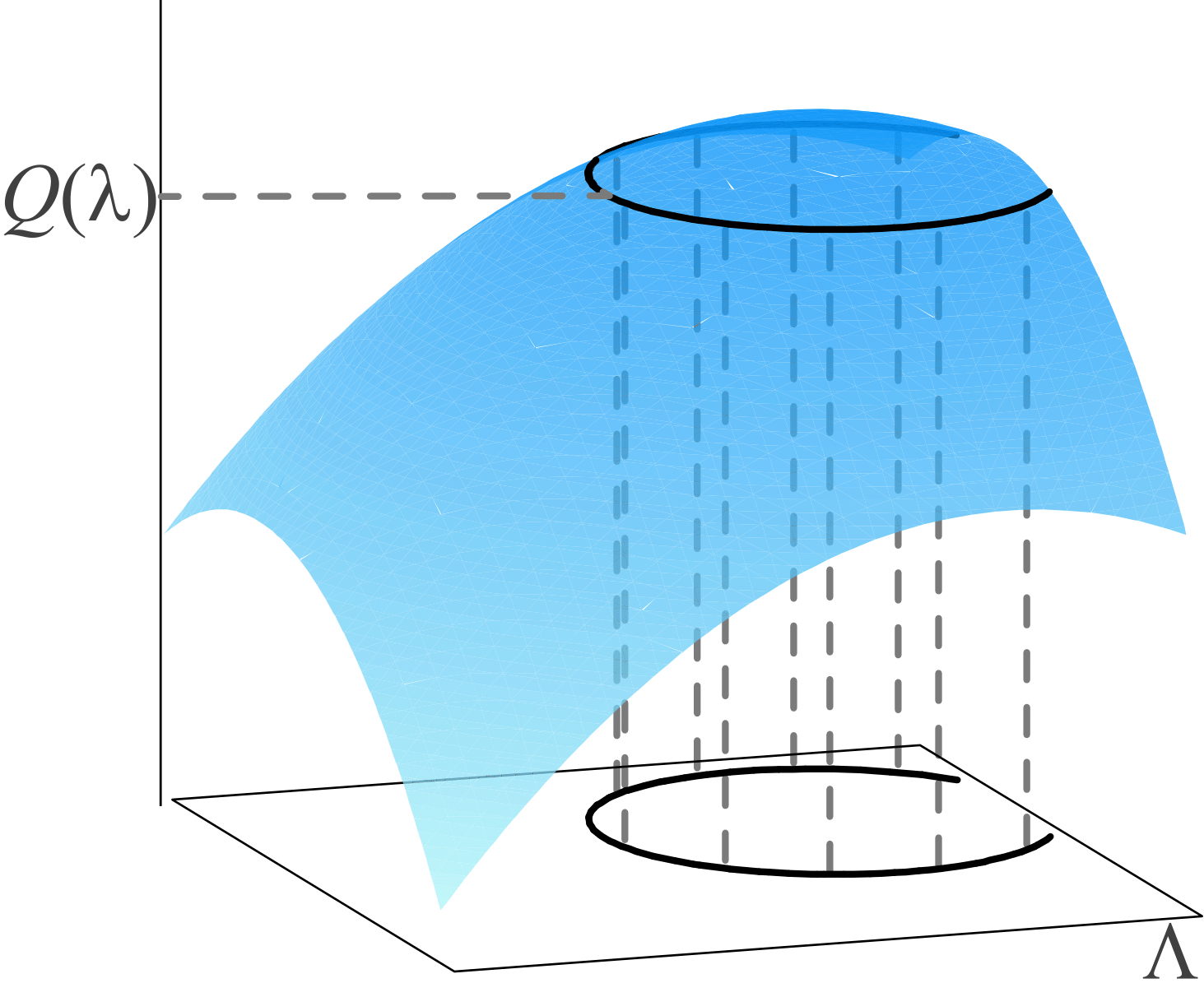}
  \includegraphics[width=0.33\textwidth]{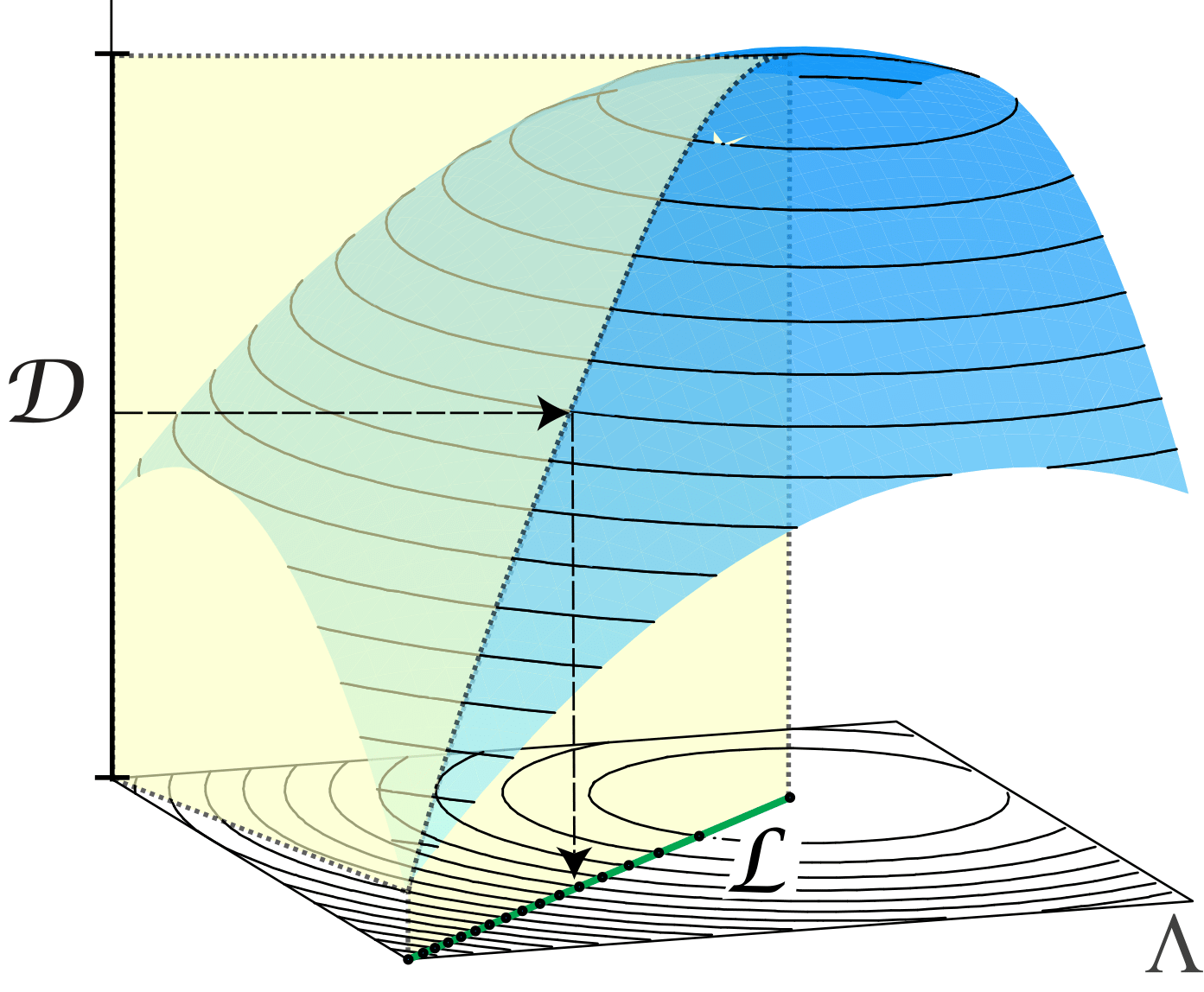}
  \includegraphics[width=0.33\textwidth]{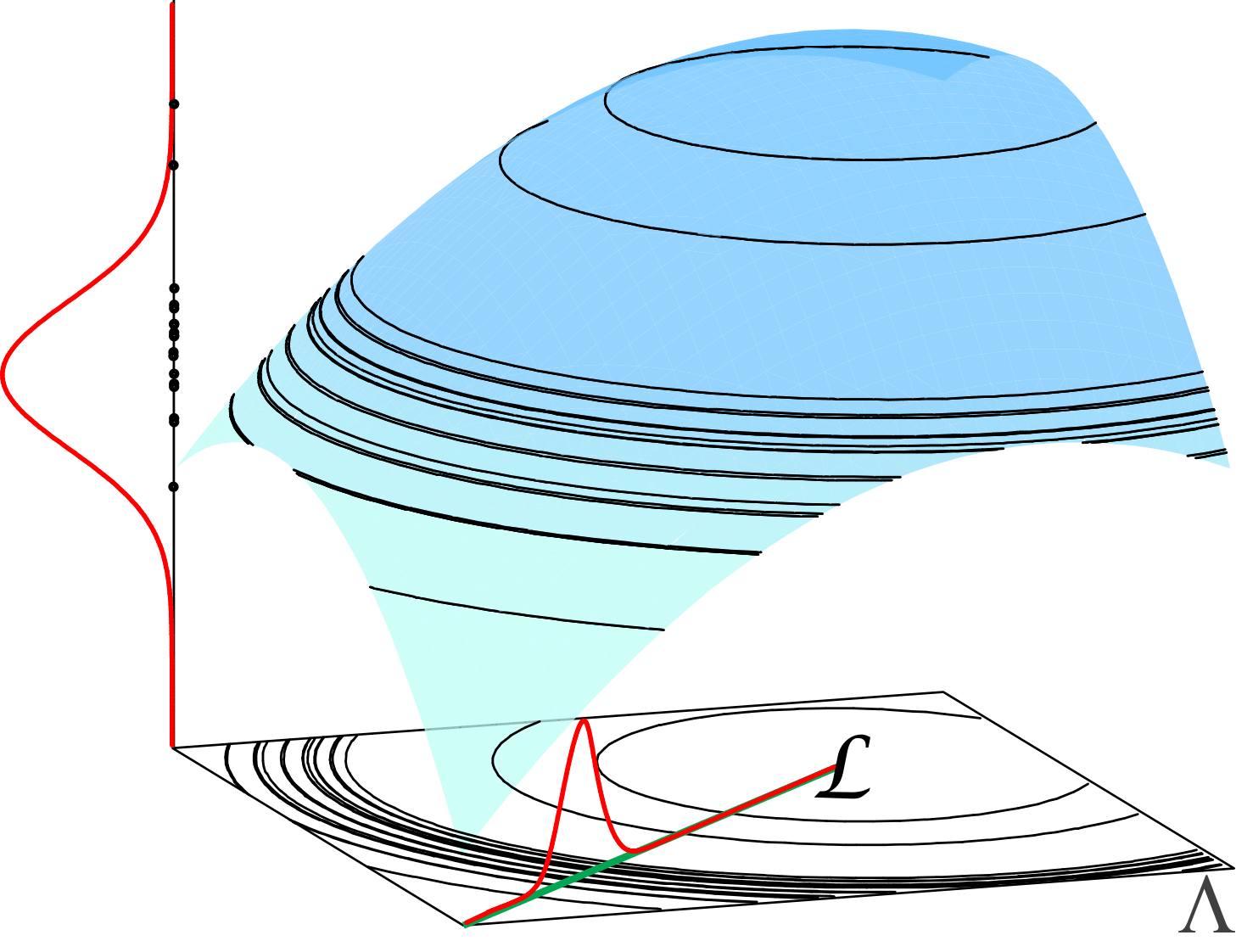}}
  \caption{\it Left: The inverse of $Q^{-1}(Q(\lambda))$ is often set-valued even when $\lambda\in\Lambda$ specifies a particular $Q(\lambda)\in\mathcal{D}$. Middle: The representation of $\mathcal{L}$ as a transverse parameterization. Right: A probability measure described as a density on $(\mathcal{D},\mathcal{B}_{\mathcal{D}})$ maps uniquely to a probability density on $(\mathcal{L},\mathcal{B}_{\mathcal{L}})$. Figures adopted from \cite{BE3} and \cite{BE1}.}\label{fig:contour_illustration}
\end{figure}

\vskip 5pt
{\noindent\it Level 3}

The third forward problem builds on the second forward problem.
We now assume that there is uncertainty, described in terms of probabilities, as to which set $A\in\mathcal{B}_\Lambda$ the model parameters $\lambda$ belong, and the goal is to analyze the probabilities of sets in $\mathcal{B}_{\mathcal{D}}$. 
In the language of probability theory, measurable sets are referred to as events.  
Thus, we assume that a probability measure $P_{\Lambda}$ is given on $(\Lambda,\mathcal{B}_\Lambda)$ describing uncertainty in the events for which parameters may belong, and the goal is to determine $P_{\mathcal{D}}$ on $(\mathcal{D},\mathcal{B}_{\mathcal{D}})$.
We refer to this as the {\em stochastic} forward problem. 
When $P_\Lambda$ (resp., $P_\mathcal{D}$) is absolutely continuous with respect to the volume measure $\mu_{\Lambda}$ (resp., $\mu_{\mathcal{D}}$), the corresponding Radon-Nikodym derivative (i.e., the probability density function) $\rho_{\Lambda}$ (resp., $\rho_{\mathcal{D}}$) is usually given in place of the probability measure.
We assume this is the case so that we can refer to probabilities of events in terms of the more common representation using integrals and density functions, e.g., 
\begin{equation}
    P_{\Lambda}(A) = \int_A \rho_{\Lambda}d\mu_{\Lambda}, \ A\in\mathcal{B}_\Lambda.
\end{equation}
Solution to this stochastic forward problem are given by the induced push-forward probability measure $P_{\mathcal{D}}$ defined for any $A\in\mathcal{B}_{\mathcal{D}}$ by  
\begin{equation}
    P_{\mathcal{D}}(A) = \int_A\rho_{\mathcal{D}}d\mu_{\mathcal{D}} = \int_{Q^{-1}(A)}\rho_{\Lambda}d\mu_{\Lambda} = P_{\Lambda}(Q^{-1}(A)).
\end{equation}
This is a familiar problem in uncertainty quantification.  
The approximate solution can be obtained by a classic Monte Carlo method. 

The corresponding inverse problem assumes we are given a probability measure $P_{\mathcal{D}}$ on $(\mathcal{D},\mathcal{B}_{\mathcal{D}})$, and the goal is to determine a probability measure $P_\Lambda$ on $(\Lambda,\mathcal{B}_\Lambda)$ such that $P_\Lambda(Q^{-1}(A)) = P_{\mathcal{D}}(A)$ for all $A\in\mathcal{B}_{\mathcal{D}}$.  
In other words, the push-forward measure of a solution to the stochastic inverse problem should be $P_{\mathcal{D}}$. 
To determine such a solution to the stochastic inverse problem, we first consider the stochastic inverse problem posed on $(\mathcal{L},\mathcal{B}_{\mathcal{L}})$.

Since $Q$ defines a bijection between $\mathcal{L}$ and $\mathcal{D}$, there is a unique $P_{\mathcal{L}}$ on $(\mathcal{L},\mathcal{B}_{\mathcal{L}})$ (see the right-hand plot in Figure~\ref{fig:contour_illustration} and \cite{BE3}).
We can then use the projection map $\pi_{\mathcal{L}}$ to prove the following theorem.
\begin{theorem}
The stochastic inverse problem has a unique solution on $(\Lambda,\mathcal{C}_{\Lambda})$. 
\end{theorem}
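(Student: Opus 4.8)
The plan is to exploit the factorization $Q = \tilde{Q}\circ\pi_{\mathcal{L}}$, where $\tilde{Q}:\mathcal{L}\to\mathcal{D}$ denotes the bijection induced by $Q$ on the contour equivalence classes (so that all points of a contour $C_\ell$ share the common value $\tilde{Q}(\ell)$), and to transport the given data measure $P_{\mathcal{D}}$ first to $\mathcal{L}$ and then to $(\Lambda,\mathcal{C}_\Lambda)$. The crucial structural observation is that the induced contour $\sigma$-algebra is nothing other than the pullback $\sigma$-algebra, i.e. $\mathcal{C}_\Lambda=\pi_{\mathcal{L}}^{-1}(\mathcal{B}_{\mathcal{L}})=Q^{-1}(\mathcal{B}_{\mathcal{D}})$. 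Because the preimage of a $\sigma$-algebra under a map is automatically a $\sigma$-algebra, every contour event is literally of the form $Q^{-1}(A)$ for some $A\in\mathcal{B}_{\mathcal{D}}$, with no generation step required; this identity is what makes both halves of the claim fall out cleanly.

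For \emph{existence}, I would first use that $\tilde{Q}$ is a measurable isomorphism between $(\mathcal{L},\mathcal{B}_{\mathcal{L}})$ and $(\mathcal{D},\mathcal{B}_{\mathcal{D}})$ — which holds by the very construction of $(\mathcal{L},\mathcal{B}_{\mathcal{L}},\mu_{\mathcal{L}})$ as the space induced from $(\mathcal{D},\mathcal{B}_{\mathcal{D}},\mu_{\mathcal{D}})$ via $Q$ — to define $P_{\mathcal{L}}(B):=P_{\mathcal{D}}(\tilde{Q}(B))$ for $B\in\mathcal{B}_{\mathcal{L}}$. I would then set $P_\Lambda(\pi_{\mathcal{L}}^{-1}(B)):=P_{\mathcal{L}}(B)$, equivalently $P_\Lambda(Q^{-1}(A)):=P_{\mathcal{D}}(A)$. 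Well-definedness follows from surjectivity of $\pi_{\mathcal{L}}$, since $\pi_{\mathcal{L}}^{-1}(B)=\pi_{\mathcal{L}}^{-1}(B')$ then forces $B=B'$. The probability-measure axioms transfer directly: $P_\Lambda(\Lambda)=P_{\mathcal{L}}(\mathcal{L})=1$, and for a countable disjoint family $\{\pi_{\mathcal{L}}^{-1}(B_i)\}$ surjectivity makes the $B_i$ disjoint while $\bigcup_i\pi_{\mathcal{L}}^{-1}(B_i)=\pi_{\mathcal{L}}^{-1}(\bigcup_i B_i)$, so countable additivity of $P_\Lambda$ is inherited from that of $P_{\mathcal{L}}$. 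By construction $P_\Lambda(Q^{-1}(A))=P_{\mathcal{D}}(A)$ for every $A\in\mathcal{B}_{\mathcal{D}}$, which is exactly the required consistency condition.

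For \emph{uniqueness}, suppose $\tilde{P}$ is any probability measure on $(\Lambda,\mathcal{C}_\Lambda)$ with $\tilde{P}(Q^{-1}(A))=P_{\mathcal{D}}(A)$ for all $A\in\mathcal{B}_{\mathcal{D}}$. Since $\mathcal{C}_\Lambda=Q^{-1}(\mathcal{B}_{\mathcal{D}})$, every set in $\mathcal{C}_\Lambda$ is of this form, so $\tilde{P}$ and $P_\Lambda$ agree on all of $\mathcal{C}_\Lambda$ and hence $\tilde{P}=P_\Lambda$.

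I expect the only genuine subtlety to be the identification step, namely verifying that $\mathcal{C}_\Lambda$ coincides with the pullback $\sigma$-algebra and that $A\mapsto Q^{-1}(A)$ is a bijection of $\sigma$-algebras. This is where the bijection $\tilde{Q}:\mathcal{L}\to\mathcal{D}$ established in \cite{BE3} does the real work: it is what guarantees that prescribing the measure on data events unambiguously prescribes it on contour events. If one instead only knew that $\mathcal{C}_\Lambda$ were \emph{generated} by the contour events, the uniqueness argument would require a Dynkin $\pi$--$\lambda$ argument; the pullback identity is precisely what lets us avoid that and conclude directly.
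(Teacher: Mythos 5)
Your proof is correct and takes essentially the same route as the paper: transport $P_{\mathcal{D}}$ to a unique $P_{\mathcal{L}}$ through the bijection $Q$ induces between $\mathcal{L}$ and $\mathcal{D}$, then pull back along the projection $\pi_{\mathcal{L}}$ to obtain the measure on $(\Lambda,\mathcal{C}_\Lambda)$. The paper only sketches this (deferring details to \cite{BE3}), and your explicit identification $\mathcal{C}_\Lambda=\pi_{\mathcal{L}}^{-1}(\mathcal{B}_{\mathcal{L}})=Q^{-1}(\mathcal{B}_{\mathcal{D}})$, which makes both well-definedness and the uniqueness step immediate without a $\pi$--$\lambda$ argument, is precisely the content the paper leaves implicit.
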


However, the goal is to define a probability measure $P_\Lambda$ on the measurable space $(\Lambda,\mathcal{B}_{\Lambda})$ not on a space involving contour events that are complicated to describe.
This requires an application of the Disintegration Theorem, which allows for the rigorous description of conditional probability measures defined on sets of zero $\mu_\Lambda$-measure \cite{Dellacherie_Meyer, BE3, BET}.
\begin{theorem}\label{thm:ProbabililtyDisintegration}
Let $({\Lambda}, \mathcal{B}_{{\Lambda}})$ be a measurable space. 
Assume that $P_{{\Lambda}}$ is a probability measure on $({\Lambda}, \mathcal{B}_{{\Lambda}})$. 
There exists a family of conditional probability measures $\set{P_{\ell}}$ on $\set{(C_{\ell},\mathcal{B}_{C_\ell})}$ giving the disintegration,
\begin{equation}\label{eq:finaldisintegration}
	P_{{\Lambda}}(A)  = \int_{\pi_{\mathcal{L}}(A)} \bigg( \int_{\pi^{-1}_{\mathcal{L}}(\ell) \cap A} \, dP_{\ell}(\lambda) \bigg) dP_{\mathcal{L}}(\ell), \ \forall A\in\mathcal{B}_{\Lambda}.
\end{equation}
\end{theorem}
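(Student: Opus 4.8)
The plan is to recognize Theorem~\ref{thm:ProbabililtyDisintegration} as an instance of the classical Disintegration Theorem for a measurable fibration, applied to the projection $\pi_{\mathcal{L}}:\Lambda\to\mathcal{L}$. First I would observe that the sub-$\sigma$-algebra generated by $\pi_{\mathcal{L}}$, namely $\pi_{\mathcal{L}}^{-1}(\mathcal{B}_{\mathcal{L}})$, is precisely the contour $\sigma$-algebra $\mathcal{C}_\Lambda$ introduced above, and that $P_{\mathcal{L}}$ is the push-forward $P_{\mathcal{L}}(E)=P_\Lambda(\pi_{\mathcal{L}}^{-1}(E))$ for $E\in\mathcal{B}_{\mathcal{L}}$ (which one checks by feeding $A=\pi_{\mathcal{L}}^{-1}(E)$ into the claimed identity). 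The statement then asserts that $P_\Lambda$ can be reconstructed by first conditioning on the contour to which $\lambda$ belongs and then averaging those conditional measures against $P_{\mathcal{L}}$.

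The key steps, in order, would be as follows. (i) Verify that $\pi_{\mathcal{L}}$ is $(\mathcal{B}_\Lambda,\mathcal{B}_{\mathcal{L}})$-measurable; since $\mathcal{L}$ inherits its $\sigma$-algebra through the bijection $Q$ and the assumed piecewise-smoothness of $Q$, this is immediate. (ii) Invoke the existence of a \emph{regular} conditional probability for $P_\Lambda$ given the sub-$\sigma$-algebra $\mathcal{C}_\Lambda$: this produces a kernel $\lambda\mapsto P_\Lambda(\,\cdot\mid\mathcal{C}_\Lambda)(\lambda)$ such that $\lambda\mapsto P_\Lambda(A\mid\mathcal{C}_\Lambda)(\lambda)$ is $\mathcal{C}_\Lambda$-measurable for each $A\in\mathcal{B}_\Lambda$ and $P_\Lambda(A)=\int_\Lambda P_\Lambda(A\mid\mathcal{C}_\Lambda)(\lambda)\,dP_\Lambda(\lambda)$. (iii) Because each version is $\mathcal{C}_\Lambda$-measurable, it is constant on each fiber $C_\ell$, hence factors through $\pi_{\mathcal{L}}$ and defines a family $\set{P_\ell}_{\ell\in\mathcal{L}}$; performing the change of variables $\ell=\pi_{\mathcal{L}}(\lambda)$ converts the outer integral over $\Lambda$ into one over $\mathcal{L}$ against $dP_{\mathcal{L}}(\ell)$. (iv) Show that for $P_{\mathcal{L}}$-a.e.\ $\ell$ the measure $P_\ell$ concentrates on its own fiber, $P_\ell(C_\ell)=1$; this both identifies $P_\ell$ as a probability measure on $(C_\ell,\mathcal{B}_{C_\ell})$ and forces $P_\ell(A\cap C_\ell)=0$ whenever $\ell\notin\pi_{\mathcal{L}}(A)$, so the integrand vanishes off $\pi_{\mathcal{L}}(A)$ and the integral over $\mathcal{L}$ collapses to the integral over $\pi_{\mathcal{L}}(A)$ appearing in \eqref{eq:finaldisintegration}.

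I expect steps (ii) and (iv) to be the crux, and the reason the theorem is cited from \cite{Dellacherie_Meyer} rather than proved from scratch: existence of a regular version of the conditional probability—one that is genuinely a measure in its set argument for (almost) every fixed $\lambda$—is not automatic for an arbitrary measurable space and requires a standardness hypothesis on $(\Lambda,\mathcal{B}_\Lambda)$, which is available here because $\Lambda\subset\reals^n$ makes it a standard Borel space. The fiber-concentration property in step (iv) is the delicate measure-theoretic point: conditioning on $\mathcal{C}_\Lambda$ intuitively fixes the contour with certainty, but making this rigorous requires that $\mathcal{B}_{\mathcal{L}}$ be countably generated and separate points, so that applying the kernel identity along a generating sequence of events pins $P_\ell$ to $C_\ell$ off a $P_{\mathcal{L}}$-null set. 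Once these two regularity facts are secured, \eqref{eq:finaldisintegration} is just a rewriting of the law of total probability, and the remainder is routine.
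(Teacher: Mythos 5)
The paper never proves Theorem~\ref{thm:ProbabililtyDisintegration}: it is stated as a quoted result, with the proof delegated to \cite{Dellacherie_Meyer} and the companion works \cite{BE3, BET}. So there is no internal argument to compare yours against; what can be said is that your sketch is the standard proof of exactly the result those references supply, and it is essentially correct. You identify the right reduction --- the contour $\sigma$-algebra $\mathcal{C}_\Lambda = \pi_{\mathcal{L}}^{-1}(\mathcal{B}_{\mathcal{L}})$, $P_{\mathcal{L}}$ as the push-forward of $P_\Lambda$ under $\pi_{\mathcal{L}}$, existence of a regular conditional probability given $\mathcal{C}_\Lambda$, factorization of the $\mathcal{C}_\Lambda$-measurable kernel through the quotient, and fiber concentration --- and you correctly flag the two places where hypotheses beyond ``measurable space'' are genuinely needed: regularity of the conditional probability (available here because $\Lambda\subset\reals^n$ with its Borel $\sigma$-algebra is standard) and a countably generated, point-separating $\mathcal{B}_{\mathcal{L}}$ (available because $Q$ is a bijection of $\mathcal{L}$ onto $\mathcal{D}\subset\reals^m$ and $\mathcal{B}_{\mathcal{L}}$ is induced from $\mathcal{B}_{\mathcal{D}}$), which pins $P_\ell(C_\ell)=1$ off a $P_{\mathcal{L}}$-null set. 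That concentration step also quietly handles a point the paper's formula glosses over: $\pi_{\mathcal{L}}(A)$, being the image of a measurable set, need not itself be measurable, but since the integrand vanishes off it the outer integral can be read as an integral over all of $\mathcal{L}$. Note in this respect that the theorem as literally stated --- for an arbitrary measurable space --- is too strong, since regular conditional probabilities can fail to exist without a standardness assumption; your insistence on these hypotheses is a correction rather than a defect, and your proposal is a faithful reconstruction of what the cited sources actually prove.
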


Thus, any probability measure on $(\Lambda,\mathcal{B}_{\Lambda})$ can be decomposed into a form involving a probability measure on $(\mathcal{L},\mathcal{B}_{\mathcal{L}})$ uniquely defined by $P_{\mathcal{D}}$ and probability measures on each measurable generalized contour space $(C_{\ell},\mathcal{B}_{C_{\ell}})$ defined by the conditional probabilities $P_{\ell}$.
The conditional probability measures on $\set{(C_{\ell},\mathcal{B}_{C_{\ell}})}$ can not be determined by observations of $Q(\lambda)\in\mathcal{D}$. 
We follow \cite{BE3, BET} and adopt what is referred to as the {\em standard Ansatz} determined by the disintegration of the volume measure $\mu_\Lambda$ to compute probabilities of events inside of a contour event. The standard Ansatz is given by
\begin{equation}\label{eq:StandardAnsatz}
	P_{\ell} = \mu_{C_{\ell}}/\mu_{C_{\ell}}(C_{\ell}), \ \forall\ell\in\mathcal{L},
\end{equation}
where $\mu_{C_\ell}$ is the disintegrated volume measure on the generalized contour $C_\ell$. 

Note that the standard Ansatz can be used as long as $\mu_{\Lambda}(\Lambda)<\infty$.
The approximation method and resulting non-intrusive computational algorithm can be easily modified for any Ansatz.
See \cite{BE3} for more details and theory regarding general choices of the Ansatz.  
Combining an Ansatz with the Disintegration Theorem proves the following theorem.
\begin{theorem}\label{thm:Lambda_inverse}
Under the Ansatz, the stochastic inverse problem has a unique solution on $(\Lambda,\mathcal{B}_{\Lambda})$. 
\end{theorem}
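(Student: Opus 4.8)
The plan is to bootstrap the uniqueness already available on the contour $\sigma$-algebra up to the full Borel $\sigma$-algebra by feeding the Disintegration Theorem two independently determined ingredients. The disintegration \eqref{eq:finaldisintegration} expresses any candidate $P_\Lambda$ on $(\Lambda,\mathcal{B}_\Lambda)$ through a marginal $P_\mathcal{L}$ on the equivalence-class space together with a conditional family $\{P_\ell\}$ along the generalized contours. My central observation is that these two pieces are pinned down separately: the data $P_\mathcal{D}$ fixes the marginal, while the Ansatz \eqref{eq:StandardAnsatz} fixes every conditional.

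For existence I would simply \emph{construct} the solution from \eqref{eq:finaldisintegration}. I would take $P_\mathcal{L}$ to be the measure induced from $P_\mathcal{D}$ through the bijection $Q:\mathcal{L}\to\mathcal{D}$ --- unique by the discussion preceding the first theorem --- and take each conditional to be the Ansatz measure $P_\ell=\mu_{C_\ell}/\mu_{C_\ell}(C_\ell)$, which is well defined whenever $\mu_\Lambda(\Lambda)<\infty$. The resulting set function is a probability measure on $(\Lambda,\mathcal{B}_\Lambda)$ by the Disintegration Theorem. To check it solves the inverse problem I would evaluate \eqref{eq:finaldisintegration} at $A=Q^{-1}(A_0)$ for $A_0\in\mathcal{B}_\mathcal{D}$. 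Because $Q$ is constant on each contour, $Q^{-1}(A_0)$ is a union of full contours, so for every $\ell$ either $\pi^{-1}_\mathcal{L}(\ell)\subseteq Q^{-1}(A_0)$ or $\pi^{-1}_\mathcal{L}(\ell)\cap Q^{-1}(A_0)=\emptyset$. The inner integral therefore collapses to $P_\ell(C_\ell)=1$ over the index set $\pi_\mathcal{L}(Q^{-1}(A_0))$ and to $0$ elsewhere, leaving $P_\Lambda(Q^{-1}(A_0))=P_\mathcal{L}(\pi_\mathcal{L}(Q^{-1}(A_0)))$, which equals $P_\mathcal{D}(A_0)$ by the defining property of $P_\mathcal{L}$.

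For uniqueness I would take two solutions $P_\Lambda$ and $\tilde P_\Lambda$ obeying the Ansatz and disintegrate both via \eqref{eq:finaldisintegration}. The Ansatz forces them to share the identical conditional family $\{P_\ell\}$, so they can differ only through their marginals on $\mathcal{L}$. The collapsing computation of the previous paragraph shows that the push-forward of a disintegrated measure depends only on its marginal; since both measures push forward to the same $P_\mathcal{D}$, both marginals must equal the unique $P_\mathcal{L}$ induced by $P_\mathcal{D}$ through the bijection $Q$. Identical marginals and identical conditionals substituted into \eqref{eq:finaldisintegration} then force $P_\Lambda=\tilde P_\Lambda$ on all of $\mathcal{B}_\Lambda$.

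The step I expect to demand the most care is the collapsing argument --- rigorously justifying that the push-forward of the disintegrated measure through $Q$ sees only the marginal $P_\mathcal{L}$. This requires knowing that the index set $\pi_\mathcal{L}(Q^{-1}(A_0))$ is $\mathcal{B}_\mathcal{L}$-measurable and that the contour decomposition of $Q^{-1}(A_0)$ commutes with the iterated integral of \eqref{eq:finaldisintegration}; both facts rest on the measurability of $\pi_\mathcal{L}$ and $Q$ and on the construction of $(\mathcal{L},\mathcal{B}_\mathcal{L},\mu_\mathcal{L})$ as the space induced by the bijection $Q$.
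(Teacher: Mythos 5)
Your proposal is correct and follows essentially the same route as the paper: the paper's (sketched) proof is precisely that the Disintegration Theorem splits any candidate $P_\Lambda$ into a marginal on $(\mathcal{L},\mathcal{B}_{\mathcal{L}})$, uniquely fixed by $P_{\mathcal{D}}$ through the bijection $Q$, and conditionals on the contours, uniquely fixed by the Ansatz \eqref{eq:StandardAnsatz}. Your write-up merely makes explicit the existence construction and the ``collapsing'' push-forward computation that the paper leaves implicit by citing \cite{BE3, BET}.
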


The standard Ansatz results in a probability measure $P_\Lambda$ that inherits key geometric features from the generalized contour map.
The only assumption necessary for employing the standard Ansatz is that a finite volume measure exists, which actually means we only assume that a metric exists as described above and that the diameter of $\Lambda$ is bounded.
Any other choice of Ansatz imposes some other geometric constraints on the solution to the inverse problem that are not coming from the map $Q$.
If we choose not to use any Ansatz, then we can always solve the stochastic inverse problem on $(\Lambda,\mathcal{C}_\Lambda)$, i.e., we can always compute the unique probability measure when restricted to contour events.

\subsection{Numerical Approximation of Solutions}\label{Sec:Numerical_Approximation}

Fundamental to approximating solutions $P_\Lambda$ to the stochastic inverse problem is the approximation of events in the various $\sigma$-algebras, $\mathcal{B}_{\mathcal{D}}$, $\mathcal{C}_\Lambda$ and $\mathcal{B}_\Lambda$. 
Since $\mathcal{C}_\Lambda\subset\mathcal{B}_\Lambda$, we can simultaneously approximate events in both of these $\sigma$-algebras using the same set of events partitioning $\Lambda$.
Let $\set{\mathcal{V}_i}_{i=1}^N$ denote such a partition of $\Lambda$ where $\mathcal{V}_i\in\mathcal{B}_\Lambda$ for each $i$. 
Assume that we are given a collection of sets $\set{D_k}_{k=1}^M\subset\mathcal{B}_{\mathcal{D}}$ partitioning $\mathcal{D}$.
The basic algorithmic procedure for approximating $P_\Lambda(\mathcal{V}_i)$ for each $i$ is to determine which of the $\set{\mathcal{V}_i}_{i=1}^N$s approximate $Q^{-1}(D_k)$, and then to apply the Ansatz on this approximation of the contour event which has known probability $P_{\mathcal{D}}(D_k)$.
Letting $p_{\Lambda,i}$ denote this approximation, we can then define an approximation to $P_{\Lambda}$ as
\[
	P_\Lambda(A) \approx P_{\Lambda,N}(A) = \sum_{i=1}^N p_{\Lambda,i}\chi_{\mathcal{V}_i}(A), \ A\in\mathcal{B}_\Lambda.
\]
We summarize this basic procedure for approximating $P_\Lambda$ in Algorithm~\ref{Alg:1}.

\begin{algorithm}
	\begin{enumerate}[1.]
		\item Let $\{\mathcal{V}_i\}_{i=1}^N\subset\mathcal{B}_\Lambda$ partition $\Lambda$.
		\item Determine a nominal value $Q_i$ for the map $Q(\lambda)$ on $\mathcal{V}_i$ for $i=1,..,N$.
		\item Choose a partitioning of $\mathcal{D}$, $\{ D_k\}_{k=1}^M \subset \mathcal{D}$.
		\item Compute $p_{\mathcal{D},k} =P_{\mathcal{D}}(D_k)$ for $k=1,...,M$.
		\item Let $\mathcal{C}_k = \{i | Q_i \in D_k \}$ for  $k=1,...,M$.
		\item Let $\mathcal{O}_i = \{k | Q_i \in D_k \}$ for $i=1,..,N$.
		\item Compute $V_i= \mu_\Lambda(\mathcal{V}_i)$ for $i=1,..,N$.
		\item Set $p_{\Lambda, i} = (V_{i}/ \sum_{j \in \mathcal{C}_{\mathcal{O}_i}} V_j) p_{\mathcal{D}, \mathcal{O}_i}$ for $i=1,..,N$.
	\end{enumerate}
	\caption{Numerical Approximation of Inverse Probability Measure}\label{Alg:1}
\end{algorithm}

In Algorithm~\ref{Alg:1}, $\mathcal{C}_k$ is used to determine which sets from $\set{\mathcal{V}_i}_{i=1}^N$ approximate the contour event $Q^{-1}(D_k)$ for each $k$.
Similarly, $\mathcal{O}_i$ is used to determine which contour event $Q^{-1}(D_k)$ is associated to $\mathcal{V}_i$ for each $i$.
The Ansatz is applied in the final step where the probability of each $\mathcal{V}_i$ is determined by the probability $P_{\mathcal{D}}(D_{\mathcal{O}_i})$ multiplied by the ratio of the volume of $\mathcal{V}_i$ to the volume of the approximate contour event of $Q^{-1}(D_{\mathcal{O}_i})$.

The ability to accurately approximate contour events by subsets of $\set{\mathcal{V}_i}_{i=1}^N$ is critical in computing accurate approximations of $P_{\Lambda}$.
In other words, the ability to numerically approximate solutions to the deterministic inverse sensitivity problem determines the numerical accuracy in the stochastic inverse problem. 
Refining the partition $\set{\mathcal{V}_i}_{i=1}^N$ generally means additional solutions of the model are required in order to compute the nominal value $Q_i$ required in Step 2 of Algorithm~\ref{Alg:1}.
A typical goal is to determine the minimal number of sets partitioning $\Lambda$ such that $P_\Lambda$ can be approximated for a fixed discretization of $P_{\mathcal{D}}$. 
Such ideas and goals were first explored in \cite{Butler2015b} where the concept of {\em skewness} was defined, which described the computational complexity in solving the inverse problem entirely in terms of the number of sets partitioning $\Lambda$ required for accurate approximation of $P_\Lambda$. 
While there are other approximation issues that arise from numerical evaluation of the map $Q$ and approximation of measures by densities on partitions of the various spaces, these have been addressed elsewhere, e.g., see \cite{BE2, BET}, and are only exacerbated by the fundamental problem of approximating contour events. 
To illustrate the geometric concept of skewness and the affect on the number of samples $N$ needed in Algorithm~\ref{Alg:1} to accurately approximate the inverse image, we use the simple example below, where for simplicity, we assume $n=m$. 

Suppose we are given two different QoI maps $Q^{(a)} = (Q_1^{(a)},Q_2^{(a)}) : \Lambda=[0, 1]^2\goto\mathcal{D}^{(a)}\subset\reals^2$ and $Q^{(b)} = (Q_1^{(b)},Q_2^{(b)}) : \Lambda=[0, 1]^2\goto\mathcal{D}^{(b)}\subset\reals^2$ defined by
\begin{eqnarray}
    Q_1^{(a)}(\lambda_1, \lambda_2) &=& \lambda_1, \\
    Q_2^{(a)}(\lambda_1, \lambda_2) &=& \lambda_2, \\
    Q_1^{(b)}(\lambda_1, \lambda_2) &=& \lambda_1 + \lambda_2, \\
    Q_2^{(b)}(\lambda_1, \lambda_2) &=& 0.74\lambda_1 + 1.26\lambda_2.
\end{eqnarray}
Let $\mathcal{D}^{(a)}$ and $\mathcal{D}^{(b)}$ be partitioned by rectangles $\set{D_k^{(a)}}_{k=1}^{M_a}$ and $\set{D_l^{(b)}}_{l=1}^{M_b}$, respectively.
In Figure~\ref{Fig:voronoi_symmdiff}, we show the affect of skewness by considering $(Q^{(a)})^{-1}(D_k^{(a)})$ and $(Q^{(b)})^{-1}(D_l^{(b)})$ for a fixed $k$ and $l$, the error in the Voronoi approximation of this contour event which is given by the symmetric difference (shown by the shaded regions of green), and how this error decreases with increased $N$.  In Figure~\ref{Fig:loglog_symmdiff} and Table~\ref{Table:symmdiff}, we show the convergence of the measure of the symmetric difference to be near the known convergence rate of MC methods, $N^{-1/2}$.  Although numerical approximations of the inverse densities converge at the same rate for both $Q^{(a)}$ and $Q^{(b)}$, for a given $N$ the error for the well conditioned map ($Q^{(a)}$) is about half the error of the poorly conditioned (or highly skewed) map ($Q^{(b)}$).  We describe how to quantify the skewness of a given map $Q$ in Section~\ref{Sec:Skewness}.

\begin{figure}
    \begin{center}
        \textbf{$Q^{(a)}$ \hskip 155pt $Q^{(b)}$}\par\medskip
        \includegraphics[width=.38\textwidth]{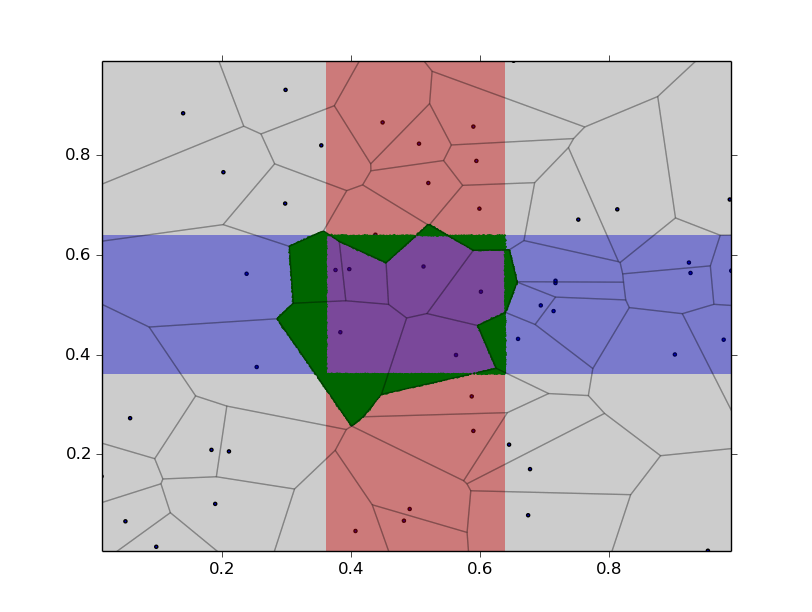}
            \includegraphics[width=.38\textwidth]{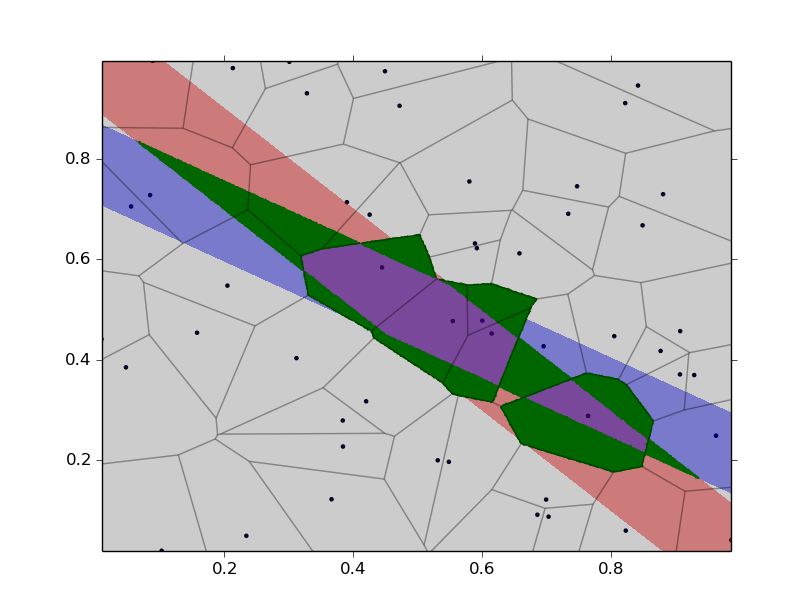}\\
        \includegraphics[width=.38\textwidth]{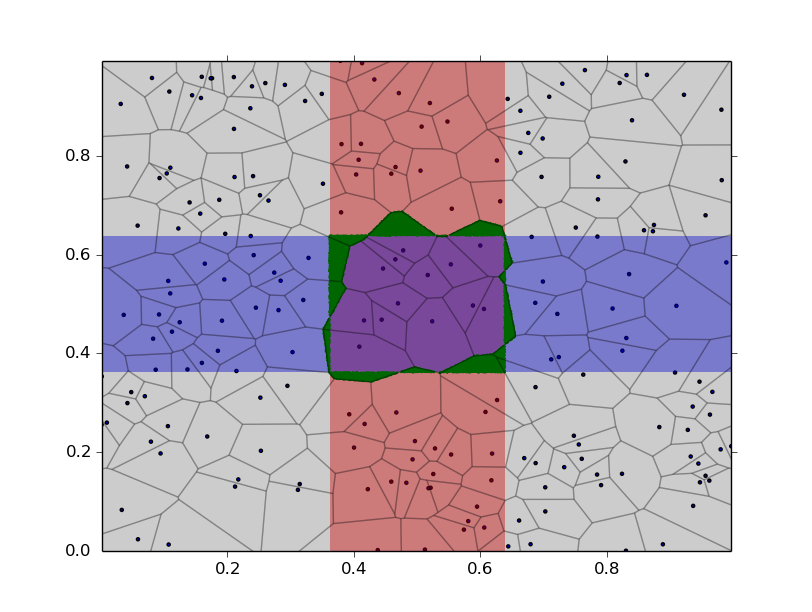}
            \includegraphics[width=.38\textwidth]{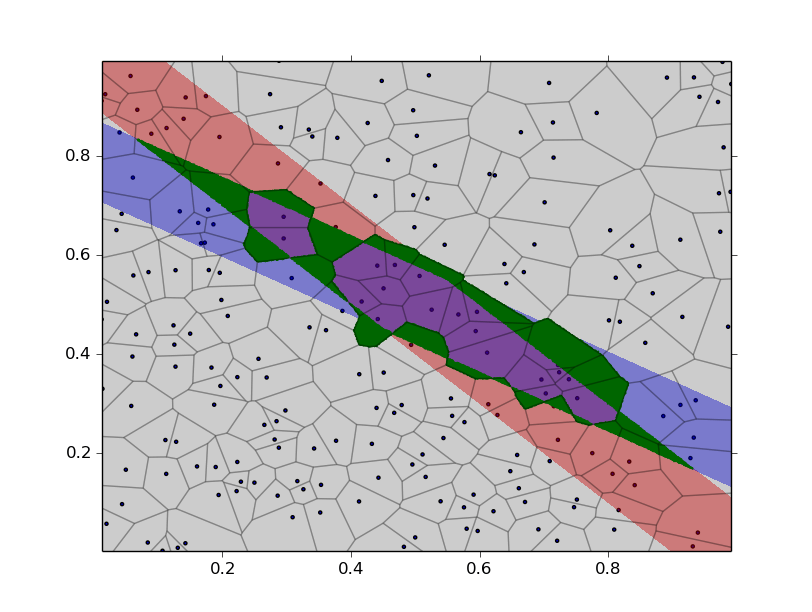}\\
        \includegraphics[width=.38\textwidth]{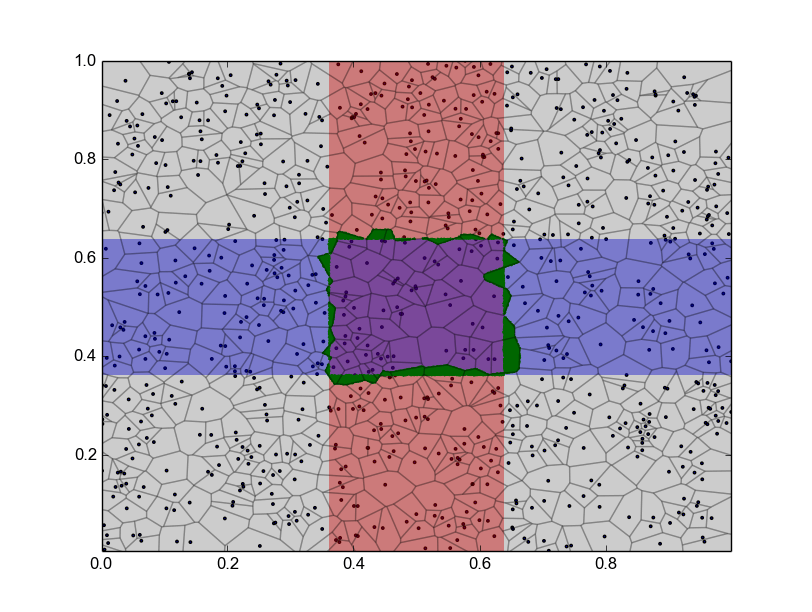}
            \includegraphics[width=.38\textwidth]{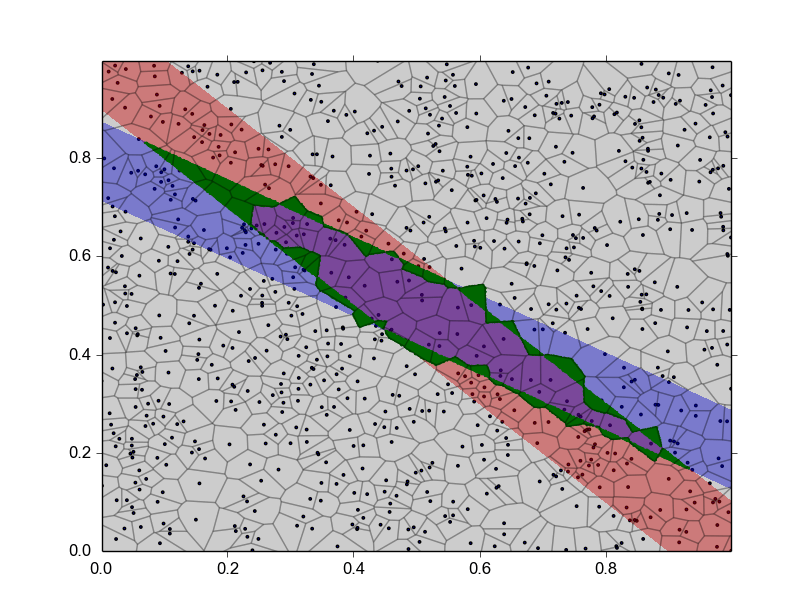}\\
        \includegraphics[width=.38\textwidth]{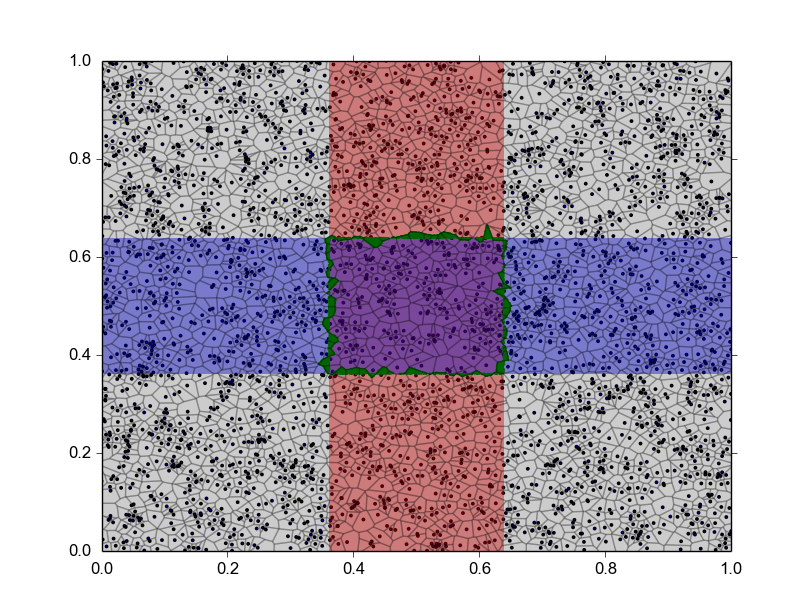}
            \includegraphics[width=.38\textwidth]{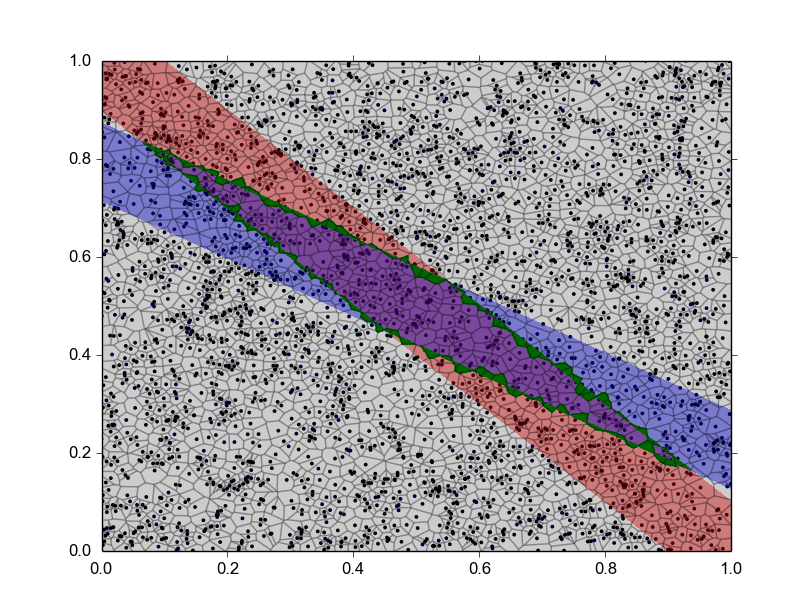}
        \caption{\it We plot the implicit discretization of $\Lambda$ $\{\mathcal{V}_i\}_{i=1}^N$ (given by the uniform random samples), the exact inverse density (given by the intersection of the red and blue contour events) of $D^{(a)}_k$ and $D^{(b)}_l$ for fixed $k$ and $l$, and the symmetric difference of the exact inverese image with the approximated inverse image (seen in green).  We show this for both $Q^{(a)}$ (left) and $Q^{(b)}$ (right).  This is shown for 50 samples (top), 200 samples (second from the top), 800 samples (second from the bottom), and 3200 samples (bottom).}\label{Fig:voronoi_symmdiff}
    \end{center}
\end{figure}

\begin{figure}
	\centering
	\begin{minipage}[t]{.4\textwidth}
	\centering
	\vspace{0pt}
	\includegraphics[width=\textwidth]{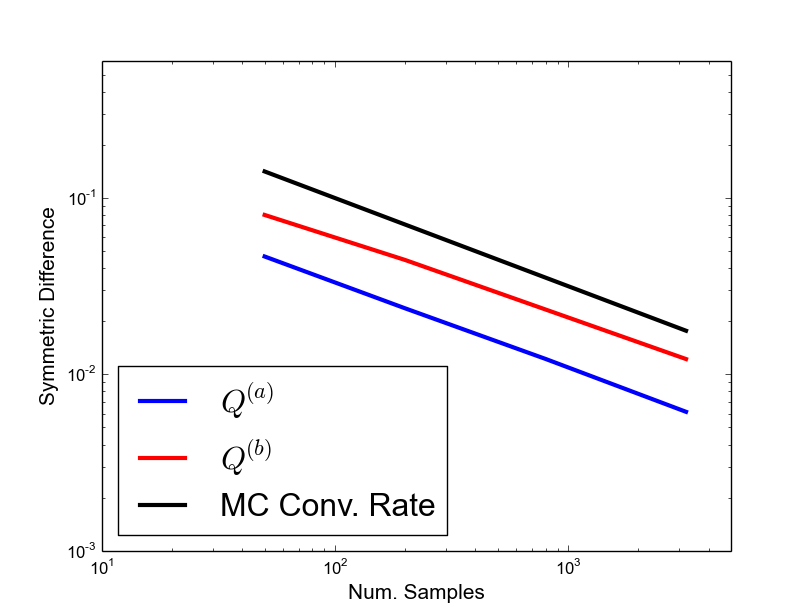}
	\caption{\it Loglog convergence plot for the mean symmetric difference shown in Table~\ref{Table:symmdiff}.}\label{Fig:loglog_symmdiff}
	\end{minipage}
	\hskip 20pt
	\begin{minipage}[t]{.4\textwidth}
	\centering
	\vspace{14pt}
	\begin{tabular}{ccc}
  		Num. Samples & $Q^{(a)}$ & $Q^{(b)}$ \\ \hline \hline
		\vspace{2pt}
  		$50$ &  $4.66E-2$ & $8.01E-2$ \\
		\vspace{2pt}
		$200$ & $2.37E-2$ & $4.45E-2$ \\
		\vspace{2pt}
		$800$ & $1.22E-2$  & $2.33E-2$ \\

		$3200$ & $6.13E-3$  & $1.22E-2$ \\
	\end{tabular}
	\vspace{40pt}
	\captionof{table}{\it Mean of the measure of the symmetric difference for 100 sets of N random samples.}
	\label{Table:symmdiff}
\end{minipage}
\end{figure}

\vskip 10pt
\section{\bf Precision and Accuracy}\label{Sec:Precision_Accuracy}
\vskip 10pt

Following solution to the stochastic inverse problem, we generally want to make, and quantify uncertainty in, predictions that can be formulated as solutions to a stochastic forward problem, e.g., computing a confidence interval on storm surge levels of a hurricane forecast using an ensemble of predictions determined from sampling model input parameters. 
In some cases, we may be able to design the observation network defining the QoI map used in the stochastic inverse problem, e.g., by determining where and when to deploy sensors such as buoys in the Gulf of Mexico.
The problem of defining the ``best'' observation network is generally referred to as optimal experimental design.   
Below, we motivate both the definitions we use to define what is an optimal QoI map as well as the development of criteria for choosing the optimal QoI map. 
Our starting point is the assumption that we solve the stochastic inverse problem in order to {\em quantifiably improve} the predictive capabilities of the model. 

Suppose two different QoI maps, $Q^{(a)}$ and $Q^{(b)}$, define two stochastic inverse problems with solutions represented by the two densities $\rho_\Lambda^{(a)}$ and $\rho_\Lambda^{(b)}$ such that 
\[
	\mu_\Lambda(\supp(\rho_\Lambda^{(a)})) \ll \mu_\Lambda(\supp(\rho_\Lambda^{(b)})).
\] 
For simplicity, assume that $\Lambda\subset\mathbb{R}^n$, $\mu_\Lambda$ is the Lebesgue measure, and the supports of the densities are convex subsets. 
Let $Q^{(p)}$ denote a prediction QoI that is a linear map on $\Lambda$, and let $\mathcal{D}=Q^{(p)}(\Lambda)$ denote the space of possible outcomes for the predictions.
Then, $Q^{(p)}(\supp(\rho_\Lambda^{(a)}))$ and $Q^{(p)}(\supp(\rho_\Lambda^{(b)}))$ define the events of all probable predictions based on solutions to the separate stochastic inverse problems, and $\mu_{\mathcal{D}}(Q^{(p)}(\supp(\rho_\Lambda^{(a)})) \ll \mu_{\mathcal{D}}(Q^{(p)}(\supp(\rho_\Lambda^{(b)}))$.
Consequently, we generally expect that statistical inferences drawn from $Q^{(p)}(\supp(\rho_\Lambda^{(a)}))$ (e.g., the mean of the predictions) to have greater precision in terms of reduced variances, smaller confidence intervals, etc., compared to those drawn from $Q^{(p)}(\supp(\rho_\Lambda^{(b)}))$.

This motivates a general measure-theoretic goal for designing experiments where the goal is to determine an observation network defining the map $Q$ such that solution to the stochastic inverse problem results in a significant amount of probability contained in events of small $\mu_\Lambda$-measure. 
It is clear from Eq.~(\ref{eq:finaldisintegration}), the Ansatz, and Algorithm~\ref{Alg:1}, that the $\mu_\Lambda$-measures of induced contour events play a pivotal role in determining an optimal map $Q$. 
In Section~\ref{Sec:Support}, we describe an efficient computational approach for quantifying the $\mu_\Lambda$-measures of induced contour events for nonlinear maps $Q$.
Given multiple choices for the QoI map, we may use such results to determine the so-called optimal map $Q$.
However, given finite computational resources, it may not be possible to accurately approximate solutions of the stochastic inverse problem using the optimal map $Q$ if only a relatively small number of model solves are allowed as described in Section~\ref{Sec:Numerical_Approximation}.
A computational method for quantifying the global affect of skewness from a nonlinear map $Q$ is described in Section~\ref{Sec:Skewness}. 
In Section~\ref{Sec:Optimizing}, we describe how to optimize the choice of $Q$ to take into account the separate goals of obtaining {\em precise} predictions from stochastic inverse problems that can be solved {\em accurately} with relatively few model evaluations. 

\subsection{$\mu_\Lambda$-measure of support}\label{Sec:Support}

Suppose in an experiment we can obtain a total of $m$ QoI.  Let $B\in\mathcal{B}_{\mathcal{D}}$ define a typical output event from a partition of $\mathcal{D}$ used in Algorithm~\ref{Alg:1}.  If $P_{\mathcal{D}}(B)\approx 1$, then as described above, the goal is to quantify the expected $\mu_\Lambda$-measure of the support of $Q^{-1}(B)$.  Since we do not know $P_{\mathcal{D}}$ a priori, we want to choose a $B$ that is representative of typical tessellations used to discretize any probability measure and also geometrically easy to describe such that $\mu_\Lambda(Q^{-1}(B))$ is computationally inexpensive to approximate.
In measure theory, it is common for $\sigma$-algebras on higher dimensional spaces to be generated from generalized rectangles.
We therefore consider $B$ to be a generalized rectangle in $\mathcal{D}\subset\mathbb{R}^m$. 
Suppose $Q$ is a linear GD map and $n=m$, i.e., $Q$ is defined by the square invertible matrix $J$, $Q(\lambda)=J\lambda$.
In this case, if $\Lambda=\mathbb{R}^n$, we have that
\begin{equation}
	\mu_\Lambda(Q^{-1}(B)) = \mu_\Lambda(J^{-1}(B)) = \mu_{\mathcal{D}}(B)\det(J^{-1}) = \frac{\mu_{\mathcal{D}}(B)}{\det J}.
\end{equation}
Note that if $\Lambda\subset\mathbb{R}^n$ is proper, then the above equation is not necessarily true as $Q^{-1}(B)$ may intersect the boundary of $\Lambda$. 
We neglect such boundary effects in the computations, and simply note that in certain cases they may play an important role although this is not the typical case in our experience. 

In general $Q$ is not linear, so we consider local linear approximations of $Q$ to quantify the $\mu_{\Lambda}$-measure of inverting sets such as $B$ into different regions of $\Lambda$.
Given $\lambda^{(i)}=(\lambda_1^{(i)}, \lambda_2^{(i)} \ldots, \lambda_n^{(i)})\in\Lambda$, denote the Jacobian of $Q$ evaluated at $\lambda^{(i)}$, by
\begin{eqnarray}
    J_{\lambda^{(i)}} = \bcm \frac{\partial Q_1(\lambda^{(i)})}{\partial \lambda_1} & \hdots & \frac{\partial Q_1(\lambda^{(i)})}{\partial \lambda_n} \\
        \vdots & \ddots & \vdots \\
        \frac{\partial Q_m(\lambda^{(i)})}{\partial \lambda_1} & \hdots & \frac{\partial Q_m(\lambda^{(i)})}{\partial \lambda_n} \ecm.
\end{eqnarray}
For simplicity in describing the sets $Q^{-1}(B)$ we initially assume $n=m$ and the map $Q$ is GD, which implies that the Jacobian is square and invertible.  Suppose now that the generalized rectangle $B$ is centered at $Q(\lambda^{(i)})$, then from above we have that
\begin{equation}
    \mu_\Lambda(Q^{-1}(B)) \approx M_Q(\lambda^{(i)}):=\mu_\Lambda(J_{\lambda^{(i)}}^{-1}(B))  = \frac{\mu_{\mathcal{D}}(B)}{\det J_{\lambda^{(i)}}}.
\end{equation}
Here, we introduce the notation $M_Q(\lambda^{(i)})$ as a shorthand for the the size (i.e., measure) of local induced contour events defined around a sample $\lambda^{(i)}$, and we make explicit the dependence on the QoI map $Q$. 
Since the determinant of $J$ can be written as the product of the singular values of $J$, we have
\begin{equation}\label{Eq:prod_singvals}
    M_Q(\lambda^{(i)}) = \mu_{\mathcal{D}}(B)\prod_{k=1}^{m} \, \sigma^*_{ik} = \frac{\mu_{\mathcal{D}}(B)}{\prod_{k=1}^{m} \, \sigma_{ik}}, 
\end{equation}
where $\set{\sigma^*_{ik}}_{k=1}^m$ are the singular values of $J_{\lambda^{(i)}}^{-1}$ and $\set{\sigma_{ik}}_{k=1}^m$ are the singular values of $J_{\lambda^{(i)}}$.   
The {\it average $M_Q(\lambda)$} is given by
\begin{equation}\label{Eq:suppint}
	\overline{M_Q} = \frac{1}{\mu_{\Lambda}(\Lambda)}\int_{\Lambda} M_Q(\lambda) \, d\mu_{\Lambda}.
\end{equation}
Given a set of $N$ samples $\set{\lambda^{(i)}}_{i=1}^N$ in $\Lambda$, we approximate $\overline{M_Q}$ using the Monte Carlo estimate
\begin{equation}\label{Eq:supp}
	\overline{M_Q} \approx \overline{M_{Q,N}} := \frac{1}{N}\sum_{i=1}^N M_Q(\lambda^{(i)}) = \frac{1}{N}\sum_{i=1}^N \frac{\mu_{\mathcal{D}}(B)}{\prod_{k=1}^m\sigma_{ik}}.
\end{equation}

We now relax the assumption that $n=m$ and consider the (more common) case of $m<n$.  
In this case, we recall that a transverse parameterization, which is an explicit representation in $\Lambda$ of the equivalence class structure $\mathcal{L}$ defined by the generalized contours, exists as a (possibly piecewise defined) $m$-dimensional manifold.  
Restricting $Q:\Lambda\to\mathcal{D}$ to be $Q:\mathcal{L}\to\mathcal{D}$, which we denote by $Q|_{\mathcal{L}}$, we then have an $m\times m$ map and can apply the above computations to $Q|_{\mathcal{L}}$. 
In other words, we are now interested in the $\mu_{\mathcal{L}}$-measure of $Q|_{\mathcal{L}}^{-1}(B)\in\mathcal{B}_{\mathcal{L}}$.  
Geometrically, we may think of this problem as determining the measure of the {\em cross section of the contour event intersected with a transverse parameterization manifold} in $\Lambda$. 
Given a local linear approximation of $Q$ at some point $\lambda^{(i)}\in\Lambda$, the following lemma shows that we may approximate $\mu_{\mathcal{L}}(M_{Q|_{\mathcal{L}}}(\lambda^{(i)}))$ similar to before. 
Consequently, the lemma implies that we may compute $\overline{M_{Q|_{\mathcal{L}},N}}$ using $M_{Q|_{\mathcal{L}}}(\lambda^{(i)})$ in Eq.~(\ref{Eq:supp}).

\begin{lemma}\label{Lemma:prod_singvals}
Let $J$ be a full rank $m\times n$ matrix with $m\leq n$.  Then the Lebesgue measure $\mu$ in $\reals^m$ of the $m$-dimensional parallelepiped defined by the $m$ rows of $J$, denoted $\mu(Pa(J))$, is given by the product of the singular values of $J$,
\begin{equation}
    \mu(Pa(J)) = \prod_{i=1}^m\sigma_i.
\end{equation}
\end{lemma}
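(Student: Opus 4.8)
The plan is to combine the classical Gram-determinant formula for the $m$-dimensional volume of a parallelepiped spanned by $m$ vectors in $\reals^n$ with the singular value decomposition (SVD) of $J$. The key identity I would establish first is that the $m$-dimensional Lebesgue (equivalently, $m$-dimensional Hausdorff) measure of $Pa(J)$, the parallelepiped spanned by the $m$ rows $r_1,\dots,r_m$ of $J$, equals the square root of the Gram determinant,
\[
    \mu(Pa(J)) = \sqrt{\det(G)}, \qquad G = JJ^\top, \quad G_{ij} = \langle r_i, r_j\rangle.
\]
This is the natural generalization of the fact that a full-rank square matrix scales volume by $|\det|$; it can be proved by induction on $m$ (splitting each new edge into its component along the span of the previous edges plus an orthogonal ``height,'' so that volume equals base times height), or by a Gram--Schmidt change of variables that carries $Pa(J)$ isometrically into $\reals^m$.

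Next I would bring in the SVD, writing $J = U\Sigma V^\top$ with $U\in\reals^{m\times m}$ and $V\in\reals^{n\times n}$ orthogonal and $\Sigma\in\reals^{m\times n}$ carrying the singular values $\sigma_1,\dots,\sigma_m$ on its main diagonal; the full rank of $J$ guarantees each $\sigma_i>0$. Substituting into the Gramian and using $V^\top V = I_n$ gives
\[
    JJ^\top = U\Sigma V^\top V \Sigma^\top U^\top = U(\Sigma\Sigma^\top)U^\top,
\]
where $\Sigma\Sigma^\top = \mathrm{diag}(\sigma_1^2,\dots,\sigma_m^2)$ is $m\times m$. Since $U$ is orthogonal, $(\det U)^2 = 1$, so $\det(JJ^\top) = \det(\Sigma\Sigma^\top) = \prod_{i=1}^m \sigma_i^2$. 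Taking the positive square root yields $\mu(Pa(J)) = \prod_{i=1}^m \sigma_i$, which is the claim.

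The routine linear algebra in the second step presents no difficulty; the step requiring the most care is the first. I must make precise that ``the Lebesgue measure $\mu$ in $\reals^m$'' of an object that a priori lives in the $m$-dimensional span of the rows inside $\reals^n$ is well defined and coincides with $\sqrt{\det(JJ^\top)}$. This is exactly the point where $m\leq n$ matters: when $m<n$ the rows span only a proper subspace, so one cannot write $\det J$, and the Gram determinant is the correct surrogate. I would therefore treat the volume-via-Gramian identity as the technical heart of the argument and either cite it or supply the short inductive verification, after which the SVD computation finishes the proof immediately.
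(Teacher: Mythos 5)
Your proof is correct, but it takes a genuinely different route from the paper. The paper's proof works with the reduced QR factorization $J^{\top} = \tilde{Q}R$: since $\tilde{Q}$ has orthonormal columns it acts as an isometry from $\reals^m$ onto the row space of $J$ inside $\reals^n$, so $\mu(Pa(J)) = \mu(Pa(R))$; because $R$ is square, this reduces everything to the elementary fact that the volume of $Pa(R)$ equals the product of the singular values of $R$, which coincide with those of $J^{\top}$ and hence of $J$. You instead invoke the classical Gram-determinant formula $\mu(Pa(J)) = \sqrt{\det(JJ^{\top})}$ and finish with the SVD, computing $\det(JJ^{\top}) = \det(\Sigma\Sigma^{\top}) = \prod_{i=1}^m \sigma_i^2$ and taking the positive square root. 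The two arguments carry the same geometric content --- indeed, if you prove the Gramian identity by Gram--Schmidt as you suggest, that step is essentially the paper's QR factorization in disguise --- but they package it differently: the paper localizes the ``volume of an $m$-dimensional object sitting in $\reals^n$ is well defined'' issue in the statement that $\tilde{Q}$ preserves measure, whereas you localize it in a single citable classical fact, after which your SVD step is pure algebra and arguably cleaner than tracking how singular values pass through a QR factorization. You correctly identify the Gramian formula as the technical heart and note precisely where $m \leq n$ matters (no $\det J$ exists when $m < n$); both proofs are complete, yours resting on a standard external result, the paper's being self-contained at the level of elementary matrix factorizations.
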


\begin{proof}
The singular values of $J$ are equal to the singular values of $J^{\top}$.  Consider the reduced QR factorization of $J^{\top}$,
\begin{equation}
    J^{\top} = \tilde QR,
\end{equation}
where $\tilde Q$ is $n\times m$ and $R$ is $m\times m$.  
By the properties of the QR factorization, we know the singular values of $R$ are the same as the singular values of $J^{\top}$.  Let $x\in\reals^m$, then
\begin{equation}
	||\tilde Qx||^2=(\tilde Qx)^{\top}(\tilde Qx)=x^{\top}\tilde Q^{\top}\tilde Qx=x^{\top}x=||x||^2,
\end{equation}
so $\tilde Q$ is an isometry.  This implies the Lebesgue measure of the parallelepiped defined by the rows of $R$ is equal to the Lebesgue measure of the parallelepiped defined by the columns of $J^{\top}$, or the rows of $J$,
\begin{equation}
    \mu(Pa(J)) = \mu(Pa(R)) = \prod_{k=1}^m\gamma_k = \prod_{k=1}^m\sigma_k,
\end{equation}
where $\gamma_k$ are the singular values of $R$ and $\sigma_k$ are the singular values of $J$.
\end{proof}

\subsection{Skewness}\label{Sec:Skewness}

While determining the QoI map $Q$ that optimally reduces $\mu_\Lambda(Q^{-1}(B))$ for $B\in\mathcal{B}_\mathcal{D}$ should reduce uncertainty (i.e., increase precision) in predictions, we often need to computationally approximate $Q^{-1}(B)$ as described in Algorithm~\ref{Alg:1}. Errors in the approximations to $P_\Lambda$ propagate to errors in predictions, and while the predicted sets of high probability may have small $\mu_{\mathcal{D}}$-measure, the errors in the prediction may be so large as to render such predictions useless in practice.  Hence, in this section we quantify the {\em skewness} of local induced contour maps and then expand this to the {\em average skewness} as done in Section~\ref{Sec:Support} for the $\mu_\Lambda$-measure, $M_Q$.

Again, we refer to local linear approximations of the given map $Q$ at some $\lambda^{(i)}\in\Lambda$, $J_{\lambda^{(i)}}$.  Let $Pa(J_{\lambda^{(i)}})$ denote the parallelepiped defined by the $n$-dimensional vectors $j_1, \hdots, j_m$ that are the rows of $J_{\lambda^{(i)}}$.  We use $\mu(Pa(J_{\lambda^{(i)}}))$ to denote the measure of the parallelepiped as an $m$-dimensional object.  This measure is calculated in terms of the singular values of $J_{\lambda^{(i)}}$ by Lemma \ref{Lemma:prod_singvals}.  A fundamental decomposition is

\begin{theorem}
    Given $n$-dimensional vectors $j_1, \hdots, j_m$, there exists vectors $j_1^{\perp}, j_1^0$ such that
    \begin{equation*}
        j_1 = j_1^{\perp} + j_1^0, \hskip 10pt j_1^{\perp}\perp j_1^0, \hskip 10pt j_1^0\in span\{j_2, \hdots, j_m\},    
    \end{equation*}
    and
	\begin{equation}\label{Eq:mu_ji}
        \mu(Pa(J_{\lambda^{(i)}})) = |j_1^{\perp}| \times \mu(Pa(J_{1,\lambda^{(i)}})),
	\end{equation}
	where $J_{\lambda^{(i)}}$ denotes the $m\times n$ matrix formed by the vectors $j_1, \hdots, j_m$, $J_{1,\lambda^{(i)}}$ denotes the matrix $J_{\lambda^{(i)}}$ with the first row deleted, and $\mu(Pa(J_{1,\lambda^{(i)}}))$ denotes the measure of the parallelepiped defined by the $n$-dimensional vectors $j_2, \hdots, j_m$ as an $(m-1)$-dimensional object.
\end{theorem}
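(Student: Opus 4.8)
The plan is to prove the statement in two stages: first construct the decomposition by orthogonal projection, then establish the volume identity via the Gram determinant, exploiting the fact from Lemma~\ref{Lemma:prod_singvals} that the product of singular values equals $\sqrt{\det(JJ^\top)}$.

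First I would obtain the vectors $j_1^\perp$ and $j_1^0$ as the orthogonal decomposition of $j_1$ relative to the subspace $W = \operatorname{span}\{j_2, \ldots, j_m\}$. Setting $j_1^0$ to be the orthogonal projection of $j_1$ onto $W$ and $j_1^\perp = j_1 - j_1^0$, we have $j_1^0 \in W$ and $j_1^\perp \perp W$, and in particular $j_1^\perp \perp j_1^0$ since $j_1^0 \in W$; this is exactly the required decomposition. (The degenerate case $m=1$, where $W=\{0\}$, gives $j_1^0 = 0$ and $j_1^\perp = j_1$, consistent with the convention $\mu(Pa(J_{1,\lambda^{(i)}})) = 1$.)

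For the volume identity, I would first rewrite the $m$-dimensional measure as a Gram determinant. By Lemma~\ref{Lemma:prod_singvals}, together with the fact that the singular values of $J_{\lambda^{(i)}}$ are the nonnegative square roots of the eigenvalues of $J_{\lambda^{(i)}}J_{\lambda^{(i)}}^\top$, we have $\mu(Pa(J_{\lambda^{(i)}})) = \prod_k \sigma_k = \sqrt{\det(J_{\lambda^{(i)}}J_{\lambda^{(i)}}^\top)}$. The crucial observation is that replacing the first row $j_1$ by $j_1^\perp = j_1 - j_1^0$ amounts to subtracting from the first row a fixed linear combination of $j_2, \ldots, j_m$ (because $j_1^0 \in W$). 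Thus the modified matrix $J'$ with rows $j_1^\perp, j_2, \ldots, j_m$ satisfies $J' = E J_{\lambda^{(i)}}$ for a unit-determinant elementary matrix $E$, so that $\det(J'J'^\top) = (\det E)^2 \det(J_{\lambda^{(i)}}J_{\lambda^{(i)}}^\top) = \det(J_{\lambda^{(i)}}J_{\lambda^{(i)}}^\top)$; the shear leaves the parallelepiped volume unchanged.

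Finally I would compute the Gram determinant of $J'$ directly. Since $j_1^\perp \perp j_k$ for every $k \geq 2$, the Gram matrix $J'J'^\top$ is block diagonal with a $1\times 1$ block equal to $|j_1^\perp|^2$ and an $(m-1)\times(m-1)$ block equal to the Gram matrix $G_1$ of $j_2, \ldots, j_m$. Hence $\det(J'J'^\top) = |j_1^\perp|^2 \det G_1$, and taking square roots yields $\mu(Pa(J_{\lambda^{(i)}})) = |j_1^\perp|\sqrt{\det G_1} = |j_1^\perp| \times \mu(Pa(J_{1,\lambda^{(i)}}))$, again invoking Lemma~\ref{Lemma:prod_singvals} to identify $\sqrt{\det G_1}$ with the $(m-1)$-dimensional measure. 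The main obstacle is the middle step: justifying that the shear $j_1 \mapsto j_1^\perp$ preserves the $m$-dimensional volume. The Gram-determinant formulation makes this transparent, reducing it to the elementary-matrix computation $\det E = 1$; the remaining steps are routine linear algebra encapsulating the classical base-times-height principle.
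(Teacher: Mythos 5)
Your proof is correct, but there is no proof in the paper to compare it against: the paper states this result as a known ``fundamental decomposition'' and immediately proceeds to the definition of local skewness, deferring to \cite{Butler2015b}, so your argument is effectively filling a gap the paper leaves open. On its own merits the proof is complete and meshes well with the paper's toolkit. The construction of $j_1^0$ as the orthogonal projection of $j_1$ onto $W=\mathrm{span}\{j_2,\dots,j_m\}$ and $j_1^\perp = j_1 - j_1^0$ is the standard one; the identification $\mu(Pa(J_{\lambda^{(i)}})) = \prod_k \sigma_k = \sqrt{\det\bigl(J_{\lambda^{(i)}}J_{\lambda^{(i)}}^\top\bigr)}$ correctly combines Lemma~\ref{Lemma:prod_singvals} with the fact that the $\sigma_k^2$ are the eigenvalues of $J_{\lambda^{(i)}}J_{\lambda^{(i)}}^\top$; the shear $J' = EJ_{\lambda^{(i)}}$ with $\det E = 1$ is the right way to justify volume invariance; and the block-diagonal Gram computation $\det(J'J'^\top) = |j_1^\perp|^2\det G_1$ cleanly delivers the base-times-height identity \eqref{Eq:mu_ji}. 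Two minor points of hygiene: $E$ is really a product of elementary matrices (one for each coefficient in a representation $j_1^0 = \sum_{k\geq 2} c_k j_k$, which exists but need not be unique), though being unit upper triangular it still has determinant $1$; and Lemma~\ref{Lemma:prod_singvals} is stated only for full-rank $J$, so in the rank-deficient case you should remark that both sides of \eqref{Eq:mu_ji} vanish (if $j_1\in W$ then $j_1^\perp = 0$, and if $j_2,\dots,j_m$ are dependent then both parallelepipeds are degenerate), which your Gram-determinant formulation already handles once $\mu(Pa(\cdot))$ is read as the square root of the Gram determinant throughout.
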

With this fundamental decomposition we define the local skewness of a given map $Q$ at some point $\lambda^{(i)}\in\Lambda$, see \cite{Butler2015b}.

\begin{definition}
    For a vector $j_k$, we define
	\begin{equation}\label{Eq:skew}
        S_Q(J_{\lambda^{(i)}}, j_k) = \frac{|j_k|}{|j_k^{\perp}|}.
	\end{equation}
    Then we define the {\bf local skewness} at a point $\lambda^{(i)}$ as
	\begin{equation}\label{Eq:Skew}
        S_Q(J_{\lambda^{(i)}}) = \max_k \, S_Q(J_{\lambda^{(i)}}, j_k).
	\end{equation}
\end{definition}

\begin{corollary}\label{Cor:singular_values}
    The local skewness of $Q$, $S_Q(J_{\lambda^{(i)}})$, can be completely determined by the the norms of $n$-dimensional vectors and products of singular values of the Jacobians of QoI maps of dimenions $m-1$ and $m$,
    	\begin{equation}
    		S_Q(J_{\lambda^{(i)}}) = \max_k \frac{|j_k|\mu(Pa(J_{k,\lambda^{(i)}}))}{\mu(Pa(J_{\lambda^{(i)}}))}.
    \end{equation}
\end{corollary}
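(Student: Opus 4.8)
The plan is to recognize that the asserted identity is essentially a rearrangement of the fundamental decomposition theorem stated just above, applied to each row of $J_{\lambda^{(i)}}$ in turn, and then to close the "completely determined" claim by invoking Lemma~\ref{Lemma:prod_singvals} to rewrite the two parallelepiped measures as products of singular values. First I would extend the decomposition, which is phrased only for the first row $j_1$, to an arbitrary row $j_k$. Since permuting the rows of $J_{\lambda^{(i)}}$ merely reorders the generating vectors of the parallelepiped (equivalently, left-multiplies by an orthogonal permutation matrix, which preserves singular values), the quantity $\mu(Pa(J_{\lambda^{(i)}}))$ is unchanged and I may move $j_k$ into the first position and apply Eq.~(\ref{Eq:mu_ji}) verbatim. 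This yields the base-times-height factorization
\[
    \mu(Pa(J_{\lambda^{(i)}})) = |j_k^{\perp}| \times \mu(Pa(J_{k,\lambda^{(i)}})),
\]
where $J_{k,\lambda^{(i)}}$ denotes $J_{\lambda^{(i)}}$ with its $k$-th row deleted, measured as an $(m-1)$-dimensional parallelepiped, and $j_k^{\perp}$ is the component of $j_k$ orthogonal to the span of the remaining rows.

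Next, using that $Q$ is GD so that the rows of $J_{\lambda^{(i)}}$ are linearly independent and hence $\mu(Pa(J_{k,\lambda^{(i)}}))>0$, I would solve the factorization for the height, $|j_k^{\perp}| = \mu(Pa(J_{\lambda^{(i)}}))/\mu(Pa(J_{k,\lambda^{(i)}}))$, and substitute directly into the definition in Eq.~(\ref{Eq:skew}) to obtain
\[
    S_Q(J_{\lambda^{(i)}}, j_k) = \frac{|j_k|}{|j_k^{\perp}|} = \frac{|j_k|\,\mu(Pa(J_{k,\lambda^{(i)}}))}{\mu(Pa(J_{\lambda^{(i)}}))}.
\]
Taking the maximum over $k$ as in Eq.~(\ref{Eq:Skew}) then gives the displayed formula. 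To justify that the local skewness is completely determined by norms and products of singular values of the $m$- and $(m-1)$-dimensional Jacobians, I would finally apply Lemma~\ref{Lemma:prod_singvals} twice: once to the full-rank $m\times n$ matrix $J_{\lambda^{(i)}}$ to write $\mu(Pa(J_{\lambda^{(i)}}))$ as the product of its $m$ singular values, and once to each full-rank $(m-1)\times n$ matrix $J_{k,\lambda^{(i)}}$ to write $\mu(Pa(J_{k,\lambda^{(i)}}))$ as the product of its $m-1$ singular values. Together with the computable norms $|j_k|$ of the $n$-dimensional rows, this puts every factor on the right-hand side in the claimed form.

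The argument contains no genuine obstacle; the only point requiring care is the extension of the fundamental decomposition from $j_1$ to an arbitrary $j_k$, which rests entirely on the permutation invariance of $\mu(Pa(\cdot))$, and the verification that the GD hypothesis forces each deleted-row matrix $J_{k,\lambda^{(i)}}$ to retain full rank $m-1$ so that the denominators never vanish. Once these two facts are noted, the corollary follows immediately by substitution into Eqs.~(\ref{Eq:skew}) and~(\ref{Eq:Skew}) and a single appeal to Lemma~\ref{Lemma:prod_singvals}.
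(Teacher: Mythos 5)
Your proposal is correct and follows essentially the same route as the paper's proof: substitute $|j_k^{\perp}| = \mu(Pa(J_{\lambda^{(i)}}))/\mu(Pa(J_{k,\lambda^{(i)}}))$ into the definition of local skewness, take the maximum over $k$, and invoke Lemma~\ref{Lemma:prod_singvals} to express both parallelepiped measures as products of singular values. The only difference is that you explicitly justify the two steps the paper leaves implicit --- extending the fundamental decomposition from $j_1$ to an arbitrary row $j_k$ via permutation invariance of $\mu(Pa(\cdot))$, and noting that the GD hypothesis keeps every $J_{k,\lambda^{(i)}}$ full rank so no denominator vanishes --- which is a welcome tightening rather than a different argument.
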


\begin{proof}
	\begin{equation}
		S_Q(J_{\lambda^{(i)}}) = \max_k \, S_Q(J_{\lambda^{(i)}}, j_k) = \max_k \, \frac{|j_k|}{|j_k^{\perp}|} = \max_k \, \frac{|j_k|\mu(Pa(J_{k,\lambda^{(i)}}))}{\mu(Pa(J_{\lambda^{(i)}}))},
	\end{equation}
	then applying Lemma~\ref{Lemma:prod_singvals} we have
	\begin{equation}
		\max_k \frac{|j_k|\mu(Pa(J_{k,\lambda^{(i)}}))}{\mu(Pa(J_{\lambda^{(i)}}))} = \max_k \, \frac{|j_k|\prod_{r=1}^{m-1}\sigma_{kr}}{\prod_{r=1}^{m}\sigma_r}.
	\end{equation}
	where $\sigma_r$ are the singular values of $J_{\lambda^{(i)}}$ and $\sigma_{kr}$ are the singular values of $J_{k,\lambda^{(i)}}$.
\end{proof}

Corollary \ref{Cor:singular_values} allows us to take advantage of efficient singular value decompositions to algorithmically approximate $S_Q(J_{\lambda^{(i)}})$.  The average skewness is given by
\begin{equation}\label{Eq:skewint}
	\overline{S_Q} = \frac{1}{\mu_{\Lambda}(\Lambda)}\int_{\Lambda} S_Q(\lambda) \, d\mu_{\Lambda},
\end{equation}
and approximated by
\begin{equation}
    \overline{S_Q}\approx\overline{S_{Q,N}} := \frac{1}{N}\sum_{i=1}^{N}S_Q(\lambda^{(i)}).
\end{equation}
\begin{remark}
	Notice the definition of average local skewness is independent of $B\in\mathcal{B_D}$.  This is because $B$ is assumed to be a generalized rectangle, i.e., has perfect skewness properties itself.  This is a reasonable assumption as $B$ is typically defined by the cross product of intervals defining the uncertainty in each QoI.
\end{remark}

\section{\bf Multicriteria Optimization}\label{Sec:Optimizing}

A common problem in designing observation networks is to determine where and when to deploy a finite number of sensors in order to record {\em useful} data related to a physics-based model.
The various possible configurations of sensors defines a family of possible QoI maps. 
Given a family of possible QoI maps, our goal is to determine a particular QoI map resulting in {\em precise} and {\em accurate} numerical approximation of the inverse solution.
Here, we frame the problem of determining such a QoI map  as an optimization problem that simultaneously reduces both $\overline{M_Q}$ and $\overline{S_Q}$.

Let $\mathcal{Q}$ denote the space of all possible sets of QoI, let $Q^{(z)}\in\mathcal{Q}$ represent a set of $m$ QoI, and let $\mathcal{D}^{(z)}$ represent the $m$-dimensional data space defined by $Q^{(z)}$, i.e., $Q^{(z)} : \Lambda\subset\reals^n\goto\mathcal{D}^{(z)}\subset\reals^m$.  
For each $Q^{(z)}$ there is a corresponding generalized rectangle $B^{(z)}\in\mathcal{B}_{\mathcal{D}^{(z)}}$ that represents the uncertainty in a possible recorded datum in $\mathcal{D}^{(z)}$.  

Recall from Section \ref{Sec:Precision_Accuracy} the goals are to reduce $\overline{S_{Q^{(z)}}}$ and $\overline{M_{Q^{(z)}}}$. 
Let $\mathcal{S}\subset\mathbb{R}$ denote the set of all possible values of $\overline{S_{Q^{(z)}}}$ and $\mathcal{M}\subset\mathbb{R}$ denote the set of all possible values of $\overline{M_{Q^{(z)}}}$.  
We then define the metric spaces $(\mathcal{S}, d_{\mathcal{S}})$ and $(\mathcal{M}, d_{\mathcal{M}})$ where
\begin{equation}
	d_{\mathcal{S}}(x, y) = \frac{|x-y|}{1 + |x-y|} \, \text{for all } x,y\in\mathcal{S},
\end{equation}
and
\begin{equation}
	d_{\mathcal{M}}(x, y) = \frac{|x-y|}{1 + |x-y|} \, \text{for all } x,y\in\mathcal{M}.
\end{equation}
We define the metrics this way so that the effect of the scaling of each component is limited in solving the minimization problem defined below. 
Let $Y_\omega$ denote the 2-dimensional product space $\mathcal{S} \times \mathcal{M}$, with metric defined by
\begin{equation}\label{Eq:dist_suppskew}
	d_{Y_\omega}(x, y) = \omega d_{\mathcal{S}}(x_1, y_1) + (1-\omega )d_{\mathcal{M}}(x_2, y_2) \, \text{for all } x,y\in Y_\omega,
\end{equation}
where $\omega\in[0,1]$ determines the relative importance we place on either precision or accuracy.  

\begin{remark}
	The weighting of the $\mu_\Lambda$-measure and the skewness is a current topic of discussion.  When more computational resources are available for solving the model, it is likely we weight the skewness less than the $\mu_\Lambda$-measure.  When we can only afford few samples we weight the skewness more than $\mu_\Lambda$-measure.  This is a topic for future work.
\end{remark}

With this notion of distance on $Y_\omega$ we define the multicriteria optimization problem as finding the solution to,
\begin{equation}\label{eq:optimal_Q}
	\min_{Q^{(z)}\in\mathcal{Q}} d_{Y_\omega}(p, y_z),
\end{equation}
where $p=(1, 0)$ is the {\it ideal point} and $y_z=(\overline{S_{Q^{(z)}}}, \overline{M_{Q^{(z)}}})$.

\subsection{Discrete Optimization}\label{Sec:Discrete}

Suppose $\mathcal{Q}$ is a finite family of possible QoI maps, e.g., as defined by identifying a finite set of $d$ physically possible configurations of $m$ sensors with $d\geq m$. 
The solution to the minimization problem defined by Eq.~\eqref{eq:optimal_Q} can be found by an exhaustive search through the $d \choose m$ possible QoI maps.
This combinatorial problem can clearly become computationally expensive as the number of possible maps gets large. 
However, it is completely straightforward to implement and is embarrassingly parallel. 
In the concluding remarks of Section~\ref{Sec:Conclusion}, we describe some possible future directions to mitigate the cost of solving the optimization problem. 
We provide two numerical examples to clarify this method of determining an optimal QoI map to use in the inverse problem.

\begin{remark}
We note that another equivalent way to frame the discrete optimization problem is to define a ``theoretical'' QoI map $Q:\Lambda\to\mathcal{D}\subset\mathbb{R}^d$ where the goal is to determine the ``practical'' QoI map given by a subset of $m$ components of the map $Q$.
The advantage of this approach from an implementation point of view is that for each sample in $\Lambda$, the model is solved once in order to compute all the possible QoI values.
It also provides an index to each possible QoI making the description of the optimal QoI more straightforward.
Therefore, we use this in the numerical examples below. 
\end{remark}

\subsection{Numerical Examples}\label{Sec:1}

\subsubsection{Linear Example}\label{Sec:Linear}

Let $Q : \reals^2 \goto \reals^3$,
\begin{eqnarray}
    Q = \bcm 0.5 & 0.5 \\
        		 2.5 & 0.5 \\
        		 -0.2 & 0.3  \ecm.
\end{eqnarray}
Notice the rows of $Q$ are pairwise linearly independent, i.e., pairwise GD.  Let $B^{(z)}\in \mathcal{B}_{\mathcal{D}^{(z)}}$ be the rectangle of uncertainty in the data space defined by the $z^{th}$ possible pair of QoI.  For simplicity, we let the uncertainty in each QoI be the same, i.e., $\mu_{\mathcal{D}^{(z)}}(B^{(z)})$ is constant for all $z$.  
Note that because $Q$ is linear we can easily use the exact Jacobian of $Q$.  
Furthermore, we use the exact averages $\overline{M_{Q^{(z)}}}$ and $\overline{S_{Q^{(z)}}}$ not the approximate averages. 
The linearity of $Q$ implies $M_{Q^{(z)}}(\lambda^{(i)})$ and $S_{Q^{(z)}}(\lambda^{(i)})$ are each constant for all $i$, so we need not numerically approximate the integrals in Eqs.~\eqref{Eq:suppint} and \eqref{Eq:skewint}.  
In Section~\ref{Sec:Nonlinear} we consider a nonlinear map and compute approximations of the Jacobian, $\overline{M_{Q^{(z)}}}$, and $\overline{S_{Q^{(z)}}}$.

We see in Figure \ref{Fig:linear_2to3} and Table~\ref{Table:linear_2to3} the optimal choice of pair of QoI to use in the inverse problem for three different optimization problems; minimize $\overline{{M_{Q^{(z)}}}}$, minimize $\overline{{S_{Q^{(z)}}}}$, and minimize Eq.~\eqref{Eq:dist_suppskew} with $\omega = 0.5$.  In the top row of Figure~\ref{Fig:linear_2to3} we see the inverse image of $B^{(z)}$ as the intersection of the red and blue contour events corresponding to the individual components of the possible QoI maps.  
It is visually evident from the top-left plot of Figure~\ref{Fig:linear_2to3} that the pair of QoI that minimizes $\overline{M_{Q^{(z)}}}$ also produces the inverse image with the sharpest corners, i.e., it has the highest skewness.  
In the top-middle plot of Figure~\ref{Fig:linear_2to3} we observe the opposite effect, the pair of QoI that minimizes the skewness $\overline{S_{Q^{(z)}}}$ maximizes $\overline{M_{Q^{(z)}}}$. 
In the top-right plot of Figure~\ref{Fig:linear_2to3}, we observe a pair of QoI that produces reasonably low $\overline{S_{Q^{(z)}}}$ and $\overline{M_{Q^{(z)}}}$ simultaneously.

\begin{figure}
	\centering
	\begin{minipage}[t]{\textwidth}
	\centering
	\vspace{0pt}
        \includegraphics[width=.3\textwidth]{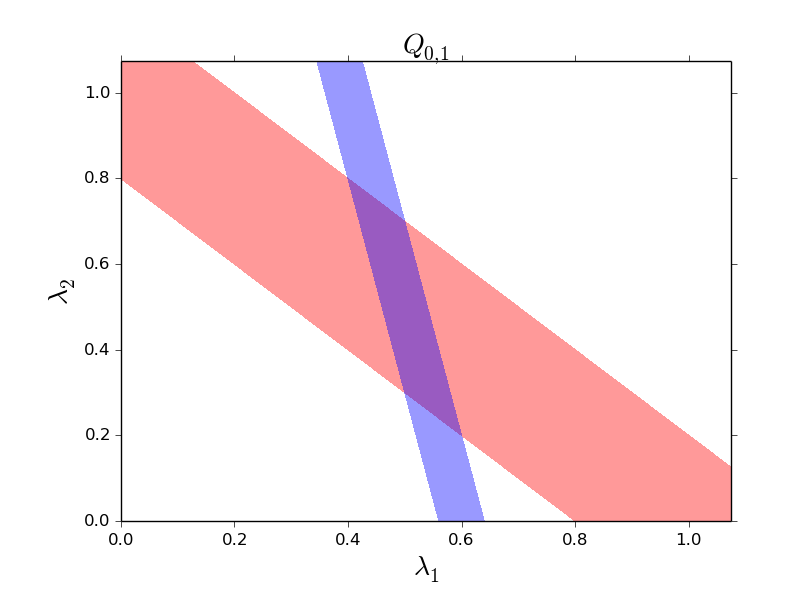}
			\includegraphics[width=.3\textwidth]{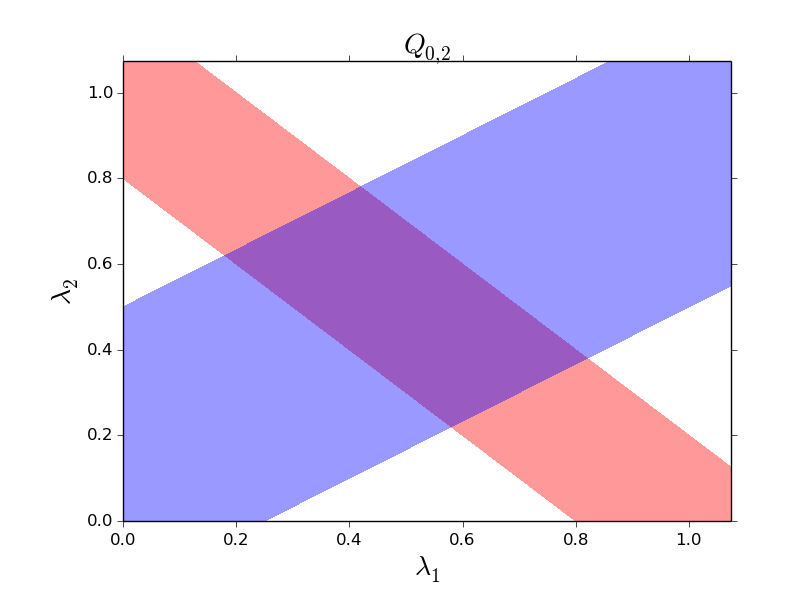}
				\includegraphics[width=.3\textwidth]{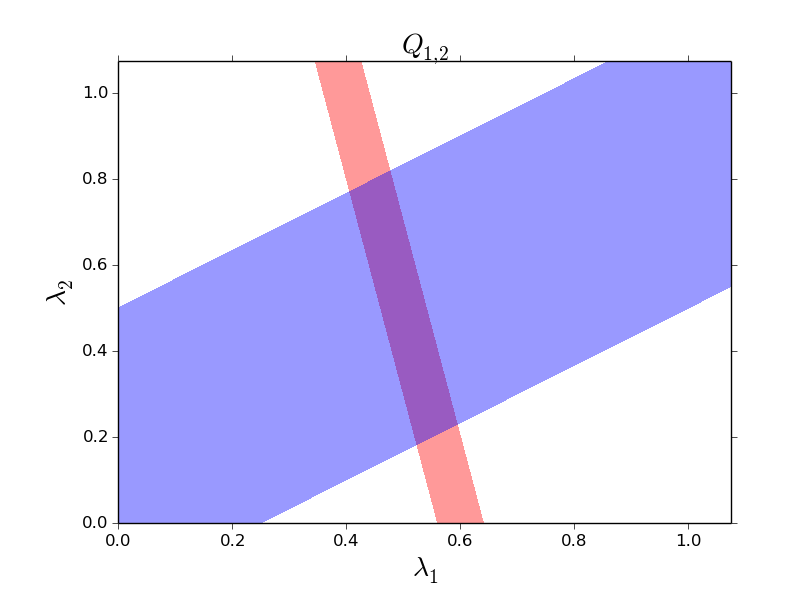}\\
			        
        	\includegraphics[width=.3\textwidth]{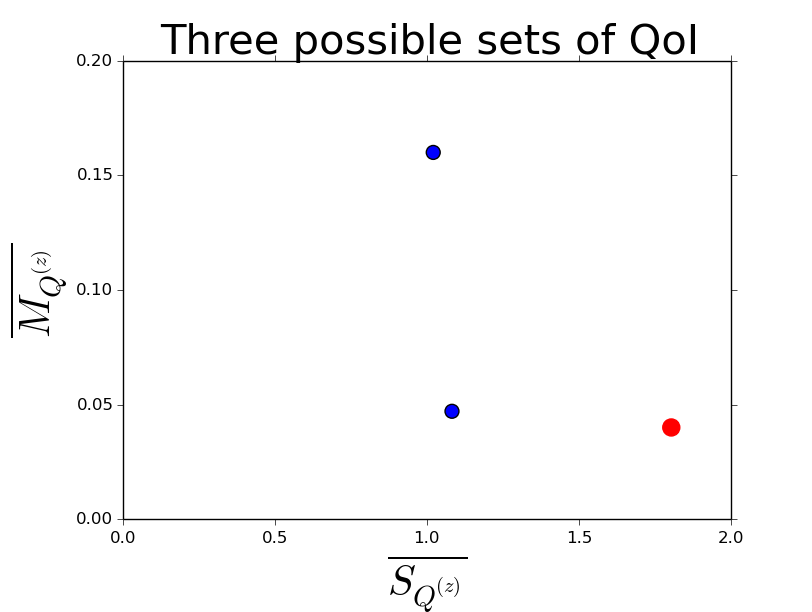}
			\includegraphics[width=.3\textwidth]{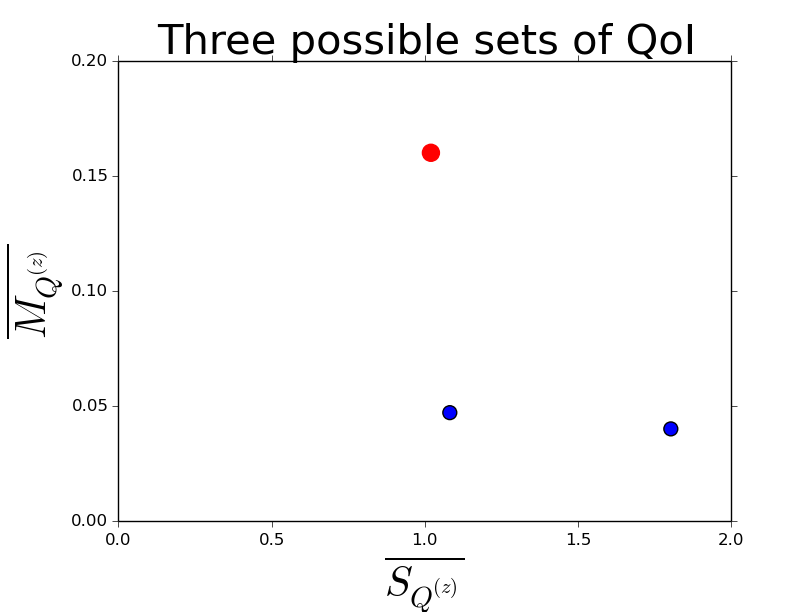}
        			\includegraphics[width=.3\textwidth]{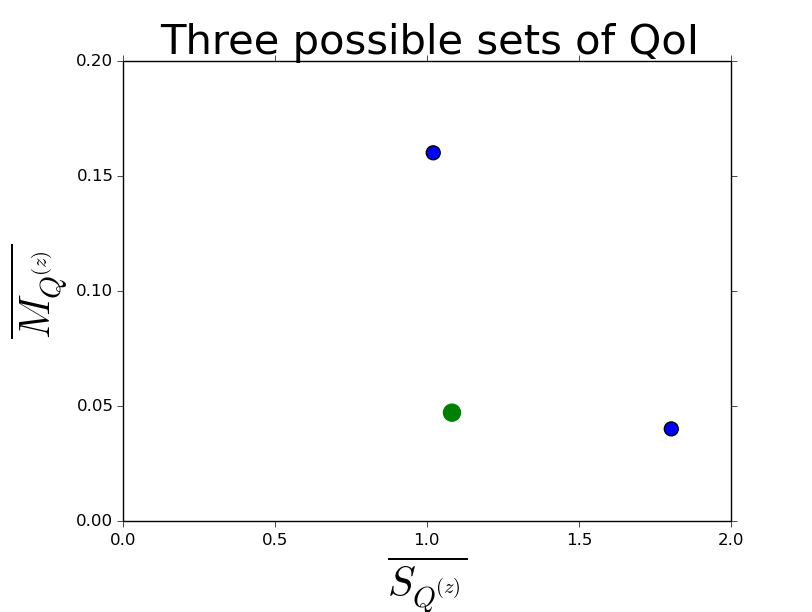}
	\caption{\it On the top we show the inverse density for the three possible pairs of QoI.  On the bottom we show the point in $Y_\omega$ defined by each pair of QoI.  (left): The pair of QoI that minimizes $\overline{M_{Q^{(z)}}}$ ($Q_{0,1}$).  (middle): The pair of QoI that minimizes $\overline{S_{Q^{(z)}}}$ ($Q_{0,2}$).  (right): The pair of QoI that minimizes the distance defined in Eq.(\ref{Eq:dist_suppskew}) with $\omega=0.5$ ($Q_{1,2}$). }\label{Fig:linear_2to3}
	\end{minipage}
	\begin{minipage}[t]{\textwidth}
	\centering
	\vspace{10pt}
	\begin{tabular}{rrrr}
  		Pair & $\overline{M_{Q^{(z)}}}$ & $\overline{S_{Q^{(z)}}}$ & $d_{Y_\omega}(p, y_z)$ \\ \hline \hline
		\vspace{2pt}
  		$Q_{0,1}$ &  $4.0E-2$ & $1.80E+0$ & $4.84E-1$\\
		\vspace{2pt}
		$Q_{0,2}$ & $1.6E-2$ & $1.02E+0$ & $1.57E-1$\\
		\vspace{2pt}
		$Q_{1,2}$ & $4.7E-2$  & $1.08E+0$ & $1.20E-1$\\
	\end{tabular}
	\captionof{table}{\it $\overline{M_{Q^{(z)}}}$, $\overline{S_{Q^{(z)}}}$, and $d_{Y_\omega}(p, y_z)$ for each of the three possible pairs of QoI.}
	\label{Table:linear_2to3}
\end{minipage}
\end{figure}

\subsubsection{Nonlinear}\label{Sec:Nonlinear}

\begin{figure}
	\centering
	\begin{minipage}[t]{\textwidth}
	\centering
	\vspace{0pt}
        \includegraphics[width=.24\textwidth]{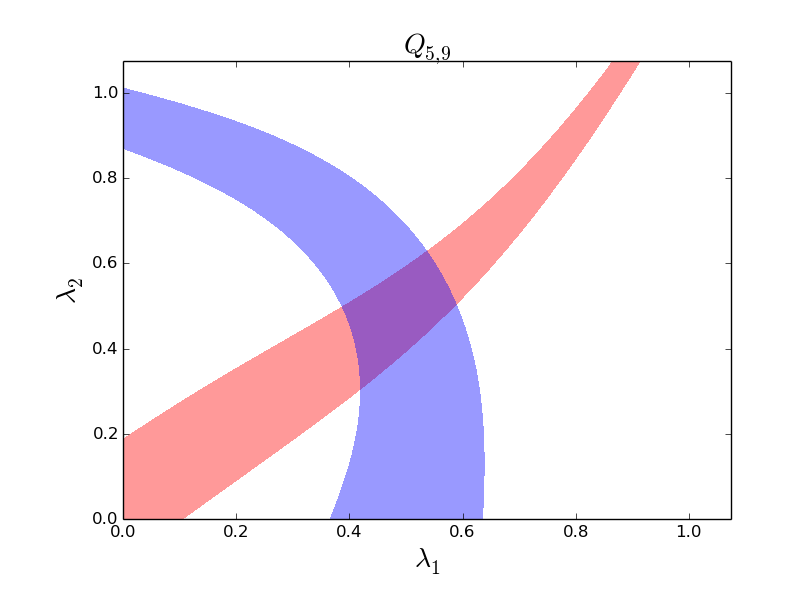}
        		\includegraphics[width=.24\textwidth]{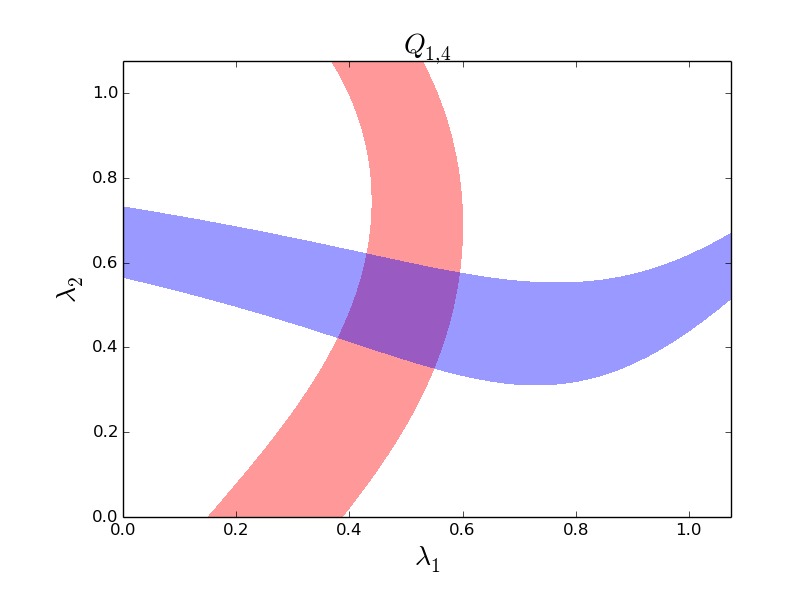}
        			\includegraphics[width=.24\textwidth]{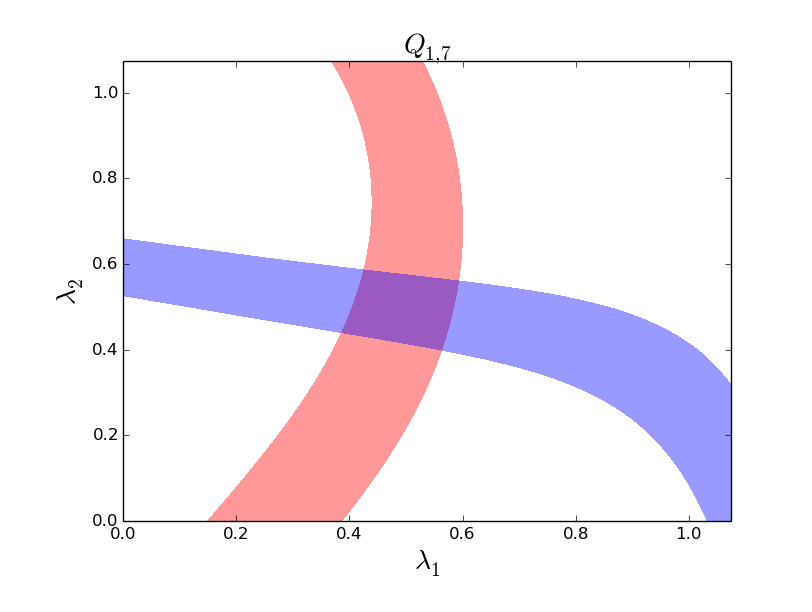}
        				\includegraphics[width=.24\textwidth]{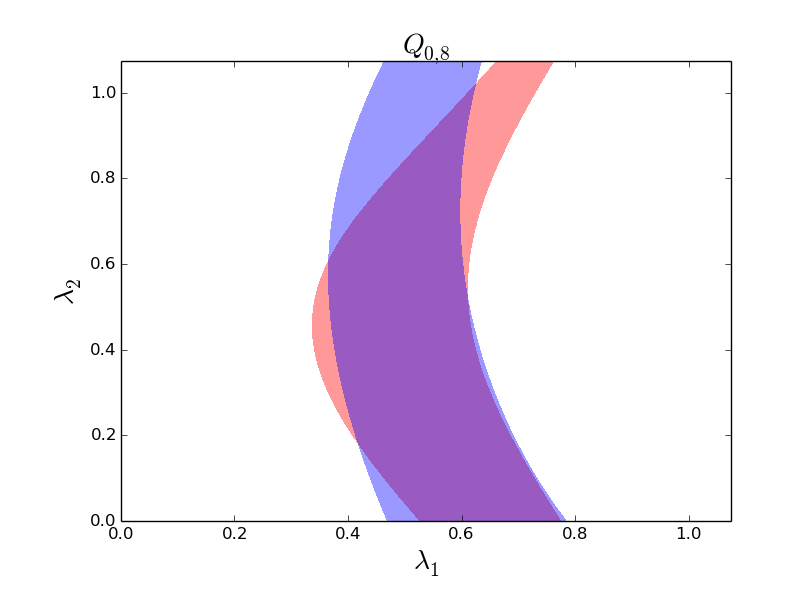}\\
        				
        	\includegraphics[width=.24\textwidth]{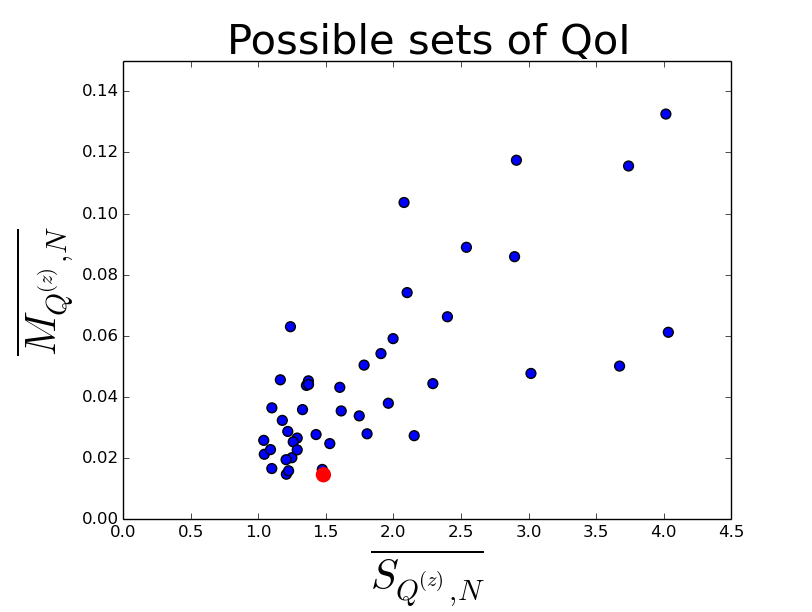}
			\includegraphics[width=.24\textwidth]{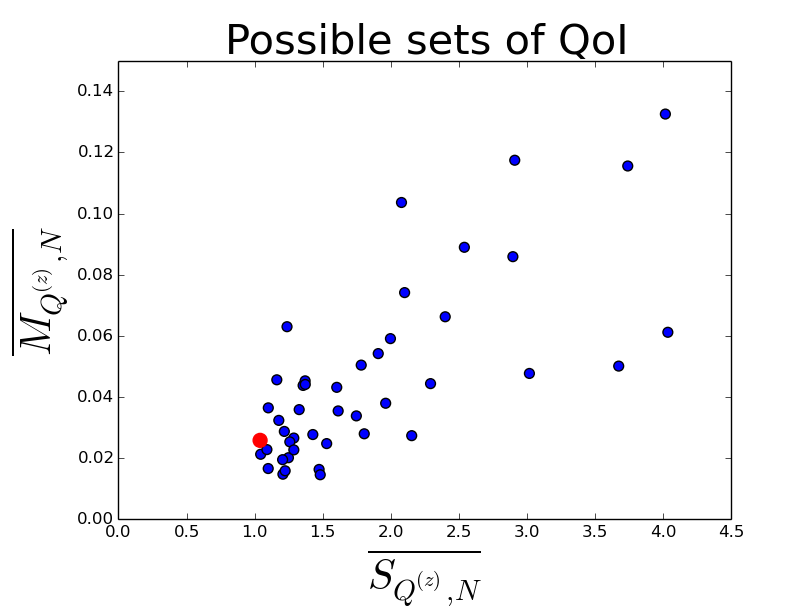}
        			\includegraphics[width=.24\textwidth]{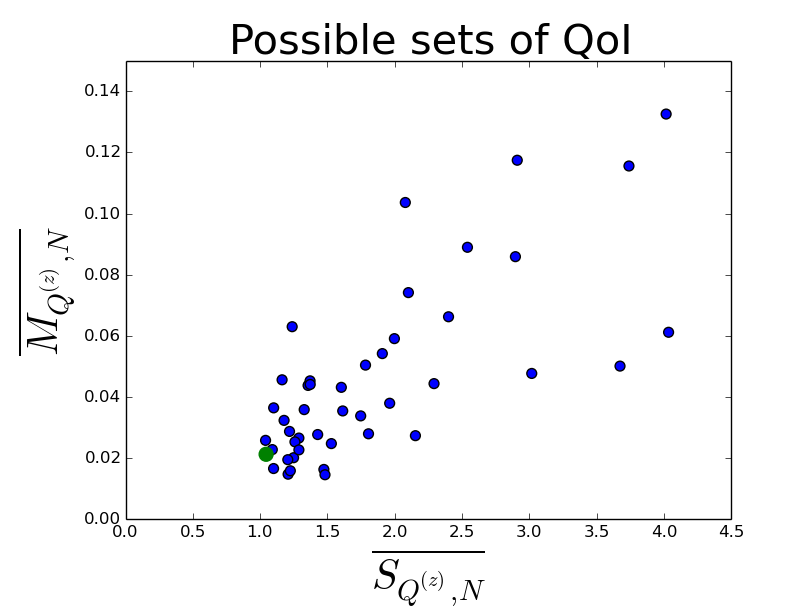}
        				\includegraphics[width=.24\textwidth]{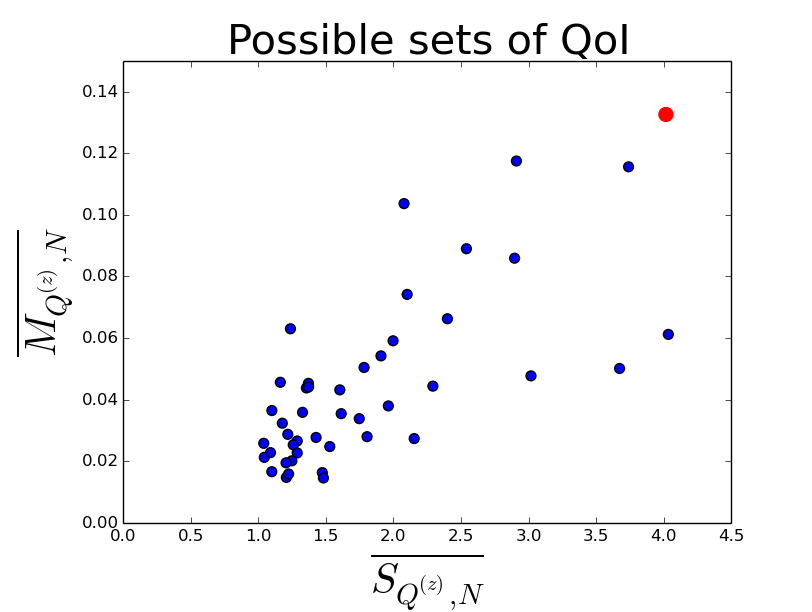}
	\caption{\it On the top we show the inverse image defined by four possible pairs of QoI.  On the bottom we show the point in $Y_\omega$ defined by each pair of QoI.  (left): The pair of QoI that minimizes $\overline{M_{Q^{(z)},N}}$ ($Q_{5,9}$).  (middle-left): The pair of QoI that minimizes $\overline{S_{Q^{(z)},N}}$ ($Q_{1,4}$).  (middle-right): The pair of QoI that minimizes the distance defined in Eq.(\ref{Eq:dist_suppskew}) with $\omega=0.5$ ($Q_{1,7}$).  (right): The pair of QoI that maximizes the distance defined in Eq.(\ref{Eq:dist_suppskew}) with $\omega=0.5$ ($Q_{0,8}$).}\label{Fig:nonlinear_2to10}
	\end{minipage}
	\begin{minipage}[t]{\textwidth}
	\centering
	\vspace{10pt}
	\begin{tabular}{rrrr}
  		Pair & $\overline{M_{Q^{(z)},N}}$ & $\overline{S_{Q^{(z)},N}}$ & $d_{Y_\omega}(p, y_z)$ \\ \hline \hline
		\vspace{2pt}
  		$Q_{5,9}$ &  $1.50E-2$ & $1.482E+0$ & $3.40E-1$\\
		\vspace{2pt}
		$Q_{1,4}$ & $2.60E-2$ & $1.040E+0$ & $6.40E-2$\\
		\vspace{2pt}
		$Q_{1,7}$ & $2.10E-2$  & $1.044E+0$ & $6.30E-2$\\
		\vspace{2pt}
		$Q_{0,8}$ & $1.33E-1$  & $4.016E+0$ & $8.68E-1$\\
	\end{tabular}
	\captionof{table}{\it $\overline{M_{Q^{(z)},N}}$, $\overline{S_{Q^{(z)},N}}$, and $d_{Y_\omega}(p, y_z)$ for the four pairs of QoI considered in Figure~\ref{Fig:nonlinear_2to10}.}
	\label{Table:nonlinear_2to10}
\end{minipage}
\end{figure}

Let $Q : \reals^2 \goto \reals^{10}$ be {\em nonlinear} where each component is a polynomial of the form
\begin{eqnarray*}
	Q_j = r_{j,0}\lambda_1^5 + r_{j,1}\lambda_2^3 + r_{j,2}\lambda_1^3\lambda_2 + r_{j,3}\lambda_1 + r_{j,4}\lambda_2 + r_{j,5} \, \, \text{for}\, \, j=0,1,\hdots,9,
\end{eqnarray*}
where the coefficients $r_{j,k}$ are fixed random numbers in $[-1,1]$.  Let $B^{(z)}\in \mathcal{B}_{\mathcal{D}^{(z)}}$ be the rectangle of uncertainty in the data space defined by the $z^{th}$ possible pair of QoI.  
Again, we assume the uncertainty in each QoI is the same so that $\mu(B^{(z)})$ is constant for all $z$.  
Now that $Q$ is nonlinear, we approximate both the local Jacobians $J_{\lambda^{(i)}}$ and the integrals in Eqs.~\eqref{Eq:suppint} and \eqref{Eq:skewint}.  
We take 1000 uniform random samples $\lambda^{(i)}$ in $\Lambda$ and then compute the approximate Jacobian at $N=100$ of these samples using a Radial Basis Function (RBF) interpolation method.  
Specifically, for a given $\lambda^{(i)}\in\Lambda$, we use the 20 nearest neighbors of $\lambda^{(i)}$ (from the original 1000 samples) to approximate the gradient for each QoI and subsequently construct the Jacobian.

\begin{remark}
	Because $Q$ is a vector valued polynomial function, we could analytically determine the exact gradient vectors for each component and simply evalute at each $\lambda^{(i)}$.  However, the RBF method (or any finite difference method) is a more general approach that can be used to approximate gradient vectors for maps $Q$ defined by functionals of solutions to partial differential equations where we cannot analytically determine the gradients of each component.
\end{remark}

We illustrate the various parts of the optimization problem by separately solving four optimization problems; minimize $\overline{{M_{Q^{(z)},N}}}$, minimize $\overline{{S_{Q^{(z)},N}}}$, minimize Equation~\ref{Eq:dist_suppskew} with $\omega= 0.5$, and maximize Equation~\ref{Eq:dist_suppskew} with $\omega=0.5$.  
The results are summarized in Figure~\ref{Fig:nonlinear_2to10} and Table~\ref{Table:nonlinear_2to10}.  
In the bottom left of Figure~\ref{Fig:nonlinear_2to10} we see the location in $Y_\omega$ of $s_{5,9}=(\overline{S_{Q_{5,9},N}}, \overline{M_{Q_{5,9},N}})$.  Although this pair of QoI minimizes $\overline{M_{Q^{(z)},N}}$ it is obvious from the scatter plot there are pairs of QoI that have similar average $\mu_\Lambda$-measure that have much smaller average skewness.  
In the bottom right we see the location in $Y_\omega$ of $s_{0,8}$ that corresponds to the pair of QoI that maximizes the sum of the average $\mu_\Lambda$-measure and the average skewness.  
The corresponding inverse image, seen in the top right of Figure~\ref{Fig:nonlinear_2to10}, clearly has both the largest measure of support and the largest skewness of the four pairs considered.

\section{\bf Maps Defined by the Solution of a Physics-Based Model}\label{Sec:Map_Defined}

In this section we consider the inverse density of sets under functions defined by the solutions to partial differential equations.  
We first consider a simple time dependent diffusion model with uncertain diffusion coefficient that allows us to understand the results intuitively.
Then we consider the ADvaned CIRCulation model for oceanic, coastal, and estuarine waters (ADCIRC) which incorporates a spatially varying bottom friction model.  
The ADCIRC model depends on many parameter fields, we focus on the bottom friction parameter due to its inherent uncertainty.

\subsection{Time Dependent Diffusion}\label{Sec:heatplate}

\subsubsection{The Model}\label{Sec:The_Model}

We consider the time dependent diffusion equation
\vskip 10pt
\begin{equation*}
    \rho c\frac{\partial T}{\partial t} = \nabla \cdot (\kappa \nabla T) + f, \hskip 10pt x \in \Omega,  t \in (t_0, t_f)
\end{equation*}
\vskip 10pt
\begin{eqnarray*}
    f(x) = 
    \begin{cases}
    Ae^{-\frac{(x_0-p_0)^2+(x_1-p_1)^2}{w}} & \text{if } t \leq t_{source} \\
    0       & \text{if } t > t_{source}
    \end{cases}
\end{eqnarray*}
\begin{eqnarray*}
    T(x;0) = 0 \\
    \frac{\partial T}{\partial n} = 0 ,  x \in \partial \Omega
\end{eqnarray*}
where $\Omega = [-\frac{1}{2}, \frac{1}{2}] \times [-\frac{1}{2}, \frac{1}{2}]$ represents a thin plate made of some alloys, $\rho$ is the density of the plate, $c$ is the heat capacity, and $\kappa$ is the thermal conductivity.  The forcing function $f$ represents an external source with the following source parameters: $A$ is the amplitude, $\bar p$ is the position, and $w$ is the width.  The external source is positioned at the center of the plate.  We remove the external source at time $t_{source}=\frac{t_f}{2}$.  The homogeneous Neumann boundary conditions model perfect insulation around the boundary.

We assume that the square plates are manufactured by welding together two rectangular plates of the same alloy.
However, due to imperfections in the original manufacturing process, we assume that the alloy composition varies significantly from plate to plate.
Thus, we let the thermal conductivity $\kappa$ vary in space as described below. 
We assume that the left-half of $\Omega$ contains an alloy with thermal conductivity $\kappa_0$ and the right-half of $\Omega$ contains an alloy with thermal conductivity $\kappa_1$ on the right, both of which are uncertain within $[0.01, 0.2]$..  
Thus, the model parameters for this problem are $\kappa_0$ and $\kappa_1$ and we have defined our parameter space, $\Lambda = [0.01,0.2] \times [0.01,0.2] \subset \mathbb{R}^2$.
See Figure~\ref{Fig:qoi_locations} where the green and purple colors indicate the two rectangular plates that are welded together to form the single plate defining the physical domain $\Omega$

\subsubsection{The Inverse Problem}\label{Sec:The_Inverse_Problem_heatplate}

Our goal is to determine which pair of QoI produce the best solution to the inverse problem, i.e., determine which two points in space time we should record temperature measurements in order to obtain the ``best'' inferences about the thermal conductivities of each side of the plate.  
Intuitively, we expect to take one temperature measurement on each side of the plate, but it is not clear at what time steps each temperature measurement should be recorded or exactly where we should place the sensors on each side of the plate.  
We limit ourselves to a discrete set of possible temperature measurements in space-time.  
We choose 20 points on the plate in space, see Figure \ref{Fig:qoi_locations}, and gather these temperature measurements at each of 20 time steps.  
Thus, $\mathcal{Q}$ is defned by 400 possible QoI values in space-time for which it is only possible to record two.

To make inferences about the optimal experimental design we must solve the model for a set of samples in the parameter space.  
For the numerical solution of the model,  we use the state of the art open source finite element software FEniCS \cite{AlnaesBlechta2015a} with a $40 \times 40$  triangular mesh, piecewise linear finite elements and a Crank Nicolson time stepping scheme.  We take $5000$ uniform random samples from $\Lambda$ and run the simulation for each sample.  
We mathematically model the sensors recording data at a point in space-time as a functional of the solution by computing the average temperature on a small disc about a point in space but at a fixed time, i.e., each QoI is modeled as
\begin{eqnarray*}
    Q_{(p_i;t_i)} = \frac{1}{\mu_{\Omega}(\mathbf{B}_r(p_i))}\int_{\Omega}T(x;t_i)\chi_{\mathbf{B}_r(p_i)} \, dx. \\
\end{eqnarray*}
This yields a map $Q: \Lambda \to \mathcal{D}$ where $\Lambda$ is 2-dimensional and $\mathcal{D}$ is 400-dimensional.  
Thus, we have $400 \choose 2$ $=79,800$ possible pairs of QoI to consider.
\begin{figure}
    \begin{center}
        \textbf{QoI Locations}\par\medskip
        \includegraphics[width=.53\textwidth]{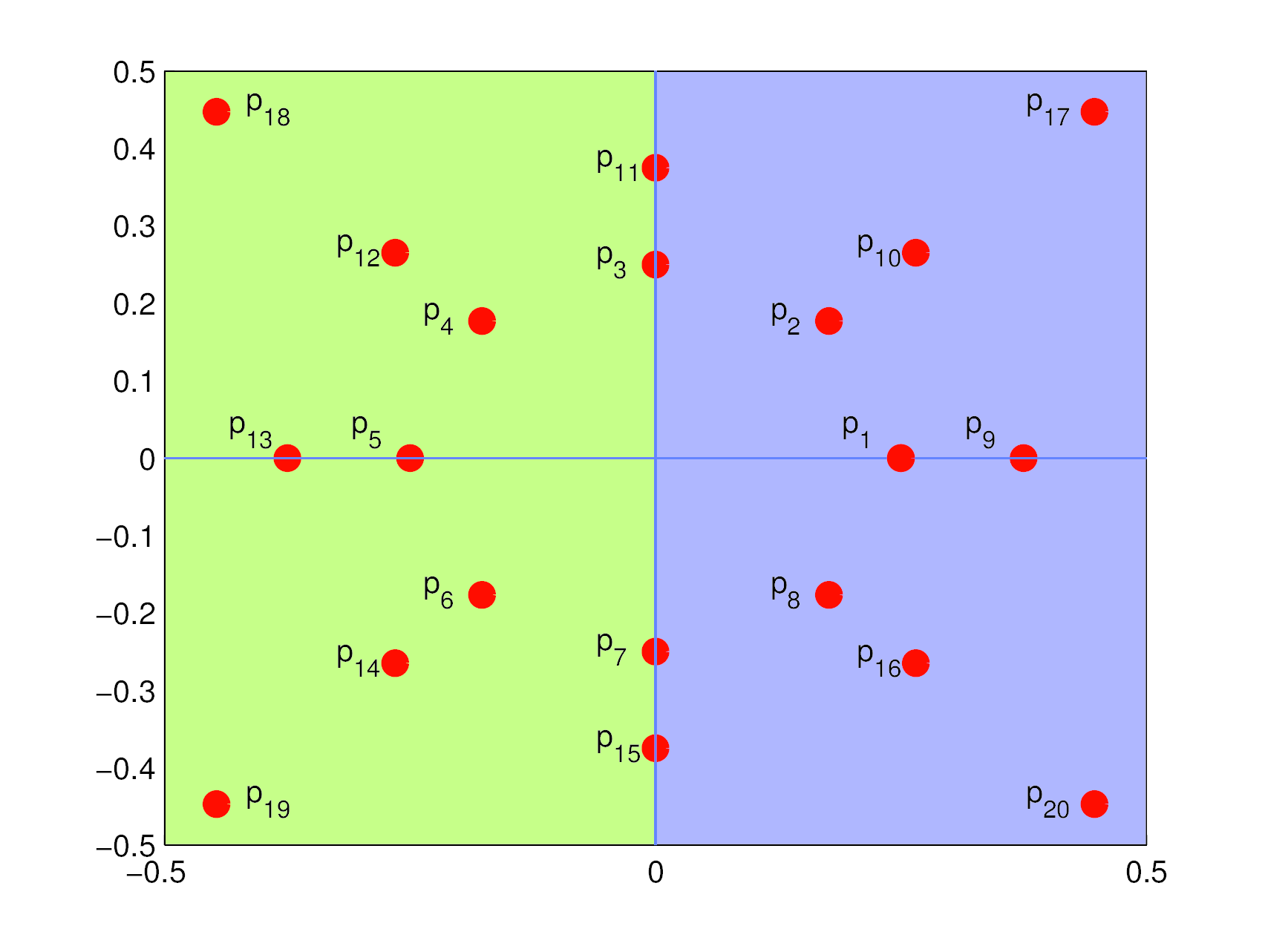}
        \caption{\it The QoI locations in the domain $\Omega$.}\label{Fig:qoi_locations}
    \end{center}
\end{figure}

We approximate $\overline{M_{Q^{(z)}}}$ and $\overline{S_{Q^{(z)}}}$, for each $z$, using the $N=5000$ samples for which we solved the model.  We solve three optimization problems; minimize $\overline{M_{Q^{(z)},N}}$, minimize $\overline{S_{Q^{(z)},N}}$, and minimize the distance defined in Eq.~(\ref{Eq:dist_suppskew}) with $\omega=0.5$.  
Recall these minimization problems tell us {\em on average} the best possible set of QoI to use in the inverse problem before any physical experiment occurs.

The results of the optimization problems are shown in Table~\ref{Table:heatplate}.  
As expected, each pair of QoI found has one point on the left side of the plate and one point on the right.  
However, there are notable differences in space-time locations of each optimal pair of QoI.  
The pair of QoI that minimizes $\overline{M_{Q^{(z)},N}}$ is produced by measurements made near the end of the simulation.  
The pair of QoI that minimizes $\overline{S_{Q^{(z)},N}}$ is produced by measurements made at the first time step. 
 Intuitively, this makes sense because at $t_1$ the temperature measurements are primarily determined by local alloy properties, i.e., not enough time has passed for the left side of the plate to influence the temperature measurements on the right side of the plate and vice versa.

We solve the inverse problem three times, once for each pair of QoI described above.  
We simulate real data, i.e., a temperature measurement made on a given plate with uncertain thermal conductivities in a laboratory experiement, by mapping a random point $\lambda_{ref}\in\Lambda$ forward to $Q^{(z)}(\lambda_{ref})=q^{(z)}_{ref}\in\mathcal{D}^{(z)}$.  
We assume the uncertainty in this {\em observed} datum is the same for each QoI and let this uncertainty be $0.5$ degrees, i.e., the 2-dimensional square we invert for each pair of QoI is centered at $q^{(z)}_{ref}$ and has side length of $0.5$.  
We solve the inverse problem using the open source BET \cite{BET} python package for each of the resulting pairs of QoI, see Figure \ref{Fig:heatplate_inverse}.

In the top row we see in purple the inverse image for each pair of QoI approximated with $N=5000$ uniform samples.  
Notice in the top middle, the pair that minimizes the average skewness, the inverse image is the entire parameter space, i.e., we have not reduced the uncertainty in our model input parameters at all.  
Although this inverse image can be well approximated by relatively few samples, it is useless as it gives no new information about the location of the parameters that produced the observed datum.  
In the bottom row we show the location of $y_z\in Y_\omega$ for each pair of QoI.  
In the bottom right, notice the apparent smooth curve being defined by these these points in $Y_\omega$.  
In multicriteria optimization this is referred to as the Pareto frontier, and we see in all three cases our optimal choice of pair of QoI produces a point in $Y_\omega$ that lies very near the Pareto frontier.

\begin{figure}
	\centering
	\begin{minipage}[t]{\textwidth}
	\centering
	\vspace{0pt}
        \includegraphics[width=.3\textwidth]{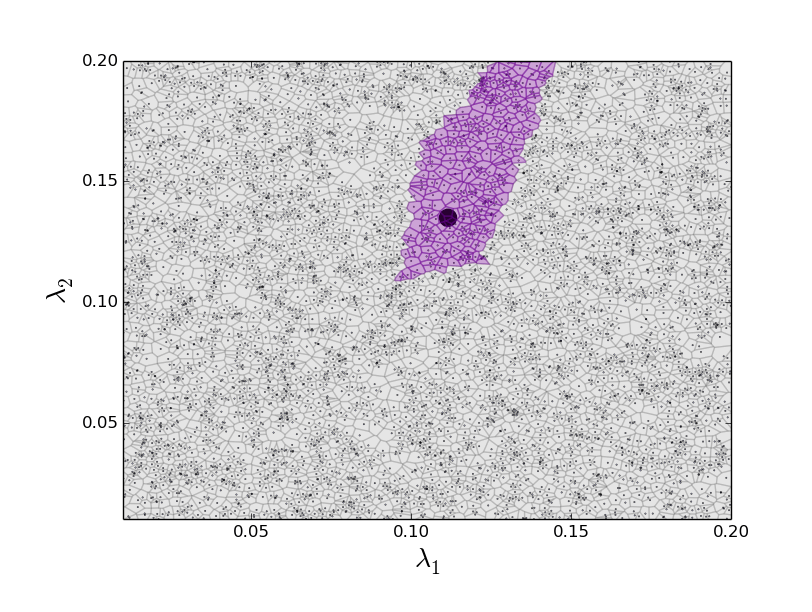}
        		\includegraphics[width=.3\textwidth]{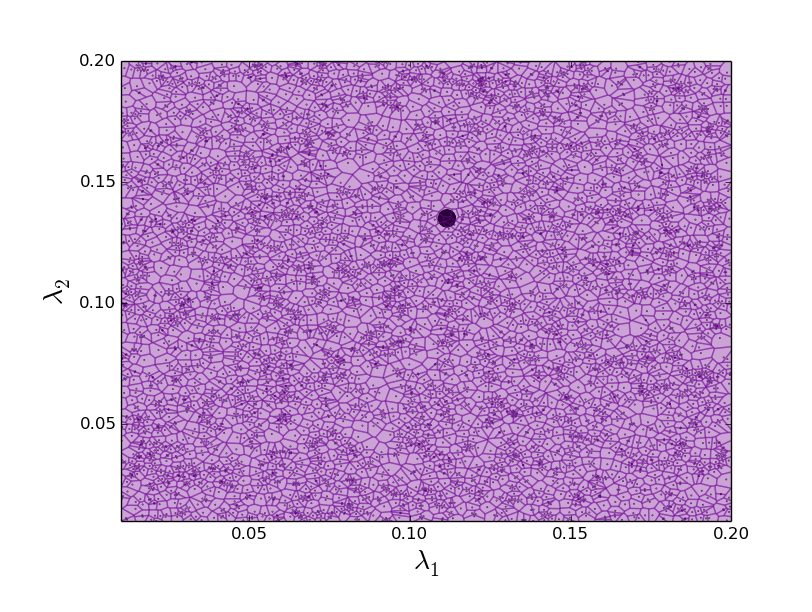}
        				\includegraphics[width=.3\textwidth]{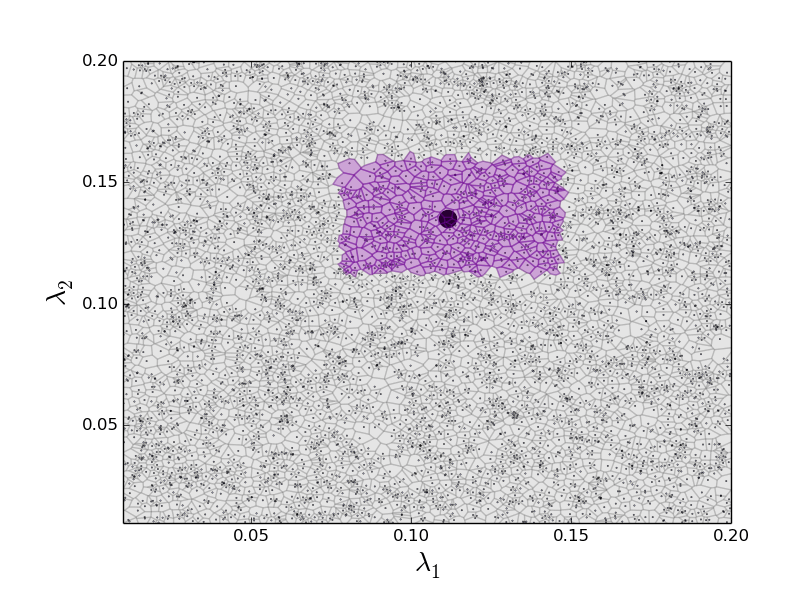}\\

        	\includegraphics[width=.3\textwidth]{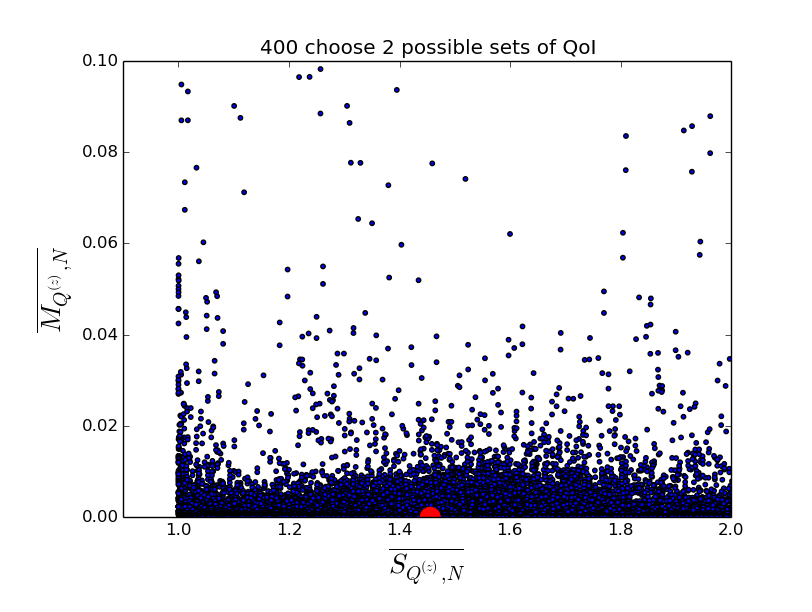}
			\includegraphics[width=.3\textwidth]{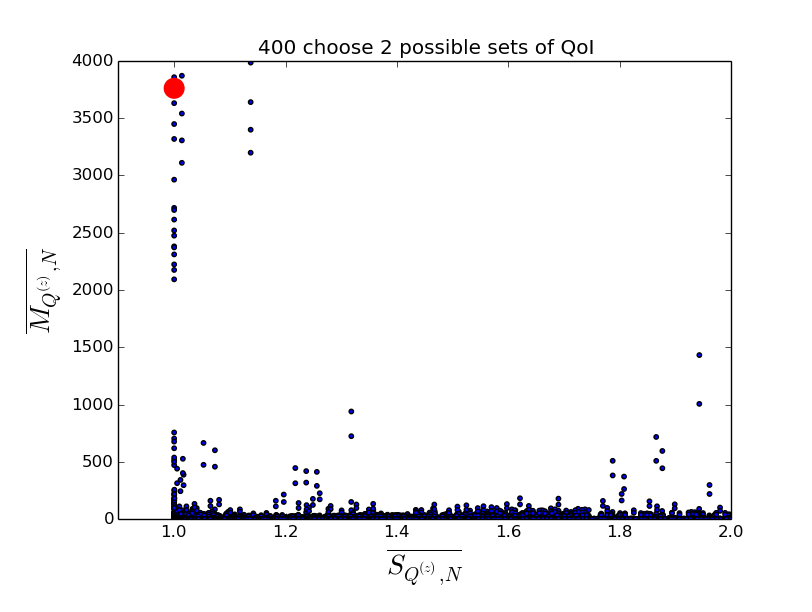}
        			\includegraphics[width=.3\textwidth]{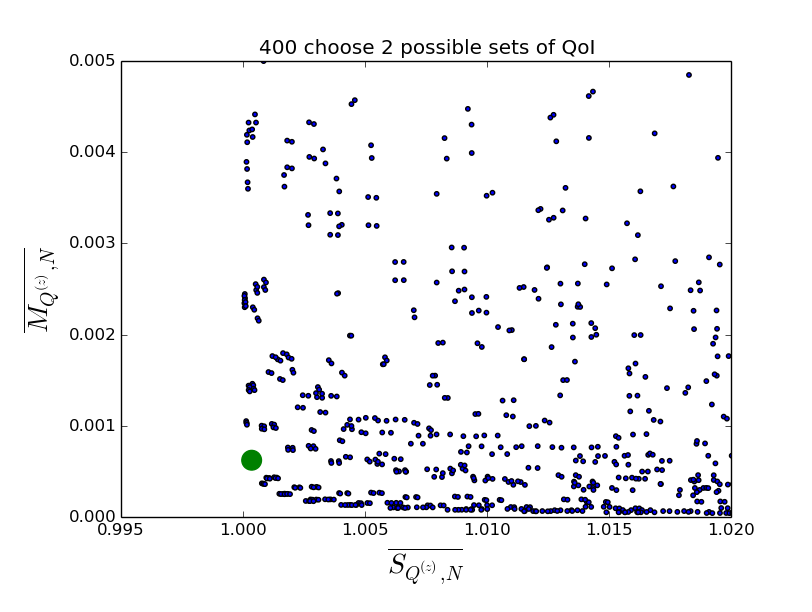}
	\caption{\it (top): The inverse image approximated using the BET package.  (bottom): The space $Y_\omega$ defined in Section \ref{Sec:Optimizing}.  (left): The pair of QoI that minimizes $\overline{M_{Q^{(z)},N}}$.  (middle) : The pair of QoI that minimizes $\overline{S_{Q^{(z)},N}}$.  (right): The pair of QoI that minimizes the distance defined in Eq.(\ref{Eq:dist_suppskew}) with $\omega=0.5$.}\label{Fig:heatplate_inverse}
	\end{minipage}
	\begin{minipage}[t]{\textwidth}
	\centering
	\vspace{10pt}
	\begin{tabular}{rrrr}
	    QoI Locations & $\overline{M_{Q^{(z)},N}}$ & $\overline{S_{Q^{(z)},N}}$ & $d_{Y_\omega}(p, y_z)$ \\ \hline \hline
	
	    $Q_{(p_{18}, t_{14})}, Q_{(p_{17}, t_{15})}$  &  $1.72E-05$ & $1.46E+00$ & $3.31E-01$ \\
	
		$Q_{(p_{1}, t_{1})}, Q_{(p_{12}, t_{1})}$ & $3.76E+03$ & $1.00E+00$ & $9.99E-01$ \\
	
		$Q_{(p_{17}, t_{4})}, Q_{(p_{18}, t_{4})}$ & $6.26E-04$ & $1.0004E+00$ & $9.84E-04$ \\
	
	\end{tabular}
	\captionof{table}{\it $\overline{M_{Q^{(z)},N}}$, $\overline{S_{Q^{(z)},N}}$, and $d_{Y_\omega}(p, y_z)$ for the three pairs of QoI considered in Figure~\ref{Fig:heatplate_inverse}.}
	\label{Table:heatplate}
\end{minipage}
\end{figure}

\subsubsection{The Prediction Problem}\label{Sec:The_Prediction_Problem_heatplate}

As mentioned in Section~\ref{Sec:Precision_Accuracy}, following solution to the stochastic inverse problem, we generally want to make, and quantify uncertainty in, predictions that can be formulated as solutions to a stochastic forward problem.   
Typically these predictions correspond to unobservable QoI, e.g., the maximum storm surge elevation for a future hurricane being forced by currently unknown winds and tides.  
For simplicity, we consider a prediction problem for a similar QoI in the heating of a thin plate as described above.  
We denote the prediction QoI as $Q^{(p)}$ which represents the average temperature along the right side of the plate  at the final time step with the external source now located at the bottom left of the plate.
\begin{eqnarray*}
	Q^{(p)} = \frac{1}{\mu_{\Omega}(E)}\int_{\Omega}T(x;t_{20})\chi_{E} \, dx,\\
    f(x) = 
    \begin{cases}
    Ae^{-\frac{(x_0 + \frac{1}{2})^2 + (x_1 + \frac{1}{2})^2}{w}} & \text{if } t \leq t_{source} \\
    0       & \text{if } t > t_{source}.
    \end{cases}
\end{eqnarray*}
Here, $E$ is a small vertical strip in $\Omega$ along the right boundary.  
For each inverse solution found in Section~\ref{Sec:The_Inverse_Problem_heatplate} we have a corresponding set of samples from the original $N=5000$ that lie inside the inverse density.  
For each of these samples, we run the simulation and evaluate $Q^{(p)}$. 
The range of the results for each of the three inverse images is shown in Table~\ref{Table:prediction}.  
In the first column we give the pair of QoI used to solve the stochastic inverse problem.  
In the second column we show the relative measure of the support of the probability measure solving the stochastic inverse problem.  
In the third column we show the interval of possible prediction values for $Q^{(p)}$.  
We see the pair of QoI that minimizes the sum of the average $\mu_\Lambda$-measure and the average skewness (last row) produces a prediction interval about 5 times smaller than propagating the entire parameter space forward (second row).
Also, this pair of QoI produces a prediction interval of similar precision (as determined by length of the interval) to the prediction interval corresponding to the first row, which we would expect to be the best but perhaps the most computationally complex to use. 

\begin{table}
\centering
\begin{tabular}{ccc}
    QoI Locations & $\frac{\mu_\Lambda(\supp((Q^{(z)})^{-1}(B)))}{\mu_\Lambda(\Lambda)}$ & Prediction Interval  \\ \hline \hline

    $Q_{(p_{18}, t_{14})}, Q_{(p_{17}, t_{15})}$  & $6.68E-2$  & [18.3, 24.0] \\

	$Q_{(p_{1}, t_{1})}, Q_{(p_{12}, t_{1})}$ & $1.00E+0$ & [0.06, 25.3] \\

	$Q_{(p_{17}, t_{4})}, Q_{(p_{18}, t_{4})}$ & $8.62E-2$ & [16.9, 23.1] \\

\end{tabular}
\captionof{table}{\it Predictions made by propagating forward each of the three inverse images in Figure~\ref{Fig:heatplate_inverse}.}
\label{Table:prediction}
\end{table}

\subsection{Coastal Ocean Model}\label{Sec:Costal_Ocean}

\subsubsection{The Model}\label{Sec:The_Model_adcirc}

Coastal ocean models numerically solve the shallow water equations (SWEs), which model the flow of water processes on domains with vertical length scales that are negligible relative to the horizontal length scales.  Integrating out the depth results in a first-order hyperbolic continuity equation coupled to the momentum equations for horizontal depth-averaged velocities given by
\begin{eqnarray}
	\frac{\partial H}{\partial t} + \frac{\partial}{\partial x}(Q_x) + \frac{\partial}{\partial y}(Q_y) &=& 0, \label{Eq:continuity} \\ 
	\frac{\partial Q_y}{\partial t} + \frac{\partial UQ_x}{\partial x} + \frac{\partial VQ_x}{\partial y} - fQ_x &=&\nonumber\\
	-gH\frac{\partial(\zeta + P_s/g\rho_0-\alpha\eta)}{\partial x} &+& \frac{\tau_{sx}}{\rho_0} - \frac{\tau_{bx}}{\rho_0} + M_x - D_x - B_x, \label{Eq:mom1} \\ 
	\frac{\partial Q_x}{\partial t} + \frac{\partial UQ_y}{\partial x} + \frac{\partial VQ_y}{\partial x} - fQ_x &=&\nonumber\\
	-gH\frac{\partial(\zeta + P_s/g\rho_0-\alpha\eta)}{\partial y} &+& \frac{\tau_{sy}}{\rho_0} - \frac{\tau_{by}}{\rho_0} + M_y - D_y - B_y.\label{Eq:mom2}
\end{eqnarray}
Here, $\zeta$ is the free surface departure from the geoid, $h$ is the bottom surface departure from the geoid, $H=\zeta + h$ is the total water column height, $U_i$ is the depth-averaged velocity in the $x_i$ direction, $f$ is the Coriolis parameter, $P_s$ is the atmospheric pressure at the free surface, $\rho_0$ is the reference density of water, $\alpha$ is the effective earth elasticity factor, $\eta$ is the Newtonian equilibrium tide potential, $\tau_{sx_i}$ are the imposed surface stresses, $\tau_{bx_i}$ are the bottom stress components, $M_{x_i}$ is the vertically-integrated lateral stress gradient, $D_{x_i}$ is the momentum dispersion, and $B_{x_i}$ is the vertically-integrated baroclinic pressure gradient.

In the ADvaced CIRCulation (ADCIRC) model the continuity equation, Equation~\ref{Eq:continuity}, is replaced by the second order hyperbolic generalized wave continuity equation (GWCE), Eq.~\eqref{Eq:GWCE} \cite{Butler2012b}.  
Together, the GWCE and the momentum equations, Eqs.~\eqref{Eq:mom1} and \eqref{Eq:mom2}, define the modified form of the SWEs solved by ADCIRC.  
The GWCE and momentum equations are discretized in space by a piecewise linear triangular mesh.  
The time stepping is done with centered finite differences for the GWCE and forward finite difference for the momentum equations.

\begin{eqnarray}
	&&\frac{\partial^2\zeta}{\partial t^2} + \tau_0\frac{\partial \zeta}{\partial t} + \frac{\partial}{\partial x}\bigg(-\frac{\partial UQ_x}{\partial x} - \frac{\partial VQ_x}{\partial x} + fQ_y - gH\frac{\partial(\zeta + P_s/g\rho_0-\alpha\eta)}{\partial x}\nonumber\\
	&&+ \frac{\tau_{sx}}{\rho_0} - \frac{\tau_{bx}}{\rho_0} + M_x - D_x - B_x + \tau_0Q_x\bigg) + \frac{\partial}{\partial y}\bigg(-\frac{\partial UQ_y}{\partial x} - \frac{\partial VQ_y}{\partial x} + fQ_x\nonumber\\
	&&- gH\frac{\partial(\zeta + P_s/g\rho_0-\alpha\eta)}{\partial y}	+ \frac{\tau_{sy}}{\rho_0} - \frac{\tau_{by}}{\rho_0} + M_y - D_y - B_y + \tau_0Q_y\bigg)\nonumber\\
	&&- UH\frac{\partial\tau_0}{\partial x} - VH\frac{\partial\tau_0}{\partial y} = 0.\label{Eq:GWCE}
\end{eqnarray}

A driving application of the ADCIRC model is predicting maximum storm surge elevations during extreme weather events such as hurricanes.  
Accurate estimation of storm surge elevations requires accurate estimation of model parameters.  
The Manning's n coefficients of roughness are of particular importance.  
They enter into the SWEs through the bottom stress components $\tau_{bx_i}$ in the momentum equations.  
The Manning's n coefficient is a highly variable spatial parameter dependent on the surface characteristics of the seabed and is inherently uncertain.

For the purposes of this work, the derivation of how the Manning's n coefficient enters into the momentum equation is not particularly important.  
In the end, as we change the Manning's n coefficient we see changes in the maximum storm surge elevations in the physical domain.

\begin{table}
\centering
\begin{tabular}{ccc}
    Parameter & Range of Manning's n values & Land Classification  \\ \hline \hline

    $\lambda_1$  & $[0.0396, 0.21]$  & Low-intensity developed \\

	$\lambda_2$ & $[0.0594, 0.315]$ & Evergreen forest \\

	$\lambda_3$ & $[0.0495, 0.2625]$ & Palustrine forested wetland \\

	$\lambda_4$ & $[0.00825, 0.04375]$ & Open water \\
\end{tabular}
\captionof{table}{\it Manning’s n coeffcient ranges for the subdomain.}
\label{Table:Mannings}
\end{table}

\subsubsection{The Inverse Problem}\label{Sec:adcirc}

We consider a subdomain of the Golf Coast that has four dominant land classifications and therefore four dominant Manning's n coefficients that enter into the bottom stress components in the momentum equations, see Table~\ref{Table:Mannings}.  
We run ADCIRC to simulate the storm surge under the forcing of Hurricane Gustav and record maximum storm surge elevations at 194 spatial locations, see Figure~\ref{Fig:subdomain}.  
Our goal is to determine which set of four spatial locations provides us with data that produces the best solution to the inverse problem.

\begin{figure}
    \begin{center}
        \includegraphics[width=.45\textwidth]{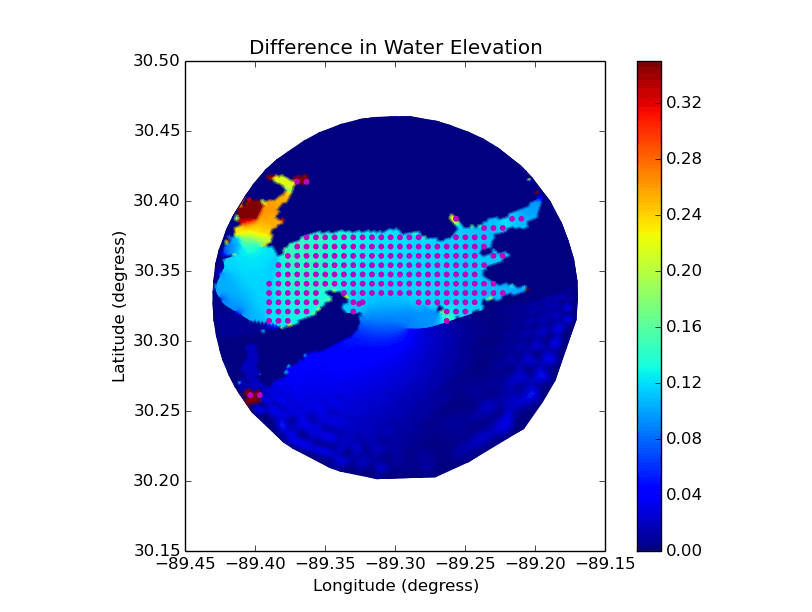}\\
        		\includegraphics[width=.45\textwidth]{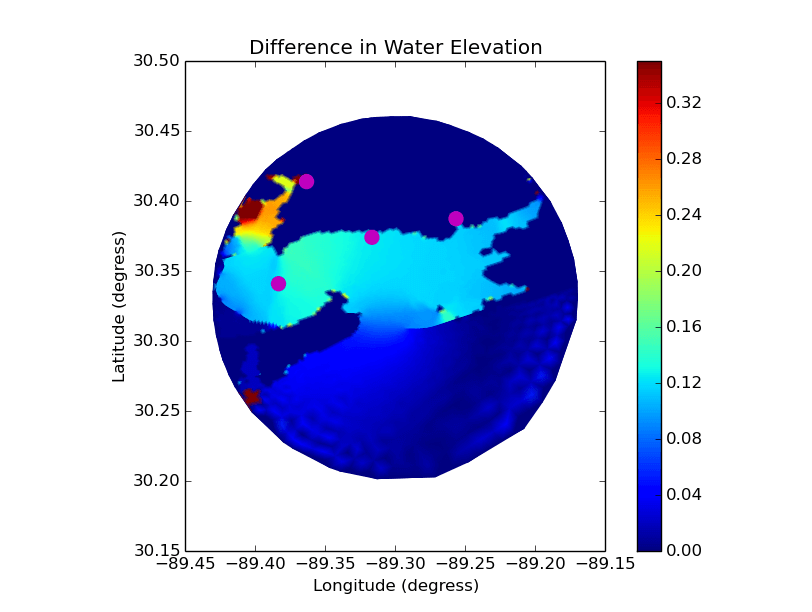}
        			\includegraphics[width=.45\textwidth]{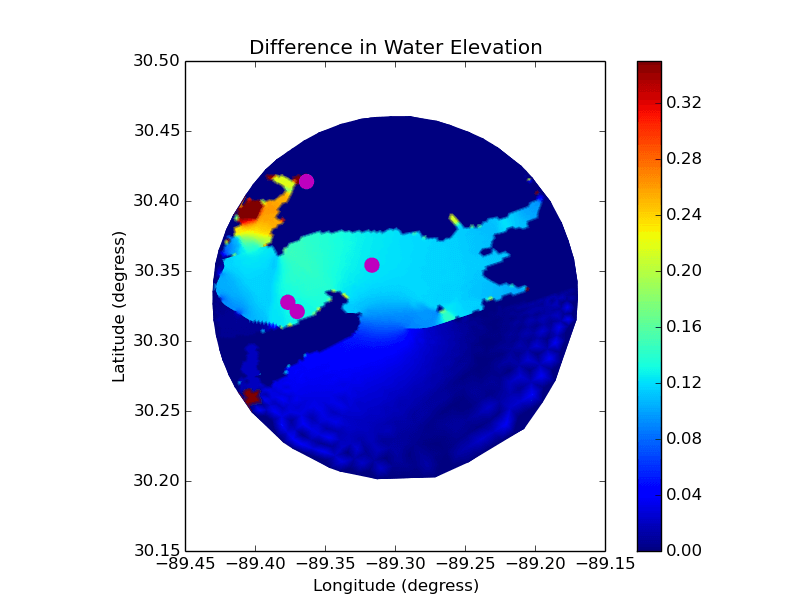}
        \caption{\it (top): 194 potential station locations.  (bottom left): Stations [11, 40, 160, 91] define the optimal observation network.  (bottom right): Stations [16, 40, 22, 88] define a suboptimal observation network.}\label{Fig:subdomain}
    \end{center}
\end{figure}

\begin{figure}
    \begin{center}
        \includegraphics[width=.24\textwidth]{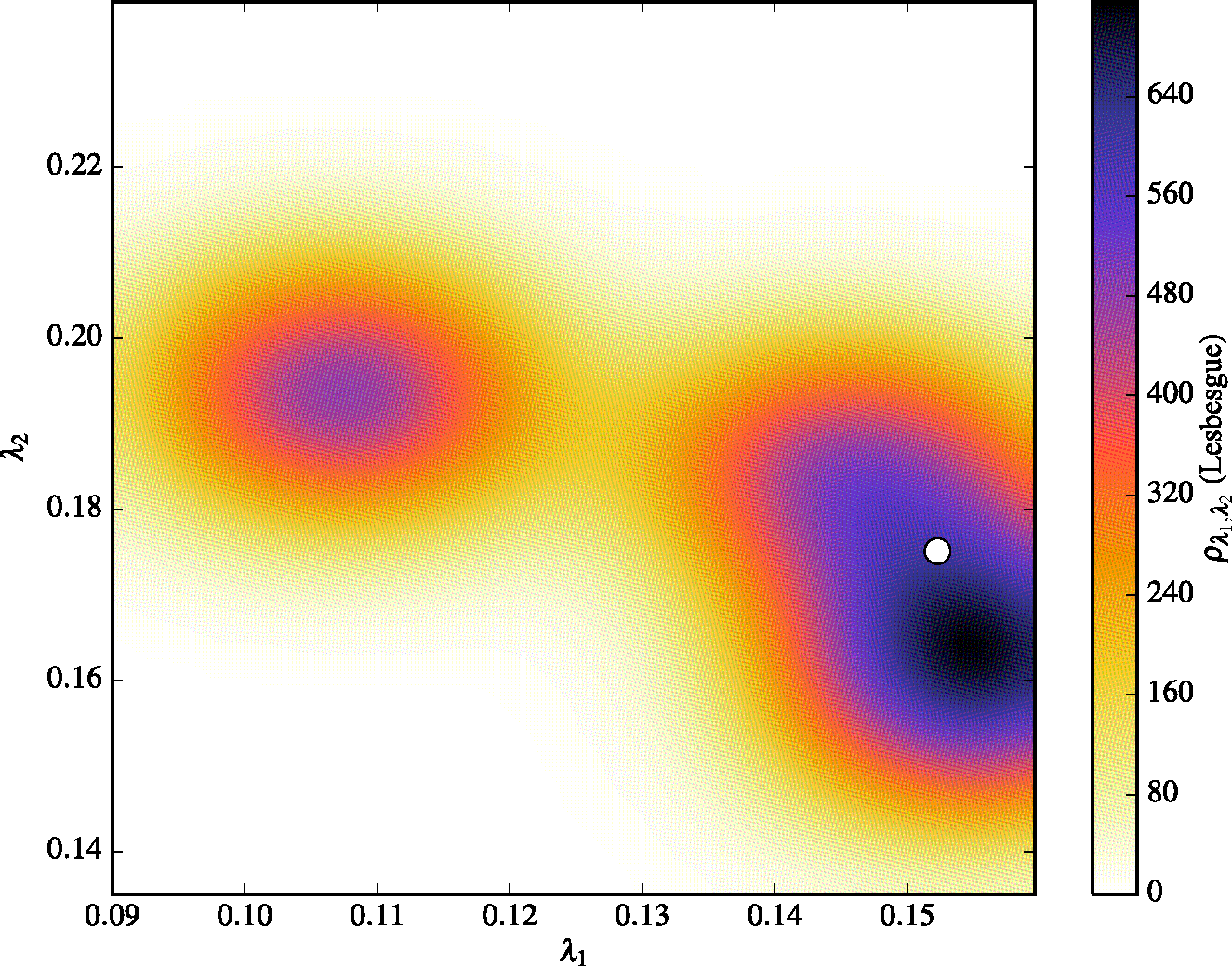}
        		\includegraphics[width=.24\textwidth]{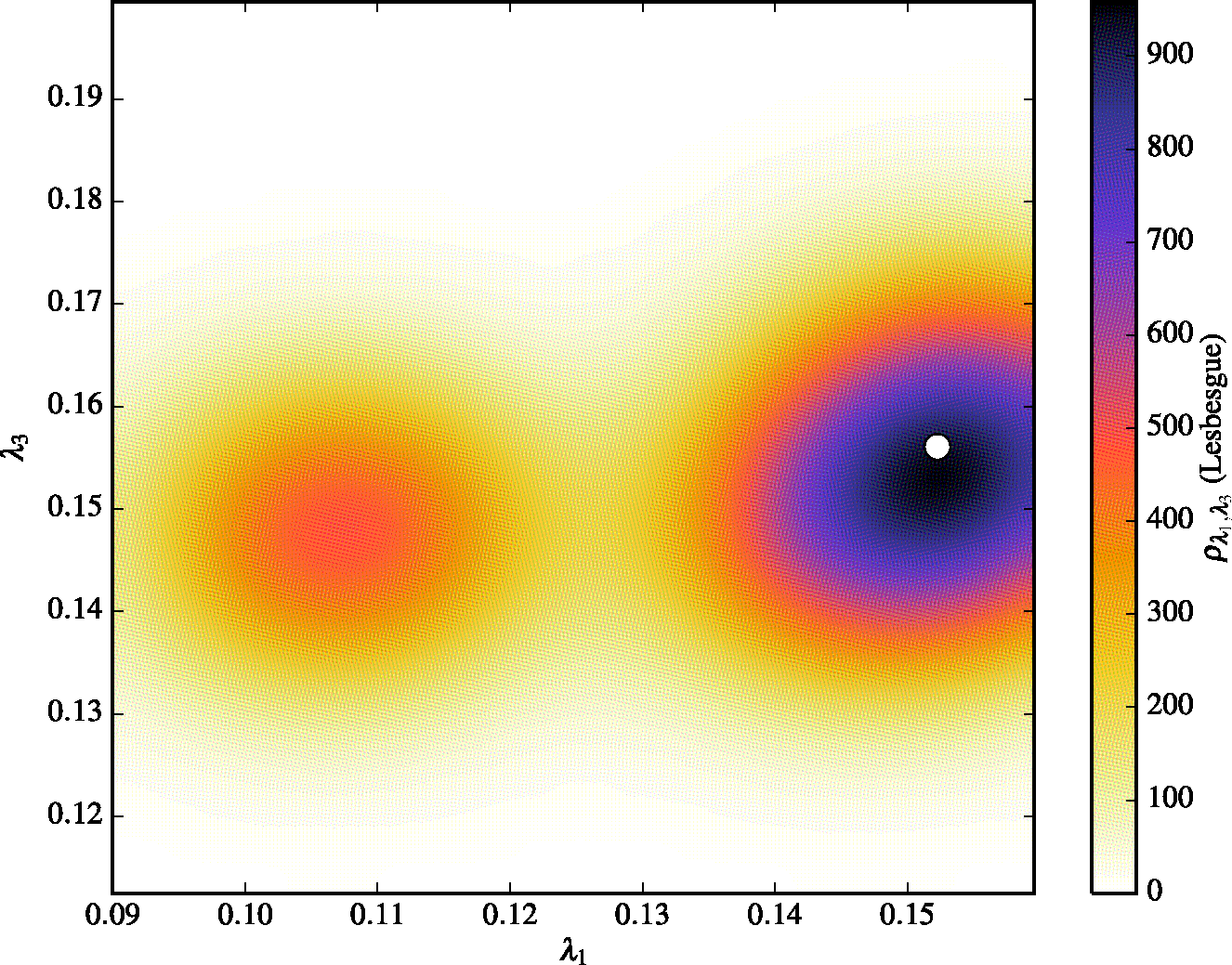}
        			\includegraphics[width=.24\textwidth]{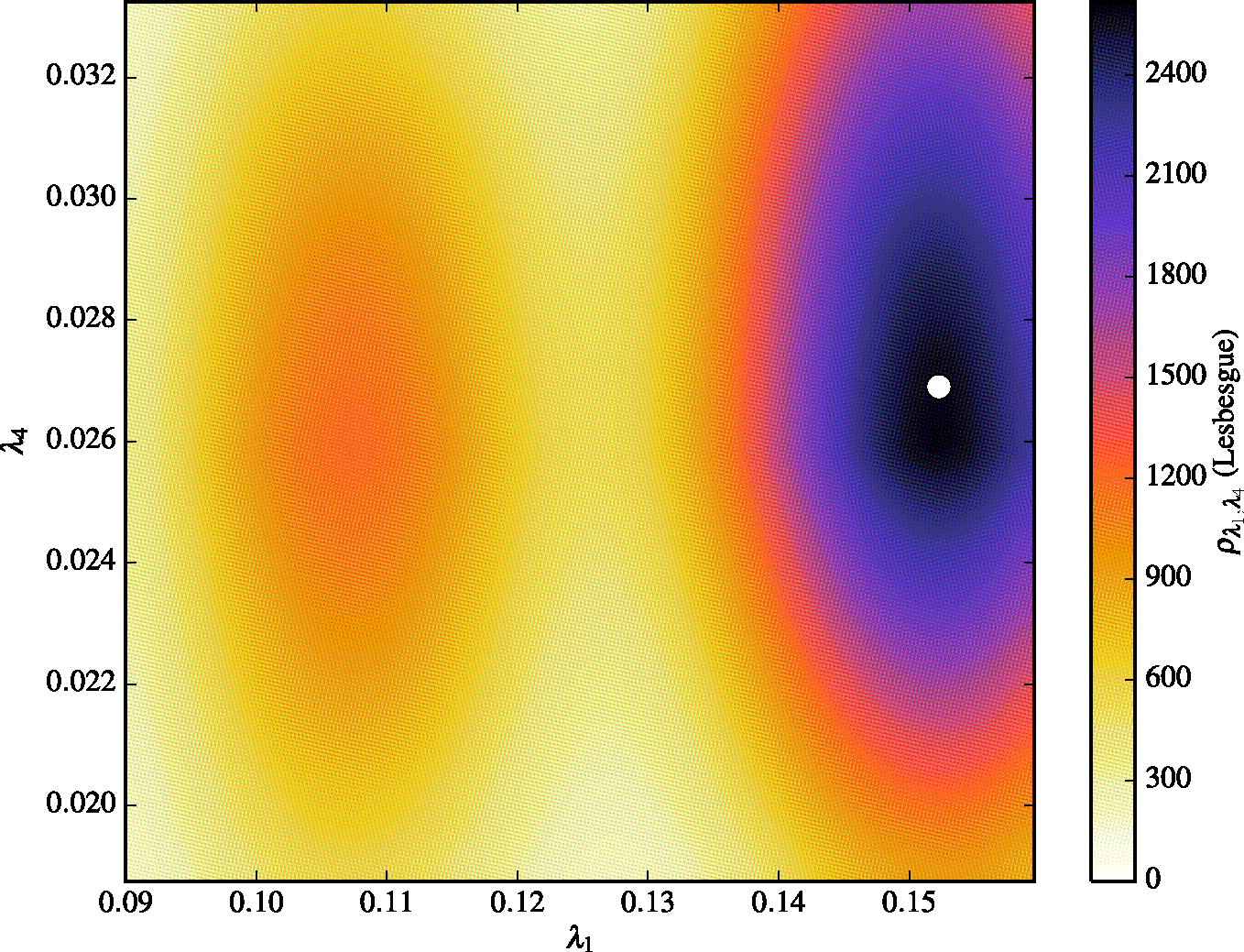} \\
		\includegraphics[width=.24\textwidth]{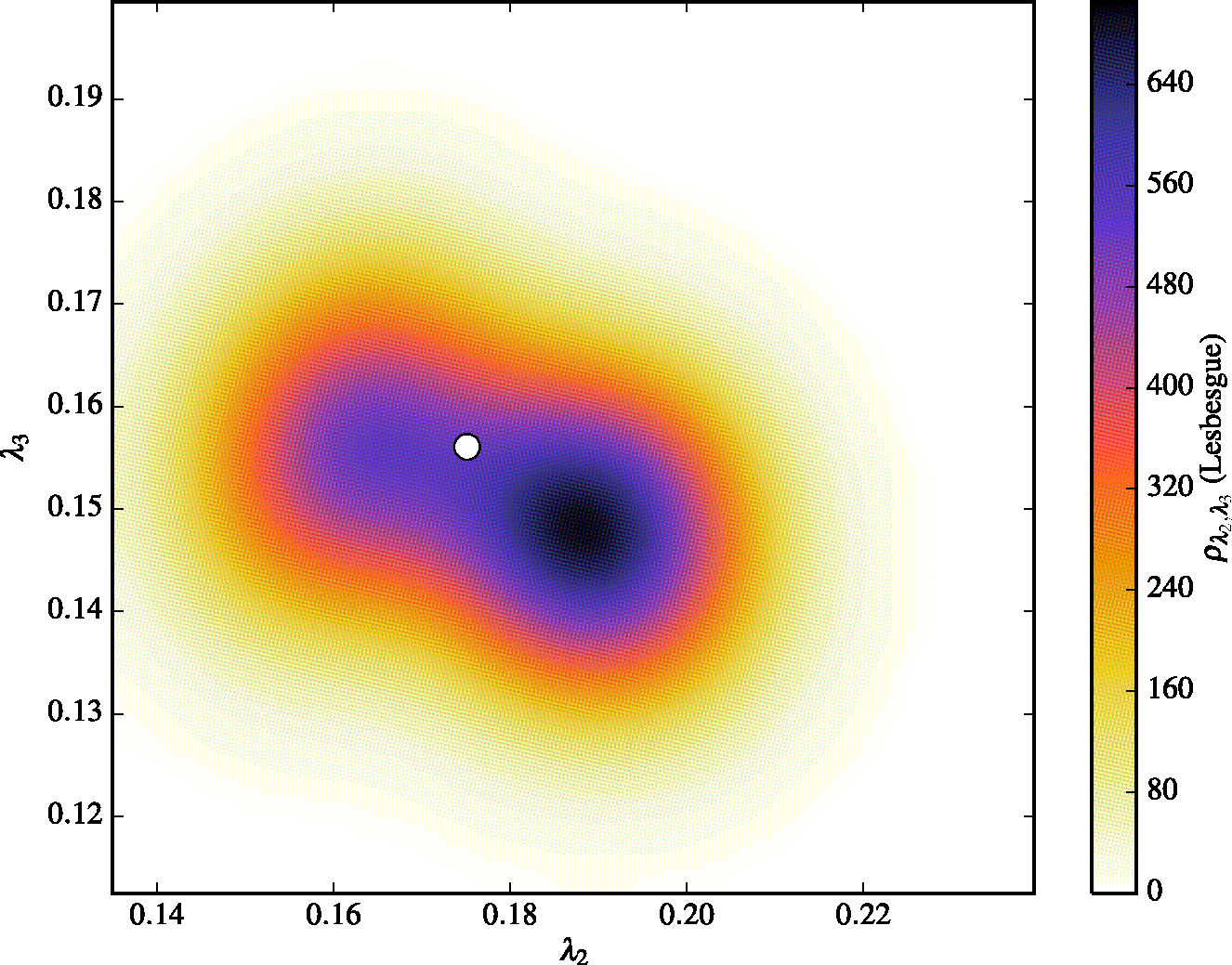}
        		\includegraphics[width=.24\textwidth]{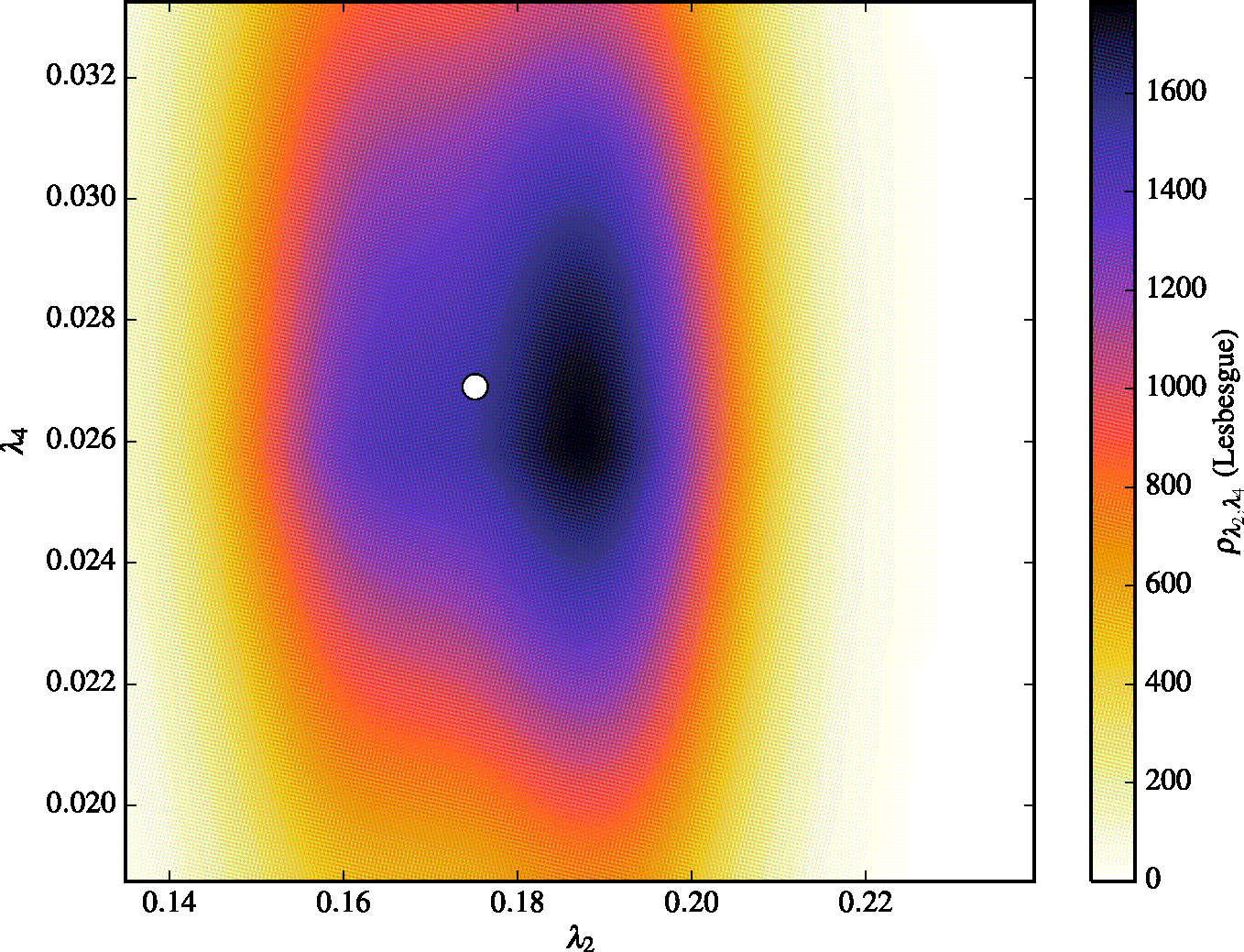}
				\includegraphics[width=.24\textwidth]{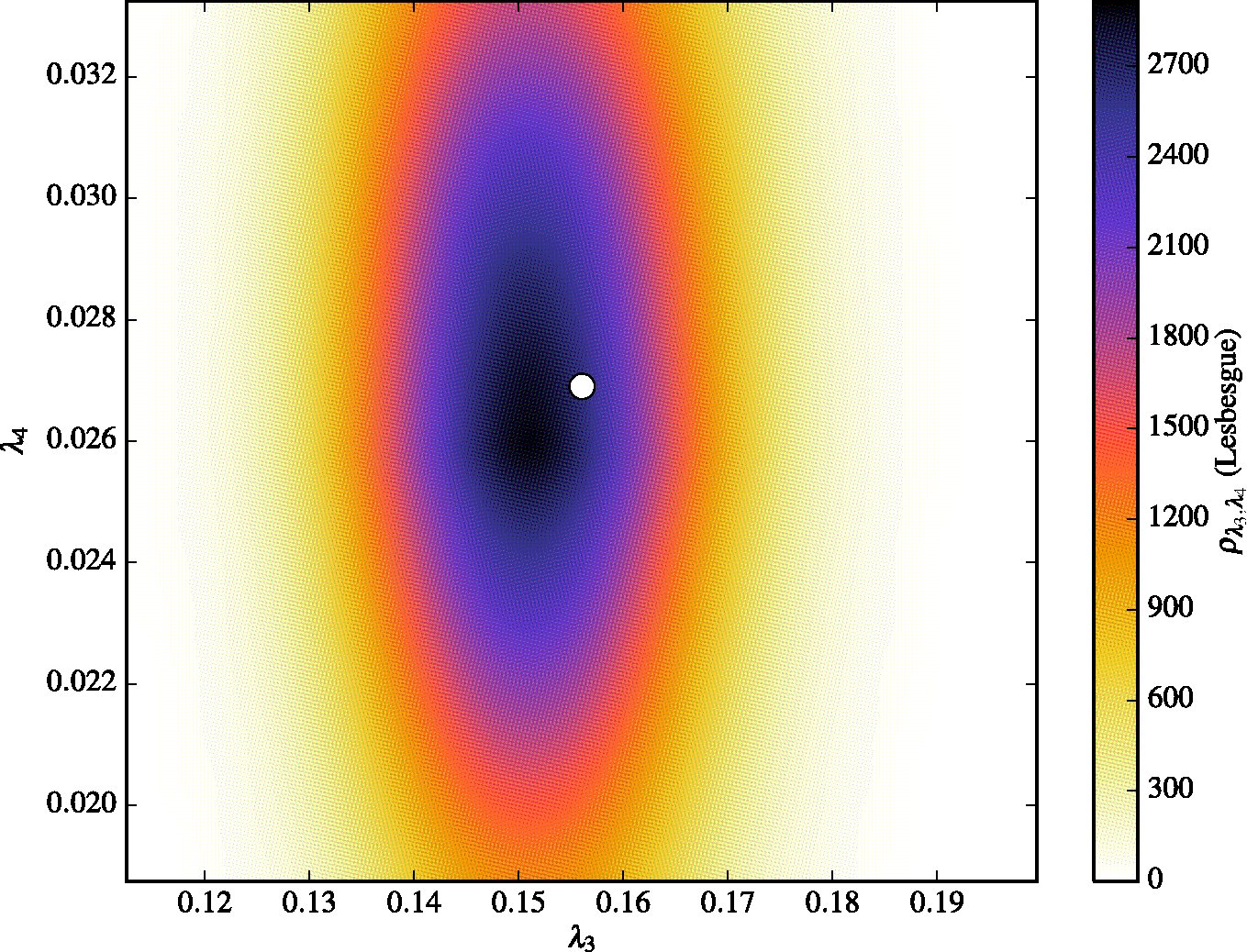}
        \caption{\it Plots of the marginals of the inverse solution using uniform samples for optimal stations [11, 40, 160, 191]. Here, $\rho_{\mathcal{D}}$ is defined as a uniform density on a small rectangular box centered at the reference QoI values associated with $\lambda_{ref} = (0.1523, 0.1751, 0.1561, 0.0269)$. The reference value is illustrated by a white circle. (left): In order from top to bottom $(\lambda_1, \lambda_2),(\lambda_1, \lambda_4),(\lambda_2, \lambda_4)$. (right): In order from top to bottom $(\lambda_1, \lambda_3),(\lambda_2, \lambda_3),(\lambda_3, \lambda_4)$.}\label{Fig:adcirc_optimal}
        \textbf{}\par\medskip
        \includegraphics[width=.24\textwidth]{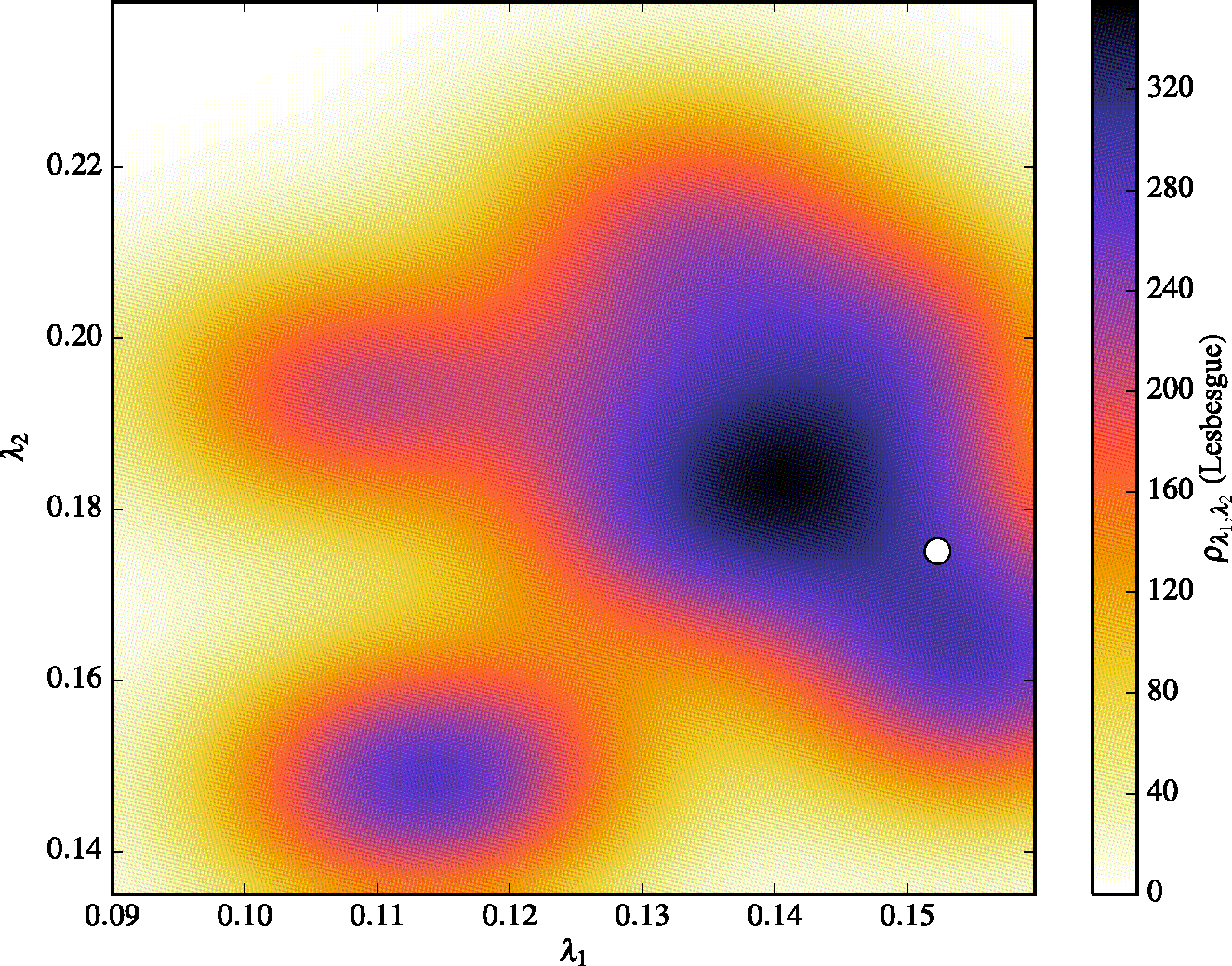}
        		\includegraphics[width=.24\textwidth]{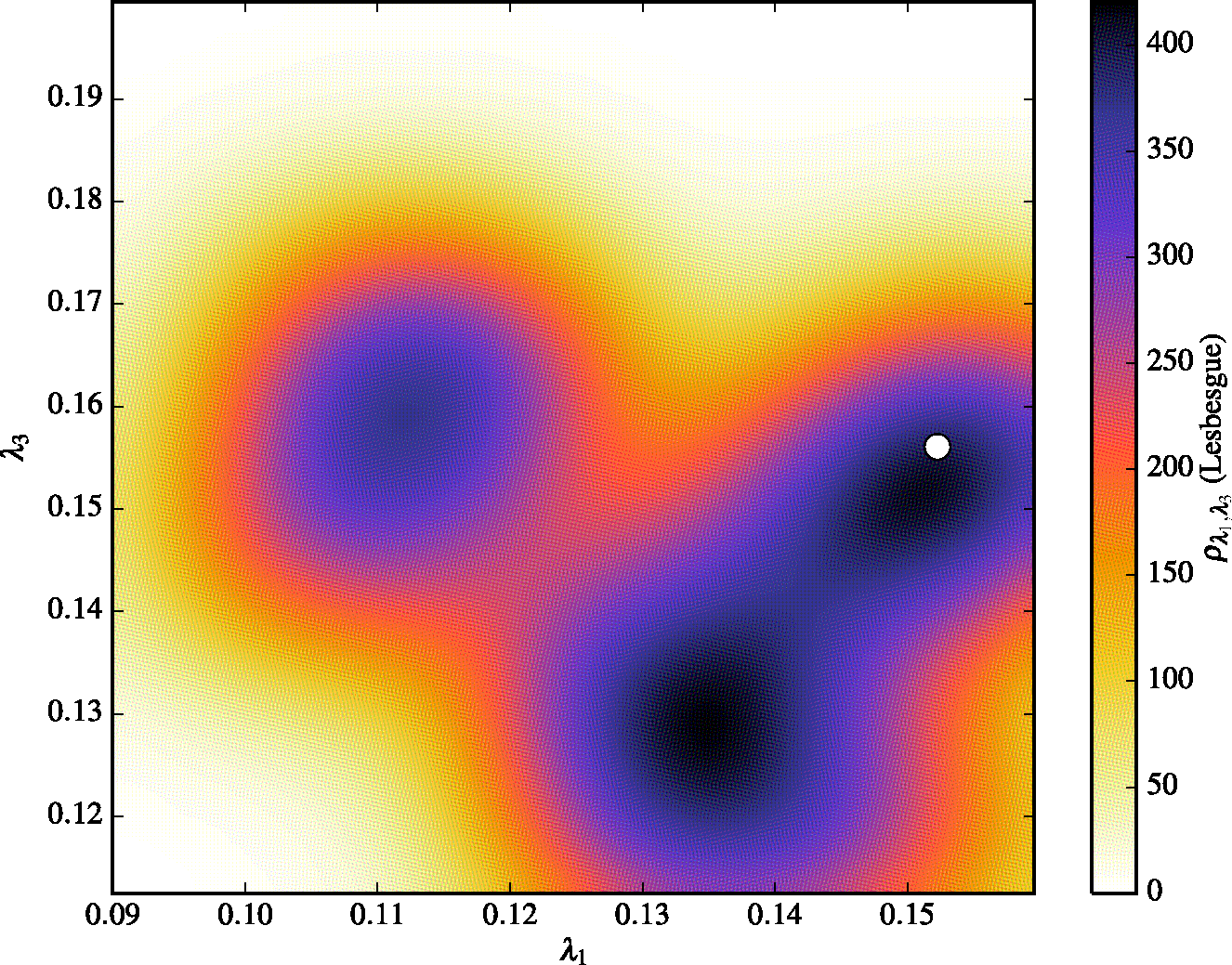}
        			\includegraphics[width=.24\textwidth]{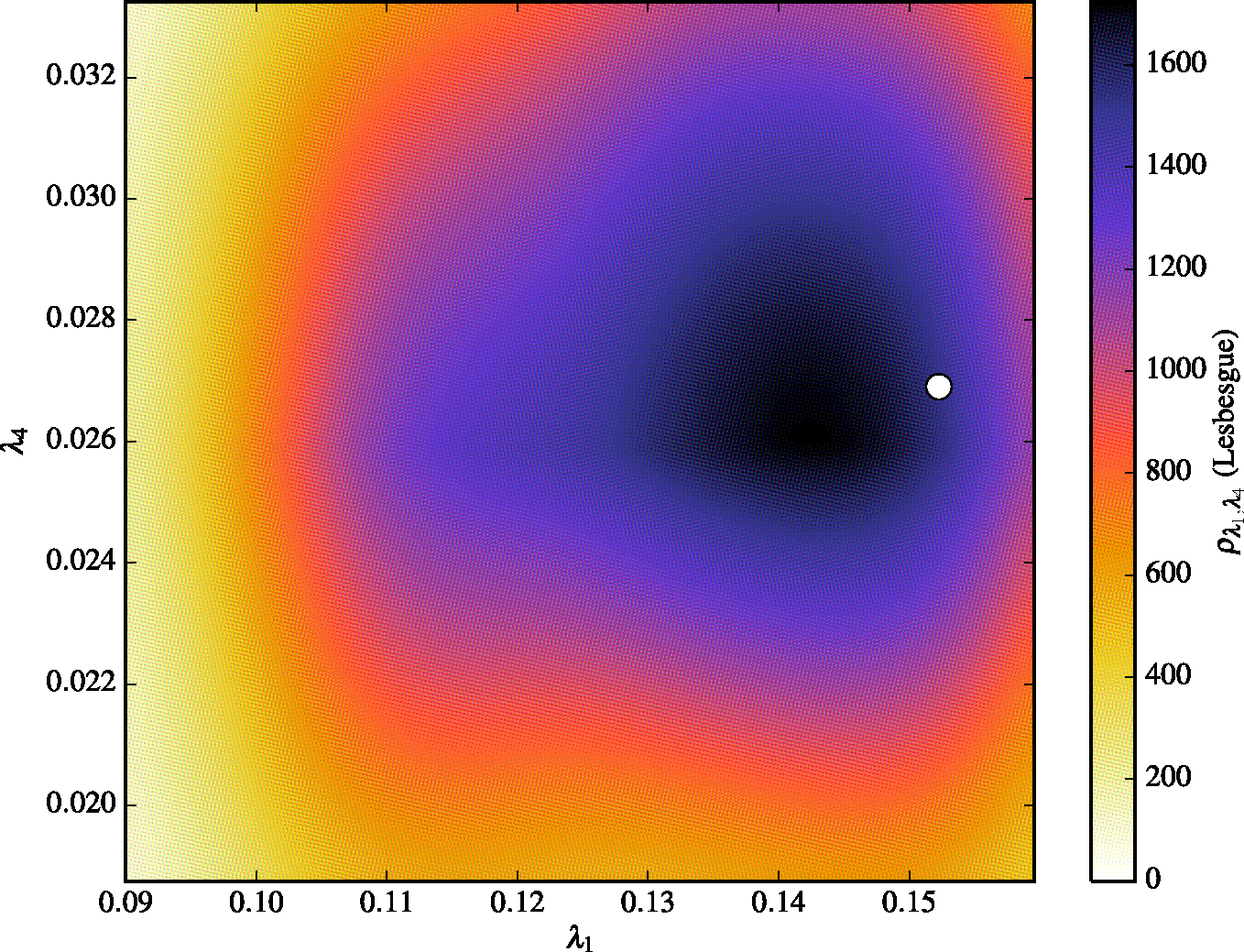} \\
		\includegraphics[width=.24\textwidth]{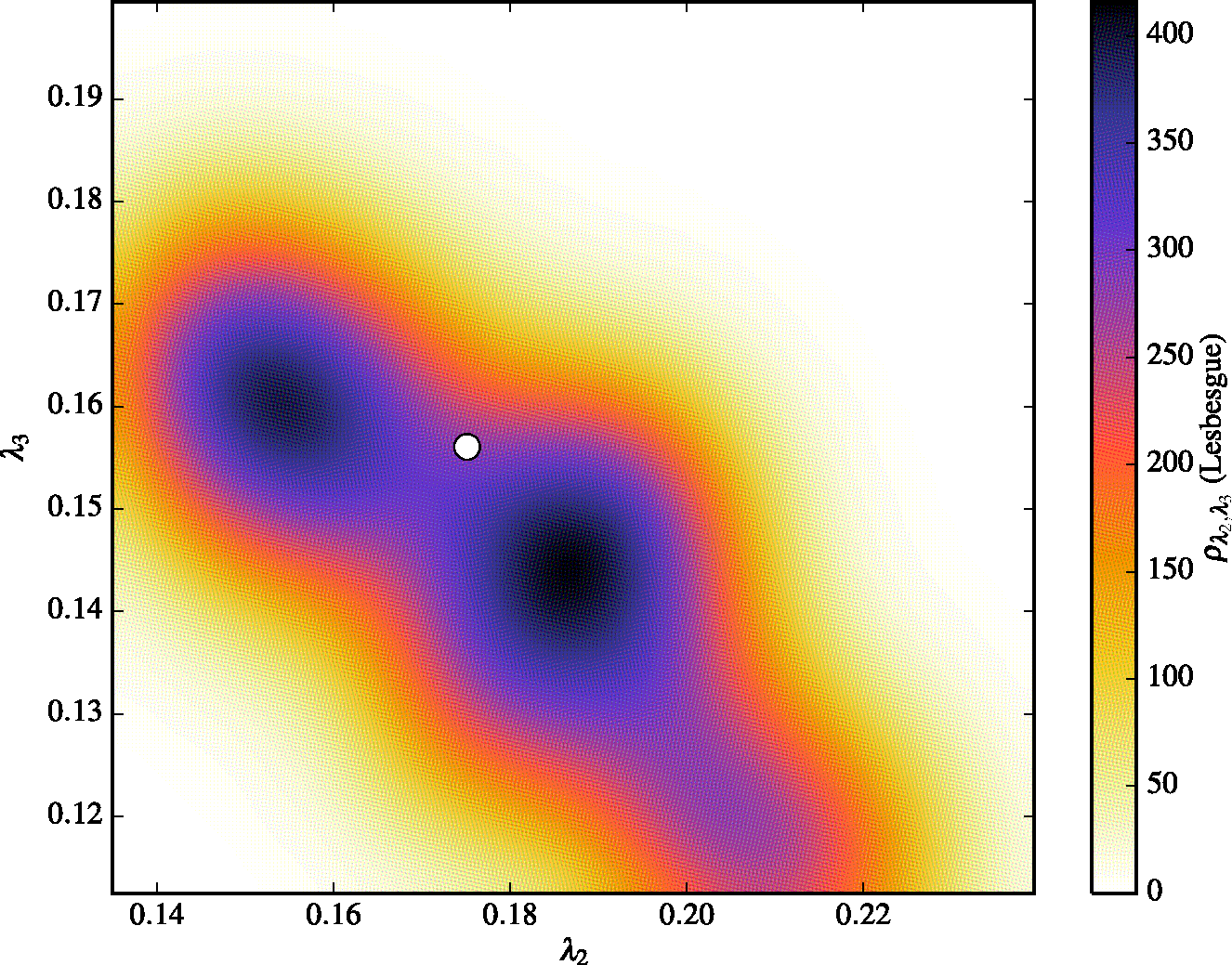}
        		\includegraphics[width=.24\textwidth]{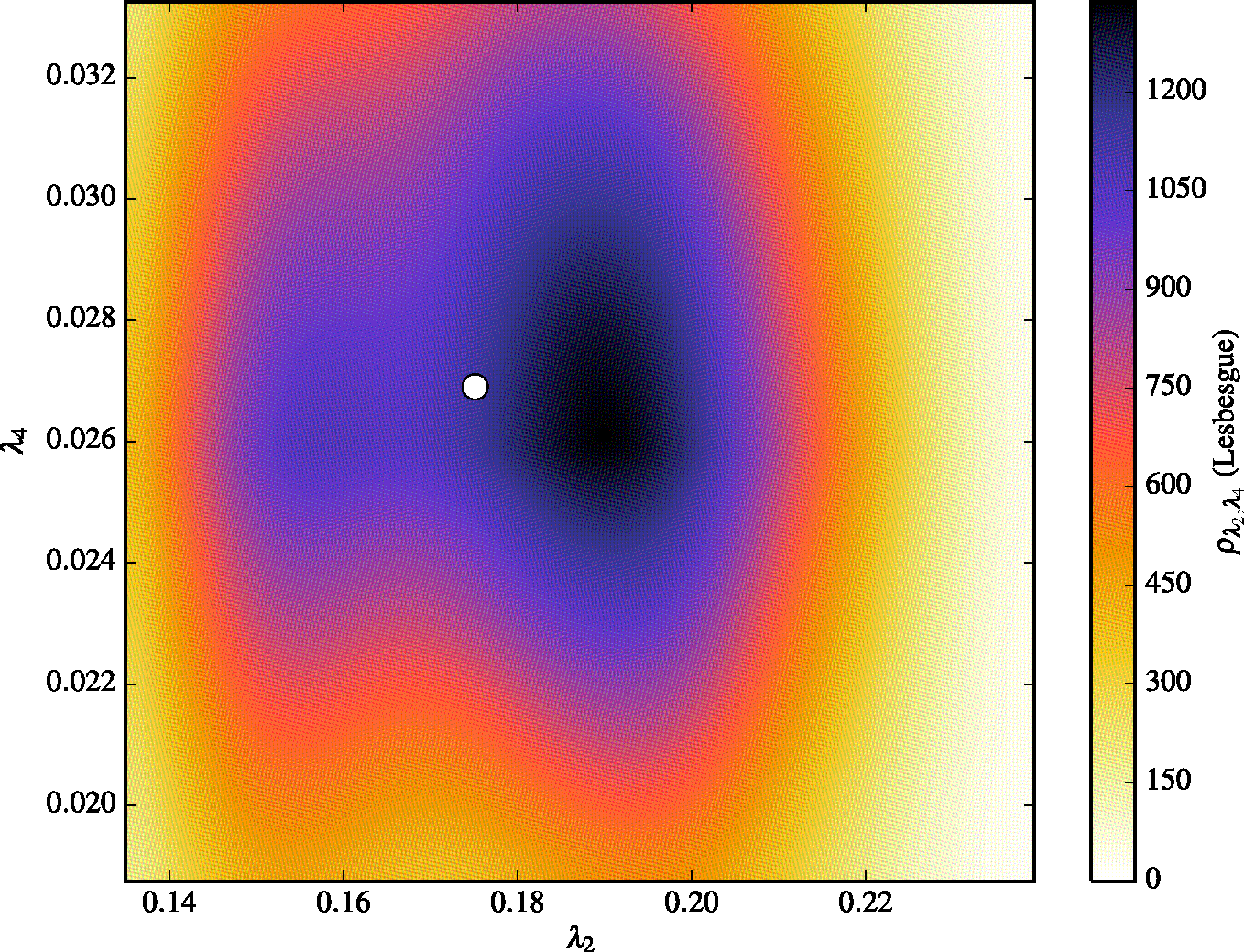}
				\includegraphics[width=.24\textwidth]{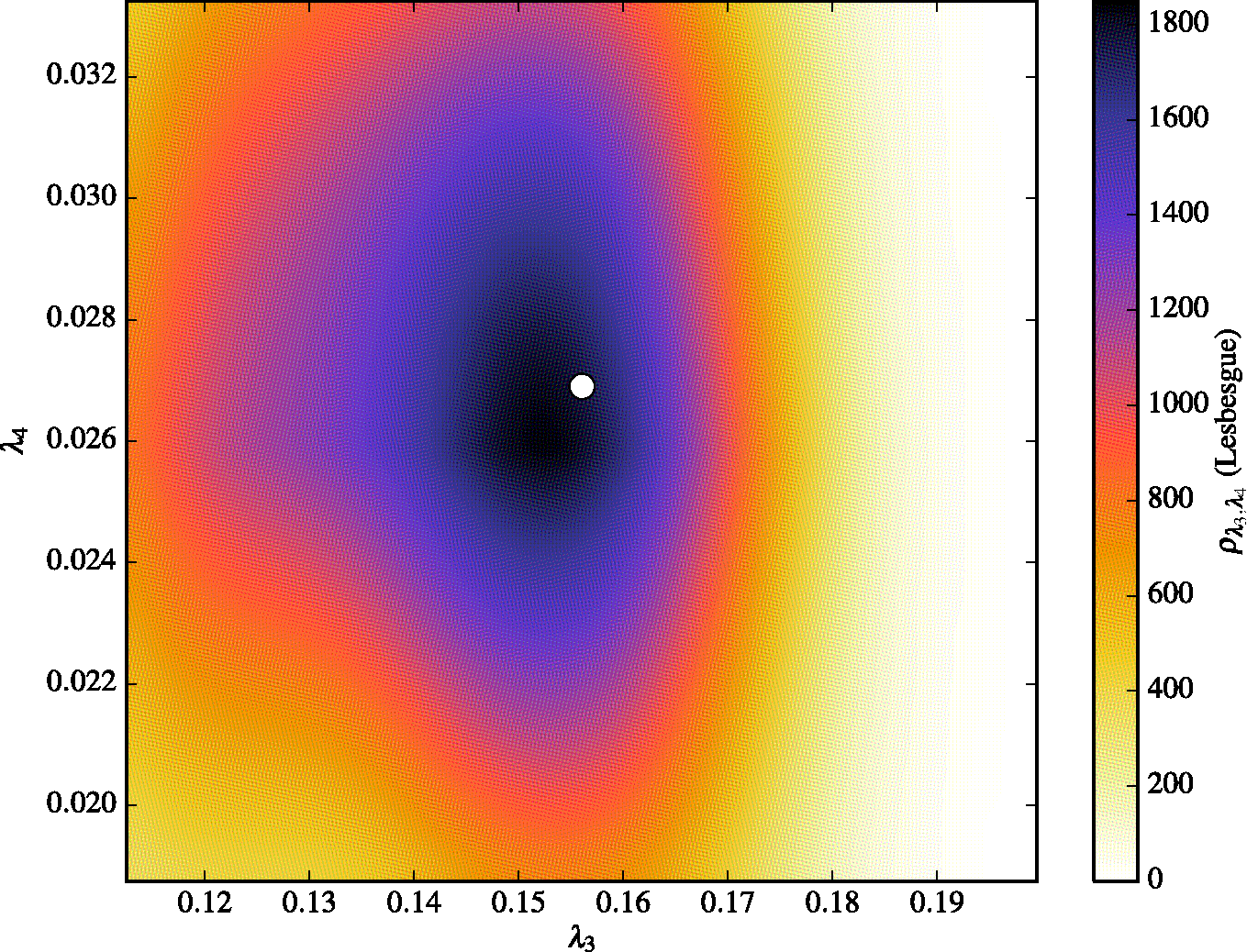}
        \caption{\it Plots of the marginals of the inverse solution using uniform samples for suboptimal stations [16, 40, 22, 88]. Here, $\rho_{\mathcal{D}}$ is defined as a uniform density on a small rectangular box centered at the reference QoI values associated with $\lambda_{ref} = (0.1523, 0.1751, 0.1561, 0.0269)$. The reference value is illustrated by a white circle. (left): In order from top to bottom $(\lambda_1, \lambda_2),(\lambda_1, \lambda_4),(\lambda_2, \lambda_4)$. (right): In order from top to bottom $(\lambda_1, \lambda_3),(\lambda_2, \lambda_3),(\lambda_3, \lambda_4)$.}\label{Fig:adcirc_bad}
    \end{center}
\end{figure}

\begin{remark}
	The results from this section are from Lindley Graham's PhD Thesis \cite{Lindleysthesis} and were computed during the summer of 2015.  The algorithm used to determine the optimal set of QoI has since developed into the methods described in Sections~\ref{Sec:Precision_Accuracy} and \ref{Sec:Optimizing}.  Although these results were found using a less tuned algorithm, they still display fundamental characteristics of inverse solutions defined by optimal and suboptimal sets of QoI.
\end{remark}

In Figure~\ref{Fig:subdomain} we see three plots of the subdomain.  The points in each plot represent particular stations of interest, the smooth coloring of the domain represents the difference in the computationally approximated maximum storm surge elevation over 480 uniform samples in $\Lambda$.  Notice in the top image we do not place potential station locations in regions of the subdomain that do not display significant sensitivity in storm surge elevations as the Manning's n coefficients (parameters) vary.

In Figure~\ref{Fig:adcirc_optimal} we see the marginal densities computed from the inverse solution for the optimal set of QoI seen in the bottom left of Figure~\ref{Fig:subdomain}.  
In Figure~\ref{Fig:adcirc_bad} we see the marginal densities computed from the inverse solution for the suboptimal set of QoIs seen in the bottom right of Figure~\ref{Fig:subdomain}.  
To clearly draw distinctions between these separate solutions, we examine the top left plots in Figures~\ref{Fig:adcirc_optimal} and \ref{Fig:adcirc_bad}.  Note that although both marginal densities contain the reference point $\lambda_{ref}$ within their supports, the solution from the optimal QoI does a much better job of concentrating high probability in a smaller region of $\Lambda$ containing this reference point.  
The relative measure of the support of the density computed for the optimal set of stations is $\mu_\Lambda((Q^{(opt)})^{-1}(B)) / \mu_\Lambda(\Lambda)=7.904E-3$, where as for the suboptimal set $\mu_\Lambda((Q^{(subopt)})^{-1}(B)) / \mu_\Lambda(\Lambda)=1.917E-2$.  Using the optimal set of stations has reduced the relative measure of the support of the inverse density by approximately a factor of 3.

\section{\bf Conclusion}\label{Sec:Conclusion}

In \cite{Butler2015b} the local skewness of the inverse image of a generalized rectangle in some $\mathcal{D}$ was defined.  
We have extended this initial work to quantify the global effect of skewness which is a way of quantifying the accuracy we can obtain in solution to the stochastic inverse problem from a finite number of samples.  
We also developed a way to quantify the precision in solutions to stochastic inverse problems in terms of measures of supports of densities and/or high probability events.  
A multicriteria optimization problem was defined to determine the best QoI map from the space of theoretically possible QoI that simultaneously reduces both the skewness and measure of the support of the inverse density.  
Several numerical examples demonstrated the effect of the QoI choice on the solution to the stochastic inverse problem. 

\vskip 10pt
{\it Future Work}
\vskip 10pt

As mentioned in a previous remark, the choice of weights used in Eq.~\eqref{Eq:dist_suppskew} is a current topic of discussion.  
It is clear as more computational resources are available we desire to reduce the $\mu_\Lambda$-measure of the inverse image more than the skewness, i.e., we set $\omega\ll 1$. 
A fundamental problem is to determine if the value of $\omega$ can be solved for directly as a function of the number of model solves available.

In this work we considered the discrete optimization problem, i.e., given $m$ sensors and $d$ theoretical QoI to choose from, determine the optimal set to improve solutions to the corresponding stochastic inverse problem.  
The continuous analogue is simply choose $m$ optimal QoI from infinitely many possibilities.  
As a concrete example, suppose that for the PDE in Section~\ref{Sec:heatplate} we can place two temperature sensors at {\em any} point in space-time, and we wish to choose the best two points in space-time to record data.
This problem is approached by considering the function $R : \mathcal{Q}\goto Y_\omega$, where $\mathcal{Q}$ is now the set of infinitely many possible sets of QoI, and finding local minima of this function.  
In the case of QoI being represented by some specific type of functional at a point in space time (temperature measurement), the space $\mathcal{Q}$ has dimension $(\dim(\Omega) + 1) \times m$ where $\Omega$ is the spatial domain of the model and $m$ is the number of QoI to be chosen.  Although the dimension space $\mathcal{Q}$ increases quickly as $m$ increase, there appears to be symmetry to exploit in this space.

In high dimensional parameter spaces any set implicitly defined by the solution to a stochastic inverse problem, skewed or not, is difficult to approximate with a reasonably low number of samples.  
In \cite{Adaptive} adaptive sampling algorithms are used to combat this curse of dimensionality.  With the ability to place samples near the boundary of sets in $\Lambda$ the error in the approximation of these sets is greatly reduced.  With the addition of gradient information the efficiency of these adaptive sampling algorithms is improved.  Preliminary results indicate the efficiency is improved further as the skewness of the set is reduced.  A thorough numerical exploration of the effects of skewness on the efficiency of adaptive sampling algorithms is an obvious path to explore.

In many driving applications the collection of field data is difficult, time consuming, and expensive.  Obtaining a good solution to the inverse problem while using as little data as possible is greatly desired.  In the examples in Section~\ref{Sec:Map_Defined} we look to determine the best set of $m$ QoI to use to solve the inverse problem, however, possibly a set of $m-1$ QoI produces a very similar inverse solution, or a set of $m-2$.  This information can greatly reduce the resources needed to gather the field data required.  With the $M_{Q^{(z)}}$ and $S_{Q^{(z)}}$ defined as in Section~\ref{Sec:Precision_Accuracy}, can we use $M_{Q^{(z),k}}$ and $S_{Q^{(z),k}}$ (the $\mu_\Lambda$-measure and skewness of the map $Q^{(z)}$ without the $k^{th}$ QoI) to draw conclusions about the maximum number of useful QoI to use from a given set?

\pagebreak
\bibliographystyle{plain}
\bibliography{masters_report.v4.arxiv}

\begin{thebibliography}{10}

\bibitem{AlnaesBlechta2015a}
Martin~S. Aln{\ae}s, Jan Blechta, Johan Hake, August Johansson, Benjamin
  Kehlet, Anders Logg, Chris Richardson, Johannes Ring, Marie~E. Rognes, and
  Garth~N. Wells.
\newblock The fenics project version 1.5.
\newblock {\em Archive of Numerical Software}, 3(100), 2015.

\bibitem{BE1}
J.~Breidt, T.~Butler, and D.~Estep.
\newblock A measure-theoretic computational method for inverse sensitivity
  problems {I}: Method and analysis.
\newblock {\em SIAM Journal on Numerical Analysis}, 49:1836--1859, 2011.

\bibitem{Butler2012b}
T.~Butler, M.U. Altaf, C.~Dawson, I.~Hoteit, X.~Luo, and T.~Mayo.
\newblock Data assimilation within the advanced circulation {(ADCIRC)} modeling
  frameowork for hurricane storm surge forecasting.
\newblock {\em Monthly Weather Review}, 140:2215--2231, 2012.

\bibitem{BE2}
T.~Butler, D.~Estep, and J.~Sandelin.
\newblock A computational measure-theoretic approach to inverse sensitivity
  problems {II}: A posteriori error analysis.
\newblock {\em SIAM Journal on Numerical Analysis}, 50:22--45, 2012.

\bibitem{BE3}
T.~Butler, D.~Estep, S.~Tavener, C.~Dawson, and J.J. Westerink.
\newblock A measure-theoretic computational method for inverse sensitivity
  problems {III}: Multiple output quantities of interest.
\newblock {\em SIAM/ASA Journal on Uncertainty Quantification}, 2:174--202,
  2014.

\bibitem{BET}
T.~Butler, D.~Estep, S.~Tavener, T.~Wildey, C.~Dawson, and L.~Graham.
\newblock Solving stochastic inverse problems using sigma-algebras on contour
  maps.
\newblock 2015.
\newblock {arXiv} preprint {arXiv:1407.3851}.

\bibitem{Butler2015b}
T.~Butler, L.~Graham, D.~Estep, C.~Dawson, and J.J. Westerink.
\newblock Definition and solution of a stochastic inverse problem for the
  manning's n parameter field in hydrodynamic models.
\newblock {\em Advances in Water Resources}, 78:60--79, 2015.

\bibitem{Adaptive}
T.~Butler, L.~Graham, S.~Mattis, and S.~Walsh.
\newblock A measure-theoretic interpretation of sample based numerical
  integration with applications to inverse and prediction problems under
  uncertainty.
\newblock In preparation.

\bibitem{Dellacherie_Meyer}
C.~Dellacherie and P.A. Meyer.
\newblock {\em Probabilities and Potential}.
\newblock North-Holland Publishing Co., Amsterdam, 1978.

\bibitem{Lindleysthesis}
L.~Graham.
\newblock {\em Adpative Measure-Theoretic Parameter Estimation for Coastal
  Ocean Modeling}.
\newblock PhD thesis, Institute for Computational Engineering ans Sciences, The
  University of Texas at Austin, Austin, TX 78712, 2015.

\end{thebibliography}

\end{document}